\documentclass[11pt]{article}

\usepackage[dvips]{epsfig}
\usepackage{graphicx}
\usepackage{amssymb,graphics,amsmath,amsthm,amsopn,amstext,amsfonts}

\setlength{\textheight}{24.0cm} \setlength{\topmargin}{-1.0cm}
\setlength{\textwidth}{17.5cm} \setlength{\oddsidemargin}{-0.5cm}
\setlength{\evensidemargin}{-0.5cm}
%





\newcommand{\gap}{\vspace{0.1in}}

\newcommand{\wt}{\widetilde}

\newcommand{\wh}{\widehat}

\newcommand{\ol}{\overline}

\newcommand{\diam}{\diamond}
\newcommand{\Ical}{\mathcal I}
\newcommand{\Jcal}{\mathcal J}
\newcommand{\sgn}{{\rm sgn}}
\newcommand{\diag}{{\rm diag}}

\newtheorem{theorem}{Theorem}[section] 
\newtheorem{lemma}{Lemma}[section] 
\newtheorem{corollary}{Corollary}[section] 
\newtheorem{proposition}{Proposition}[section] 
\newtheorem{remark}{Remark}[section]


\begin{document}

\title{Least Sparsity of  $p$-norm based Optimization Problems with $p>1$}
%
%

\author{Jinglai Shen\footnote{Department of Mathematics and Statistics, University of Maryland Baltimore County, Baltimore, MD 21250, U.S.A. Emails: shenj@umbc.edu and smousav1@umbc.edu.} \ \ \ and \ \ \ Seyedahmad Mousavi  }

\maketitle

\begin{abstract}
Motivated by $\ell_p$-optimization arising from sparse optimization, high dimensional data analytics and statistics, this paper studies sparse properties of a wide range of  $p$-norm based  optimization problems with $p>1$, including generalized basis pursuit, basis pursuit denoising, ridge regression, and elastic net. It is well known that when $p>1$, these optimization problems lead to less sparse solutions. However, the quantitative characterization of the adverse sparse properties is not available. In this paper, by exploiting optimization and matrix analysis techniques, we give a systematic treatment of a broad class of $p$-norm based optimization problems for a general $p>1$ and show that optimal solutions to these problems attain full support, and thus have the least sparsity, for almost all measurement matrices and measurement vectors. Comparison to $\ell_p$-optimization with $0<p\le 1$ and implications to robustness are also given. These results  shed light on analysis and computation of general $p$-norm based optimization problems in various applications.
\end{abstract}

%
\section{Introduction}

Sparse optimization arises from various important applications of contemporary interest, e.g., compressed sensing, high dimensional data analytics and statistics, machine learning, and signal and image processing  \cite{CandesW_ISP08, FoucartRauhut_book2013, HastieTF_book09, RTibshirani_JRSS96}. The goal of sparse optimization is to recover the sparsest vector from observed data which are possibly subject to noise or errors, and it can be formulated as the $\ell_0$-optimization problem \cite{BeckE_SIOPT13, BKanzowS_SIOP16, FengMPSW_Techreport13}. Since the $\ell_0$-optimization problem is NP-hard,  it is a folklore in sparse optimization to use the $p$-norm or $p$-quasi-norm $\| \cdot \|_p$ with $p\in (0, 1]$ to approximate the $\ell_0$-norm to recover sparse signals \cite{Donoho_TIT06, Donoho_CPAM06}. Representative optimization problems involving such the $p$-norm include basis pursuit, basis pursuit denoising, LASSO, and elastic net; see Section~\ref{sect:formulation} for the details of these problems.
In particular, when $p=1$, it gives rise to a convex $\ell_1$-optimization problem which attains efficient numerical algorithms \cite{FoucartRauhut_book2013, WrightNF_TSP09}; when $0<p <1$, it yields a non-convex and non-Lipschitz optimization problem whose local optimal solutions can be effectively computed  \cite{ChenFY_SSC10, FourcartLai_ACHA09, GeJYe_MP11, LaiWang_SIOPT11}. 

When $p>1$, it is well known that the $p$-norm formulation will not lead to sparse solutions. However, to the best of our knowledge, a formal justification of this fact for a general setting with an arbitrary $p>1$ is not available, except an intuitive and straightforward geometric interpretation for special cases, e.g., basis pursuit. Besides, when different norms are used in objective functions of optimization problems, e.g., the ridge regression and elastic net, it is difficult to obtain a simple geometric interpretation.
Moreover,  for an arbitrary $p>1$, there lacks a {\em quantitative} characterization of how less sparse such solutions are and how these less sparse solutions depend on a measurement matrix  and a measurement vector, in comparison with the related problems for $0<p\le 1$.
In addition to theoretical interest,  these questions are also of practical values, since the $p$-norm based optimization with $p>1$ and its matrix norm extensions find applications in graph optimization \cite{ElChamie_ACC14}, machine learning, and image processing \cite{Lefkimiatis_TIP13}. It is also related to the $\ell_p$-programming with $p>1$ coined by  Terlaky \cite{Terlaky_EJOR85}.
Motivated by the aforementioned questions and their implications in applications,  
we give a formal argument for a broad class of  $p$-norm based optimization problems with $p>1$ generalized from sparse optimization and other fields. When $p>1$, our results  show that while these problems are smooth optimization problems, they not only fail to achieve sparse solutions but also yield the {\em least} sparse solutions generically. Specifically, when $p>1$, for almost all measurement matrices $A \in \mathbb R^{m\times N}$ and measurement vectors $y \in \mathbb R^{m}$, solutions to these $p$-norm based optimization problems have full support, i.e., the support size is $N$; see Theorems~\ref{thm:BP_p}, \ref{thm:RR_p}, \ref{thm:EN_p}, \ref{thm:BPDn_p}, and \ref{thm:BPDn02_p} for formal statements. The proofs for these results turn out to be nontrivial, since except $p=2$, the optimality conditions of these optimization problems yield highly nonlinear equations and there are no closed form expressions of optimal solutions in terms of $A$ and $y$. To overcome these technical difficulties, we exploit techniques from optimization and matrix analysis and give a systematic treatment to a broad class of $p$-norm based optimization problems originally from sparse optimization and other related fields,
%
%
including generalized  basis pursuit, basis pursuit denoising, ridge regression, and elastic net.
The results developed in this paper will also deepen the understanding of general $p$-norm based optimization problems emerging from many applications and shed light on their computation and  numerical  analysis.

The rest of the paper is organized as follows. In Section~\ref{sect:formulation}, we introduce generalized $p$-norm based optimization problems arising from applications and show the solution existence and uniqueness. When $p>1$, a lower sparsity bound and other preliminary results are established in Section~\ref{sect:preliminary}. Section~\ref{sect:least_sparsity} develops the main results of the paper, namely, the least sparsity of $p$-norm optimization based generalized basis pursuit,  generalized ridge regression and elastic net, and generalized basis pursuit denoising for $p>1$. In Section~\ref{sect:comparison},  we extend the least sparsity results for $p>1$ to measurement vectors restricted to a subspace of the range of $A$ and compare this result with the related $\ell_p$-optimization for $0<p\le 1$ arising from compressed sensing. Finally, conclusions are made in Section~\ref{sect:conclusion}.


{\it Notation}. Let $A=[a_1, \ldots, a_N]$ be an $m\times N$ real matrix with $N>m$, where $a_i\in \mathbb R^m$ denotes the $i$th column of $A$. For a given vector $x\in \mathbb R^n$, $\mbox{supp}(x)$ denotes the support of $x$.
For any index set $\Ical \subseteq \{1, \ldots, N\}$, let $|\Ical|$ denote the cardinality of $\Ical$, and $A_{\bullet\Ical}=[ a_i ]_{i\in \Ical}$ be the submatrix of $A$ formed by the columns of $A$ indexed by elements of $\Ical$. For a given matrix $M$, $R(M)$ denotes the range of $M$.
Let $\mbox{sgn}(\cdot)$ denote the signum function with $\mbox{sgn}(0):=0$.
Let $\succcurlyeq$ denote the positive semi-definite order, i.e., for two real symmetric matrices $P$ and $Q$, $P \succcurlyeq Q$ means that $(P-Q)$ is positive semi-definite.
The gradient of a real-valued differentiable function $f:\mathbb R^n \rightarrow \mathbb R$ is given by $\nabla f(x)=\big(\frac{\partial f(x)}{\partial x_1}, \ldots, \frac{\partial f(x)}{\partial x_n} \big)^T \in \mathbb R^n$. Let $F:\mathbb R^{n}\times \mathbb R^r \rightarrow \mathbb R^s$ be a differentiable function given by $F(x, z)=(F_1(x, z), \ldots, F_s(x, z))^T$ with $F_i:\mathbb R^n \times \mathbb R^r \rightarrow \mathbb R$ for $i=1,\ldots, s$. The Jacobian of $F$ with respect to $x=(x_1, \ldots, x_n)^T\in \mathbb R^n$ is
\[
  \mathbf J_x F(x, z)
  \, = \, \begin{bmatrix} \frac{\partial F_1(x, z)}{\partial x_1} & \cdots & \cdots \frac{\partial F_1(x, z)}{\partial x_n} \\  \frac{\partial F_2(x, z)}{\partial x_1} & \cdots & \cdots \frac{\partial F_2(x, z)}{\partial x_n} \\  \vdots & & \vdots \\ \frac{\partial F_s(x, z)}{\partial x_1} & \cdots & \cdots \frac{\partial F_s(x, z)}{\partial x_n} \end{bmatrix} \in \mathbb R^{s \times n}.
\]
By convention, we also use $\nabla_x F(x, z)$ to denote $\mathbf J_x F(x, z)$. Finally, by saying that a statement (P) holds for almost all $x$ in a finite dimensional real vector space $E$, we mean that (P) holds on a set $W \subseteq E$ whose complement $W^c$ has zero Lebesgue measure.

%
\section{Generalized $p$-norm based Optimization Problems} \label{sect:formulation}


In this section, we introduce a broad class of widely studied $p$-norm based optimization problems emerging from sparse optimization, statistics and other fields, and we discuss their generalizations. Throughout this section, we let the constant $p>0$, the matrix $A\in \mathbb R^{m\times N}$ and the vector $y \in \mathbb R^m$. For any $p>0$ and $x = (x_1, \ldots, x_N)^T \in \mathbb R^N$, define $\| x \|_p := \big(\sum^N_{i=1} | x_i|^p\big)^{1/p}$.

\gap

\noindent $\bullet$ {\bf Generalized Basis Pursuit}. Consider the following linear equality constrained optimization problem whose objective function is given by the $p$-norm (or quasi-norm):
\begin{equation} \label{eqn:BP_p_norm}
\text{BP}_p: \quad  \underset{x \in \mathbb R^N}{\text{min}} \ \|x\|_p  \quad \text{subject to} \quad Ax=y,
%
%
\end{equation}
where $y\in R(A)$. Geometrically, this problem seeks to minimize the $p$-norm distance  from the origin to the affine set defined by $Ax = y$. When $p=1$, it becomes the standard basis pursuit \cite{CandesRT_TIT06, ChenDS_SIAMREV01, FoucartRauhut_book2013}.

\noindent $\bullet$ {\bf Generalized Basis Pursuit Denoising}. Consider the following constrained optimization problem which incorporates noisy signals:
\begin{equation} \label{eqn:BP_denoising01}
 \text{BPDN}_p: \quad \underset{x \in \mathbb R^N}{\text{min}} \ \| x \|_p \ \quad \text{subject to} \quad \| A x - y \|_2 \le \varepsilon,
\end{equation}
where $\varepsilon > 0$ characterizes the bound of noise or errors.
%
%
When $p=1$, it becomes the standard basis pursuit denoising (or quadratically constrained basis pursuit)  \cite{Bryan_SIREW13, FoucartRauhut_book2013, VanDenBergF_SSC08}.
Another  version of the generalized basis pursuit denoising is given by the following optimization problem:
\begin{equation} \label{eqn:BP_denoising02}
  \underset{x \in \mathbb R^N}{\text{min}} \ \| A x - y \|_2 \ \quad \text{subject to} \quad \| x \|_p \le \eta,
\end{equation}
where the bound $\eta>0$. Similarly, when $p=1$, the optimization problem (\ref{eqn:BP_denoising02}) pertains to a relevant formulation of basis pursuit denoising \cite{FoucartRauhut_book2013,  VanDenBergF_SSC08}.

\noindent $\bullet$ {\bf Generalized Ridge Regression and Elastic Net}.  Consider the following unconstrained optimization problem:
%
%
\begin{equation} \label{eqn:ridge_reg}
  \text{RR}_p: \quad \underset{x \in \mathbb R^N}{\text{min}} \ \frac{1}{2} \| A x - y \|^2_2 + \lambda \, \| x \|^{p}_p,
\end{equation}
where $\lambda >0$ is the penalty parameter. When $p=2$, it becomes the standard ridge regression extensively studied in statistics \cite{HastieTF_book09, HoerlK_Techno70}; when $p=1$, it yields the least absolute shrinkage and selection operator (LASSO) with the $\ell_1$-norm penalty \cite{RTibshirani_EJS13, RTibshirani_JRSS96}. A related optimization problem is the generalized elastic net arising from statistics:
\begin{equation} \label{eqn:elastic_net}
 \text{EN}_p: \quad \underset{x \in \mathbb R^N}{\text{min}} \ \frac{1}{2} \| A x - y \|^2_2 + \lambda_1 \, \| x \|^r_p + \lambda_2 \, \| x \|^2_2,
\end{equation}
where $r>0$ and $\lambda_1, \lambda_2$ are positive penalty parameters. When $p=r=1$, the EN$_p$ (\ref{eqn:elastic_net}) becomes the standard elastic net formulation which combines the $\ell_1$ and $\ell_2$ penalties in regression \cite{Zhou_JRSS05}. Moreover, if we allow $\lambda_2$ to be non-negative, then the RR$_p$ (\ref{eqn:ridge_reg}) can be treated as a special case of the EN$_p$ (\ref{eqn:elastic_net}) with $r=p$, $\lambda=\lambda_1>0$, and $\lambda_2=0$.

%

In the sequel, we show the existence and uniqueness of optimal solutions for the generalized optimization problems introduced above.

\begin{proposition} \label{prop:solution_existence_p_norm}
 Fix an arbitrary $p>0$. Each of the optimization problems (\ref{eqn:BP_p_norm}), (\ref{eqn:BP_denoising01}), (\ref{eqn:BP_denoising02}), (\ref{eqn:ridge_reg}), and (\ref{eqn:elastic_net})  attains an optimal solution for any given $A$, $y$, $\varepsilon>0$, $\eta>0$, $\lambda>0$, $r>0$, $\lambda_1>0$ and $\lambda_2\ge 0$ as long as the associated constraint sets are nonempty. Further, when $p>1$, each of (\ref{eqn:BP_p_norm}), (\ref{eqn:BP_denoising01}),  and (\ref{eqn:ridge_reg}) has a unique optimal solution. Besides, when $p\ge 1$, $r \ge 1$, $\lambda_1>0$, and $\lambda_2>0$, (\ref{eqn:elastic_net}) has a unique optimal solution.
\end{proposition}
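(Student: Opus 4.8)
The plan is to obtain existence from the Weierstrass theorem (coercivity plus closedness of the feasible set) and uniqueness from strict convexity. The single fact that makes all five problems uniform, even in the regime $0<p<1$ where $\|\cdot\|_p$ is not a norm, is the elementary estimate $\|x\|_p^p=\sum_{i=1}^N|x_i|^p\ge\max_i|x_i|^p=\|x\|_\infty^p$, valid for every $p>0$; thus $\|x\|_p\to\infty$ and $\|x\|_p^p\to\infty$ as $\|x\|\to\infty$, while $\|\cdot\|_p$ and $\|\cdot\|_p^p$ are nonnegative and continuous on $\mathbb R^N$. For BP$_p$ (\ref{eqn:BP_p_norm}) the feasible set $\{x:Ax=y\}$ is closed, so each nonempty sublevel set $\{x:Ax=y,\ \|x\|_p\le c\}$ is closed and bounded, hence compact, and a minimizer exists; the same reasoning covers BPDN$_p$ (\ref{eqn:BP_denoising01}), whose feasible set $\{x:\|Ax-y\|_2\le\varepsilon\}$ is closed, and covers RR$_p$ (\ref{eqn:ridge_reg}) and EN$_p$ (\ref{eqn:elastic_net}), which are unconstrained with coercive objectives since the penalty term $\lambda\|x\|_p^p$ (resp.\ $\lambda_1\|x\|_p^r+\lambda_2\|x\|_2^2$ with $\lambda_1>0$, $\lambda_2\ge0$, $r>0$) already tends to $\infty$ with $\|x\|$. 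For problem (\ref{eqn:BP_denoising02}) the roles are swapped: $\{x:\|x\|_p\le\eta\}$ is bounded by the same estimate and closed by continuity of $\|\cdot\|_p$, hence compact (and nonempty, as it contains $0$), while $x\mapsto\|Ax-y\|_2$ is continuous, so Weierstrass again applies.

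For uniqueness when $p>1$, I would first reduce minimizing $\|x\|_p$ to minimizing $\|x\|_p^p$: since $t\mapsto t^p$ is strictly increasing on $[0,\infty)$, the two problems have identical optimal sets. This reduction is essential because $\|\cdot\|_p$ itself is affine along rays through the origin and hence not strictly convex, whereas $x\mapsto\|x\|_p^p=\sum_i|x_i|^p$ \emph{is} strictly convex on $\mathbb R^N$ for $p>1$: the scalar map $t\mapsto|t|^p$ is strictly convex on $\mathbb R$, and for $x\ne z$ at least one coordinate yields a strict inequality in the convexity inequality while the remaining coordinates contribute nonstrict ones, so the sum is strictly less. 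The feasible sets of BP$_p$ and BPDN$_p$ are convex (affine, resp.\ a sublevel set of the convex map $x\mapsto\|Ax-y\|_2$), so strict convexity forces a unique minimizer. For RR$_p$, the objective $\tfrac12\|Ax-y\|_2^2+\lambda\|x\|_p^p$ is the sum of a convex function and the strictly convex function $\lambda\|x\|_p^p$ ($\lambda>0$), hence strictly convex on $\mathbb R^N$, giving uniqueness.

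For EN$_p$ under $p\ge1$, $r\ge1$, $\lambda_1>0$, $\lambda_2>0$, the term $\lambda_1\|x\|_p^r$ need not be strictly convex (take $p=r=1$), so I would instead exploit the quadratic term: $\tfrac12\|Ax-y\|_2^2$ is convex, $\|x\|_p^r$ is convex because $\|\cdot\|_p$ is a norm for $p\ge1$ and $t\mapsto t^r$ is convex and nondecreasing on $[0,\infty)$ for $r\ge1$, and $\lambda_2\|x\|_2^2$ is strictly convex for $\lambda_2>0$; the sum is therefore strictly convex and the minimizer is unique. The argument is otherwise routine; the only point genuinely needing care is the case $0<p<1$, where convexity is unavailable and one must lean entirely on the coercivity estimate for existence (no uniqueness being claimed there), and the slight structural difference of problem (\ref{eqn:BP_denoising02}), where compactness is supplied by the constraint set rather than by the objective.
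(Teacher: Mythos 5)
Your proposal is correct and follows essentially the same route as the paper: Weierstrass (coercivity of the $p$-norm objective, or compactness of the constraint set for problem (\ref{eqn:BP_denoising02})) for existence, and strict convexity of $\|\cdot\|_p^p$ for $p>1$ (respectively of the $\lambda_2\|x\|_2^2$ term for EN$_p$) for uniqueness, after the same reduction from $\|x\|_p$ to $\|x\|_p^p$. The only cosmetic difference is that you establish strict convexity of $\|\cdot\|_p^p$ by summing the strictly convex scalar maps $t\mapsto|t|^p$ coordinatewise, whereas the paper's appendix derives it from the Minkowski inequality and its equality case; both are valid.
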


\begin{proof}
For any $p>0$, the optimization problems (\ref{eqn:BP_p_norm}), (\ref{eqn:BP_denoising01}), (\ref{eqn:ridge_reg}), and (\ref{eqn:elastic_net}) attain optimal solutions since their objective functions are continuous and coercive and the constraint sets (if exist) are closed. The optimization problem (\ref{eqn:BP_denoising02}) also attains a solution because it has a continuous objective function and a compact constraint set.

 When $p \ge 1$, (\ref{eqn:BP_p_norm}) and (\ref{eqn:BP_denoising01}) are convex optimization problems, and they are equivalent to  $\min_{A x = y } \, \| x \|^p_p$ and $\min_{\| A x - y\|_2 \le \varepsilon } \, \| x \|^p_p$ respectively.  Note that when $p>1$, the function $\| \cdot \|^p_p$ is strictly convex on $\mathbb R^N$; see the proof in Appendix (c.f. Section~\ref{sect:appendix}). Therefore, each of (\ref{eqn:BP_p_norm}),  (\ref{eqn:BP_denoising01}) and (\ref{eqn:ridge_reg}) has a unique optimal solution.
When $p\ge 1$, $r \ge 1$, $\lambda_1>0$, and $\lambda_2>0$, the generalized elastic net (\ref{eqn:elastic_net}) is a convex optimization problem with a strictly convex objective function and thus has a unique optimal solution.
\end{proof}

%
\section{Preliminary Results on Sparsity of $p$-norm based Optimization with $p>1$} \label{sect:preliminary}

%
%

This section develops key preliminary results for the global sparsity analysis of $p$-norm based optimization problems when $p>1$.

%
\subsection{Lower Bound on Sparsity of $p$-norm based Optimization with $p>1$} 

We first establish a lower bound on the sparsity of optimal solutions arising from the $p$-norm based optimization with $p>1$. Specifically, we show that when $p>1$, for almost all $(A, y) \in \mathbb R^{m\times N} \times \mathbb R^m$, any (nonzero) optimal solution has at least $(N-m+1)$ nonzero elements and thus is far from sparse when $N \gg m$.  This result is critical to show in the subsequent section that for almost all $(A, y)$, an optimal solution achieves a full support; see the proofs of Propositions~\ref{prop:BP_p>2} and \ref{prop:full_support_BPDN_P>2}, and Theorem~\ref{thm:RR_p}.
Toward this end, we define the following set in $\mathbb R^{m\times N} \times \mathbb R^m$ with $N\ge m$:
\begin{equation} \label{eqn:set_S}
   S \, := \, \Big\{ (A, y) \in \mathbb R^{m\times N} \times \mathbb R^m \, | \, \mbox{ every $m\times m$ submatrix of $A$ is invertible, \ and } \ y \ne 0 \Big \}.
\end{equation}
Clearly, $S$ is open and its complement $S^c$ has zero measure in $\mathbb R^{m\times N} \times \mathbb R^m$. Note that a matrix $A$ satisfying the condition in (\ref{eqn:set_S}) is called to be of completely full rank \cite{LaiWang_SIOPT11}.
 To emphasize the dependence of optimal solutions on the measurement matrix $A$ and the measurement vector $y$, we write an optimal solution as $x^*_{(A, y)}$ or $x^*(A, y)$ below; the latter notation is used when $x^*$ is unique for any given $(A, y)$ so that $x^*$ is a function of $(A, y)$.

\begin{proposition} \label{prop:sparsity_lower_bound}
  Let $p>1$. For any $(A, y)\in S$, the following statements hold:
  \begin{itemize}
    \item [(i)] The optimal solution $x^*_{(A, y)}$ to the BP$_p$ (\ref{eqn:BP_p_norm}) satisfies $| \mbox{supp}(x^*_{(A, y)})| \ge N-m+1$;
    \item [(ii)] If $0<\varepsilon<\| y \|_2$, then the optimal solution $x^*_{(A, y)}$ to the BPDN$_p$ (\ref{eqn:BP_denoising01}) satisfies $| \mbox{supp}(x^*_{(A, y)})| \ge N-m+1$;
    \item [(iii)] For any $\lambda>0$, the optimal solution $x^*_{(A, y)}$ to the RR$_p$ (\ref{eqn:ridge_reg})   satisfies $| \mbox{supp}(x^*_{(A, y)})| \ge N-m+1$;
    \item [(iv)] For any $r>0$, $\lambda_1>0$ and $\lambda_2 \ge 0$, each nonzero optimal solution $x^*_{(A, y)}$ to the EN$_p$ (\ref{eqn:elastic_net}) satisfies $| \mbox{supp}(x^*_{(A, y)})| \ge N-m+1$.
  \end{itemize}
\end{proposition}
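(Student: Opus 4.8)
The plan is to derive, for each of the four problems, the first-order optimality (KKT) conditions at an optimal solution $x^* = x^*_{(A,y)}$ and show that if $x^*$ had a support $\Ical$ with $|\Ical| \le N-m$, then these conditions force a contradiction with the completely-full-rank hypothesis on $A$. The common mechanism is this: for $p > 1$ the objective is differentiable, and on the active coordinates the gradient of $\|x\|_p^p$ (or $\|x\|_p$, or the corresponding EN term) is a smooth vector whose $i$th component is $p\,|x_i|^{p-2}x_i$ up to a positive scalar, while on the inactive coordinates ($i \notin \Ical$) it vanishes. The stationarity condition then expresses a relation between columns of $A$ and this gradient vector, restricted to $\Ical$, and the vanishing on $\Ical^c$ will be the lever that produces a contradiction.

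Concretely, I would proceed problem by problem. For the BP$_p$ (i), Lagrangian stationarity gives $\nabla \|x^*\|_p = A^T \mu$ for some multiplier $\mu \in \mathbb R^m$; the $i$th entry of $\nabla\|x^*\|_p$ is zero exactly when $x^*_i = 0$. If $|\supp(x^*)| = |\Ical| \le N-m$, then $\mu$ is orthogonal to the $\ge m$ columns $\{a_i : i \notin \Ical\}$; since any $m$ of these columns are linearly independent (completely full rank), they span $\mathbb R^m$, forcing $\mu = 0$, hence $\nabla\|x^*\|_p = 0$, hence $x^* = 0$ — but $x^*=0$ is infeasible because $y \ne 0$. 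So $|\Ical| \ge N-m+1$. For RR$_p$ (iii), stationarity reads $A^T(Ax^* - y) + \lambda\, \nabla\|x^*\|_p^p = 0$; again the last term vanishes precisely on $\Ical^c$, so for $i \notin \Ical$ we get $a_i^T(Ax^* - y) = 0$, i.e. the residual $Ax^*-y$ is orthogonal to $\{a_i : i \notin \Ical\}$; if $|\Ical| \le N-m$ these span $\mathbb R^m$, so $Ax^* = y$. But then the residual term drops out and stationarity on $\Ical$ gives $\nabla\|x^*\|_p^p = 0$ on $\Ical$, forcing $x^*_{\Ical} = 0$ and hence $x^* = 0$; plugging back, $A\cdot 0 = y$ contradicts $y \ne 0$. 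The EN$_p$ case (iv) is identical in structure: for a nonzero optimal $x^*$, stationarity is $A^T(Ax^*-y) + \lambda_1 r \|x^*\|_p^{r-p}\nabla\|x^*\|_p^p + 2\lambda_2 x^* = 0$ (the middle coefficient is a well-defined positive scalar since $x^* \ne 0$ and $p>1$ makes $\nabla\|x^*\|_p^p$ smooth); on $\Ical^c$ the second and third terms vanish, so again the residual is orthogonal to the off-support columns, and the same span argument plus nonzeroness of $x^*$ yields the contradiction. For BPDN$_p$ (ii) with $0 < \varepsilon < \|y\|_2$, I would first note the constraint is active at the optimum (since $x=0$ gives residual norm $\|y\|_2 > \varepsilon$, so the minimizer cannot be $0$, and by the usual argument the inequality $\|Ax-y\|_2 \le \varepsilon$ binds), so KKT gives $\nabla\|x^*\|_p + \nu\, A^T(Ax^*-y) = 0$ with multiplier $\nu \ge 0$; since $x^* \ne 0$, $\nu > 0$, and dividing through we are back in the RR-type relation with residual $Ax^*-y$ orthogonal to off-support columns — if $|\Ical|\le N-m$ this forces $Ax^* = y$, contradicting $\|Ax^*-y\|_2 = \varepsilon > 0$.

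The main obstacle I anticipate is not the linear-algebra endgame (which is uniform across cases) but the careful justification of the optimality conditions themselves: verifying differentiability of the objectives for general $p>1$ at the optimal point (in particular that $|t|^{p-2}t$ extends continuously to $0$ when $p>1$, so $\nabla\|x\|_p^p$ is $C^1$ and vanishes exactly on the zero coordinates), confirming constraint qualifications (the equality constraint $Ax=y$ has full row rank since $A$ is completely full rank with $N>m$; for BPDN$_p$ one checks Slater's condition using $\varepsilon>0$), and handling the non-differentiability of $\|x\|_p$ at the origin — which is sidestepped because in every case the relevant optimal solution is shown to be nonzero before the gradient is invoked, or one works with $\|x\|_p^p$ which is differentiable everywhere for $p>1$. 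I would also be slightly careful in (i) that $\nabla\|x\|_p$ as opposed to $\nabla\|x\|_p^p$ is used consistently; since for $p>1$ and $x\ne 0$ these differ by the positive scalar $p\|x\|_p^{p-1}$, the orthogonality conclusions are unaffected, and passing to the equivalent problem $\min\{\|x\|_p^p : Ax=y\}$ from Proposition~\ref{prop:solution_existence_p_norm} makes this cleanest.
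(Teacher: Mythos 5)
Your proposal is correct and follows essentially the same route as the paper's proof: write down the KKT/stationarity conditions, use that $\nabla\|x\|_p^p$ vanishes exactly on the zero coordinates when $p>1$, and invoke the completely-full-rank property of $A$ to force either the Lagrange multiplier or the residual $Ax^*-y$ to vanish, producing a contradiction in each of the four cases (including the sign/componentwise argument needed for EN$_p$ with $\lambda_2>0$). The only cosmetic difference is that you argue via the $\ge m$ off-support columns spanning $\mathbb R^m$ where the paper extracts an invertible $m\times m$ submatrix.
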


We give two remarks on the conditions stated in the proposition before presenting its proof:

 (a) Note that if $\varepsilon \ge \| y\|_2$ in statement (ii), then $x=0$ is feasible such that the BPDN$_p$ (\ref{eqn:BP_denoising01}) attains the trivial (unique) optimal solution $x^*=0$. For this reason, we impose the assumption $0<\varepsilon<\| y \|_2$.

 (b) When $0<r<1$ in statement (iv) with $\lambda_1>0$ and $\lambda_2 \ge 0$, the EN$_p$ (\ref{eqn:elastic_net}) has a non-convex objective function and it may have multiple optimal solutions. Statement (iv) says that any of such nonzero optimal solutions has the sparsity of at least $N-m+1$.

\begin{proof}
  Fix $(A, y) \in S$. We write an optimal solution $x^*_{(A, y)}$ as $x^*$ for notational simplicity in the proof. Furthermore, let $f(x):= \| x \|^p_p$. Clearly, when $p>1$, $f$ is continuously differentiable on $\mathbb R^N$.

 (i) Consider the BP$_p$ (\ref{eqn:BP_p_norm}).
  Note that $0 \ne y \in R(A)$ for any $(A, y) \in S$. By Proposition~\ref{prop:solution_existence_p_norm}, the BP$_p$ (\ref{eqn:BP_p_norm}) (\ref{eqn:BP_p_norm}) has a unique optimal solution $x^*$ for each $(A, y) \in S$. In view of $ x^* = \mbox{argmin}_{Ax = y} f(x)$, the necessary and sufficient optimality condition for $x^*$ is given by the following KKT condition:
 \[
    \nabla f(x^*) - A^T \nu =0, \qquad  A \, x^* = y,
 \]
 where $\nu \in \mathbb R^m$ is the Lagrange multiplier, and $(\nabla f(x))_i= p \cdot \mbox{sgn}(x_i) \cdot | x_i |^{p-1}$ for each $i=1, \ldots, N$. Note that $\nabla f(x)$ is positively homogeneous in $x$ and each $(\nabla f(x))_i$ depends on $x_i$ only.
 Suppose that $x^*$ has at least $m$ zero elements. Hence,  $\nabla f(x^*)$ has at least $m$ zero elements. By the first equation in the KKT condition, we deduce that there is an $m\times m$ submatrix $A_1$ of $A$ such that $A^T_1 \nu=0$. Since $A_1$ is invertible, we have $\nu=0$ such that $\nabla f(x^*)=0$. This further implies that $x^*=0$. This contradicts $A \, x^* = y \ne 0$. Therefore, $| \mbox{supp}(x^*)| \ge N-m+1$  for all $(A, y) \in S$.

 (ii) Consider the BPDN$_p$ (\ref{eqn:BP_denoising01}). Note that for any given $(A, y)\in S$ and $0<\varepsilon < \| y \|_2$, the  BPDN$_p$  (\ref{eqn:BP_denoising01}) has a unique nonzero optimal solution $x^*$.  Let $g(x):=\| A x - y\|^2_2 - \varepsilon^2$. Since $A$ has full row rank, there exists $\ol x \in \mathbb R^N$ such that $g(\ol x)<0$. As  $g(\cdot)$ is a convex function, the Slater's constraint qualification holds for the equivalent convex optimization problem $\min_{g(x) \le 0} \, f(x)$. Hence $x^*$  satisfies the KKT condition with the Lagrange multiplier $\mu\in \mathbb R$, where $\perp$ denotes the orthgonality,
 \[
    \nabla f(x^*) + \mu \, \nabla g(x^*) = 0, \qquad  0 \le \mu \perp g(x^*) \le 0.
 \]
 We claim that $\mu>0$. Suppose not. Then it follows from the first equation in the KKT condition that $\nabla f(x^*)=0$, which implies $x^*=0$. This yields $g(x^*)= \| y\|^2_2 - \varepsilon^2>0$, contradiction. Therefore $\mu>0$ such that $g(x^*)=0$. Using $\nabla g(x^*)= 2 A^T (A x^* - y)$, we have $\nabla f(x^*) + 2 \mu A^T (A x^* - y)  = 0$. Suppose, by contradiction, that $x^*$ has at least $m$ zero elements. Without loss of generality, we assume that the first $m$ elements of $x^*$ are zeros. Partition the matrix $A$ into $A=[A_1 \ A_2]$, where $A_1 \in \mathbb R^{m\times m}$ and $A_2 \in \mathbb R^{m\times (N-m)}$. Similarly, $x^*=[0; \, \wt x^*]$, where $\wt x^* \in \mathbb R^{N-m}$. Hence, the first $m$ elements of $\nabla f(x^*)$ are zero. By the first equation in the KKT condition, we derive $2\mu A^T_1 (A x^* - y)=0$. Since $\mu>0$ and $A_1$ is invertible, we obtain $A x^*- y =0$.
 This shows that $g(x^*)= -\varepsilon^2<0$, contradiction to $g(x^*)=0$.

 (iii) Consider the RR$_p$ (\ref{eqn:ridge_reg}). The unique optimal solution $x^*$ is characterized by the optimality condition: $A^T(Ax^*-y) + \lambda \, \nabla f(x^*) =0$, where $\lambda>0$.
 Suppose, by contradiction, that $x^*$ has at least $m$ zero elements.
 Using the similar argument for Case (ii), we derive that $A x^* - y =0$. In view of the optimality condition, we thus have $\nabla f(x^*)=0$. This implies that $x^*=0$. Substituting $x^*=0$ into the optimality condition yields $A^T y =0$. Since $A$ has full row rank, we obtain $y=0$. This leads to a contradiction. Hence $| \mbox{supp}(x^*)| \ge N-m+1$ for all $(A, y) \in S$.

(iv) Consider the EN$_p$ (\ref{eqn:elastic_net}) with the exponent $r>0$ and the penalty parameters $\lambda_1>0$ and $\lambda_2 \ge 0$.
When $\lambda_2 = 0$, it is closely related to the RR$_p$ with the exponent on $\| x\|_p$ replaced by an arbitrary $r>0$. For any $(A, y) \in S$, let $x^*$ be a (possibly non-unique) nonzero optimal solution which satisfies the optimality condition: $A^T(Ax^*-y) + r \lambda_1 \cdot \| x^* \|^{r-1}_p \cdot \nabla \| x^* \|_p + 2\lambda_2 x^* =0$, where for any nonzero $x \in \mathbb R^N$,
 \[
   \nabla \| x \|_p = \frac{1}{\| x\|^{p-1}_p} \, \Big( \, \mbox{sgn}(x_1) |x_1|^{p-1}, \ \ldots, \ \mbox{sgn}(x_N) |x_N|^{p-1} \, \Big)^T = \frac{\nabla \| x \|^p_p }{p \cdot \| x\|^{p-1}_p} .
 \]
The optimality condition can be equivalently written as
\begin{equation} \label{eqn:EN_optimality}
  p \cdot A^T(Ax^*-y) + r \lambda_1 \cdot \| x^* \|^{r-p}_p \cdot \nabla f(x^*) + 2p \lambda_2 \, x^* =0.
\end{equation}
 Consider two cases: (iv.1) $\lambda_2=0$. By the similar argument for Case (iii), it is easy to show that $| \mbox{supp}(x^*)| \ge N-m+1$ for all $(A, y) \in S$; (iv.2) $\lambda_2>0$. In this case,  suppose, by contradiction, that $x^*$ has at least $m$ zero elements. As before, let $A=[A_1 \ A_2]$ and $x^*=[0; \wt x^*]$ with $A_1 \in \mathbb R^{m\times m}$ and $\wt x^* \in \mathbb R^{N-m}$. Hence, the optimality condition leads to $p \cdot A^T_1 (A x^*-y)=0$. This implies that $A x^*-y=0$ such that $r \lambda_1 \cdot \| x^* \|^{r-p}_p \cdot \nabla f(x^*) + 2p \lambda_2 \, x^* =0$.  Since $(\nabla f(x))_i= p \cdot \mbox{sgn}(x_i) \cdot | x_i |^{p-1}$ for each $i=1, \ldots, N$, we obtain $r \lambda_1 \cdot \| x^* \|^{r-p}_p \cdot | x^*_i|^{p-1} + 2 \lambda_2 |x^*_i|=0$ for each $i$. Hence $x^*=0$,  a contradiction.
  We thus conclude that $| \mbox{supp}(x^*)| \ge N-m+1$ for all $(A, y) \in S$.
\end{proof}

We discuss an extension of the sparsity lower bound developed in Proposition~\ref{prop:sparsity_lower_bound} to another formulation of the basis pursuit denoising given in (\ref{eqn:BP_denoising02}).
 It is noted that if $\eta \ge \min_{A x = y} \|x\|_p$ (which implies $y \in R(A)$), then the optimal value of (\ref{eqn:BP_denoising02}) is zero and can be achieved at some feasible $x^*$ satisfying $A x^* = y$.  Hence any optimal solution $x'$ must satisfy $A x' = y$ so that the optimal solution set is given by $\{ x \in \mathbb R^N \, | \, A x = y, \ \| x \|_p \le \eta \}$, which is closely related to the BP$_p$ (\ref{eqn:BP_p_norm}). This means that if $\eta \ge \min_{A x = y} \|x\|_p$, the optimization problem (\ref{eqn:BP_denoising02}) can be converted to a reduced and simpler problem. For this reason, we assume that $0<\eta < \min_{A x = y} \|x\|_p$ for (\ref{eqn:BP_denoising02}). The following proposition presents important results under this assumption; these results will be used for the proof of Theorem~\ref{thm:BPDn02_p}.

\begin{proposition} \label{prop:BPDN2_lower_bound}
 The following hold for the optimization problem (\ref{eqn:BP_denoising02}) with $p>1$:
\begin{itemize}
  \item [(i)] If $A$ has full row rank and $0<\eta < \min_{A x = y} \|x\|_p$, then  (\ref{eqn:BP_denoising02}) attains a unique optimal solution with a unique positive Lagrange multiplier;
  \item [(ii)] For any $(A, y)$ in the set $S$ defined in (\ref{eqn:set_S}) and $0<\eta < \min_{A x = y} \|x\|_p$, the unique optimal solution $x^*_{(A, y)}$ satisfies $| \mbox{supp}(x^*_{(A, y)})| \ge N-m+1$.
\end{itemize}
\end{proposition}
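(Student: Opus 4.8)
The plan is to establish (i) in three stages — existence, uniqueness of the optimal solution, and positivity/uniqueness of the Lagrange multiplier — and then deduce (ii) from the resulting KKT system by the same ``invertible $m\times m$ submatrix'' device used in the proof of Proposition~\ref{prop:sparsity_lower_bound}.

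For existence I would simply invoke Proposition~\ref{prop:solution_existence_p_norm}, since the feasible set $\{x:\|x\|_p\le\eta\}$ is compact and the objective is continuous. The crucial preliminary facts are: (a) the optimal value $v^*$ is strictly positive, because the hypothesis $0<\eta<\min_{Ax=y}\|x\|_p$ means no feasible point satisfies $Ax=y$ (note that $A$ full row rank gives $y\in R(A)$, so $\min_{Ax=y}\|x\|_p$ is well defined and attained); and (b) every optimal solution $x^*$ lies on the sphere $\|x^*\|_p=\eta$ — indeed, if $\|x^*\|_p<\eta$ one can pick $\bar x$ with $A\bar x=y$ and set $x^*+t(\bar x-x^*)$ for small $t>0$; this point stays feasible by interiority while the objective drops to $(1-t)\|Ax^*-y\|_2<v^*$, a contradiction.

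The uniqueness step is the one that needs care, and I expect it to be the main obstacle, because the objective $x\mapsto\|Ax-y\|_2$ is convex but not strictly convex (it is flat along $\ker A\ne\{0\}$), so uniqueness cannot be read off from the objective. Instead I would combine the strict convexity of $\|\cdot\|_p^p$ for $p>1$ (as recalled in the Appendix) with fact (b): if $x_1\ne x_2$ were both optimal, then $\|x_1\|_p=\|x_2\|_p=\eta$, while strict convexity gives $\big\|\tfrac{x_1+x_2}{2}\big\|_p^p<\tfrac12\big(\|x_1\|_p^p+\|x_2\|_p^p\big)=\eta^p$ and convexity of $\|A\cdot-y\|_2$ gives $\big\|A\tfrac{x_1+x_2}{2}-y\big\|_2\le v^*$; thus the midpoint is an optimal solution lying strictly inside the ball, contradicting (b). So $x^*$ is unique and, as $\eta>0$, nonzero. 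For the multiplier I would write the constraint as $h(x):=\|x\|_p^p-\eta^p\le 0$, which is $C^1$ for $p>1$ with $\nabla h=\nabla f$ where $f:=\|\cdot\|_p^p$; Slater holds since $h(0)=-\eta^p<0$, so there is $\mu\ge 0$ with $A^T(Ax^*-y)+\mu\,\nabla f(x^*)=0$ and $0\le\mu\perp h(x^*)\le 0$. If $\mu=0$ then $A^T(Ax^*-y)=0$, and full row rank forces $Ax^*=y$, contradicting $v^*>0$; hence $\mu>0$. Uniqueness of $\mu$ then follows because $x^*\ne 0$ implies $\nabla f(x^*)\ne 0$, so $\mu=-\big(A^T(Ax^*-y)\big)_j/\big(\nabla f(x^*)\big)_j$ for any index $j$ with $x^*_j\ne 0$, which does not depend on $j$.

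For (ii), fix $(A,y)\in S$; then $A$ has full row rank, so (i) applies and yields the unique nonzero $x^*$ with $\mu>0$, $A^T(Ax^*-y)+\mu\,\nabla f(x^*)=0$, and $Ax^*\ne y$. Suppose, for contradiction, that $|\mbox{supp}(x^*)|\le N-m$; after a column permutation we may assume the first $m$ components of $x^*$ vanish, and partition $A=[A_1\ A_2]$, $x^*=[0;\widetilde x^*]$ with $A_1\in\mathbb R^{m\times m}$ invertible by the definition of $S$. Since $(\nabla f(x^*))_i=p\,\mbox{sgn}(x^*_i)|x^*_i|^{p-1}$ depends only on $x^*_i$, the first $m$ components of $\nabla f(x^*)$ are zero, so the first $m$ rows of the KKT equation read $A_1^T(Ax^*-y)=0$; invertibility of $A_1$ then forces $Ax^*=y$, contradicting $v^*>0$. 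Therefore $|\mbox{supp}(x^*_{(A,y)})|\ge N-m+1$.
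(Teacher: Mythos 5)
Your proposal is correct and follows essentially the same route as the paper: strict convexity of $\|\cdot\|_p^p$ forces uniqueness once every optimal solution is known to lie on the sphere $\|x\|_p=\eta$, positivity of $\mu$ follows from full row rank and $0<\eta<\min_{Ax=y}\|x\|_p$, and part (ii) uses the same invertible $m\times m$ submatrix device as Proposition~\ref{prop:sparsity_lower_bound}. The only local difference is that you establish $\|x^*\|_p=\eta$ directly by moving along the segment toward a point $\bar x$ with $A\bar x=y$, whereas the paper obtains it from complementary slackness after showing $\mu>0$; both are valid, and your explicit componentwise argument for the uniqueness of $\mu$ fills in a detail the paper leaves implicit.
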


%
%

\begin{proof}
  (i) Let $A$ be of full row rank. Hence, $y \in R(A)$ so that $\eta$ is well defined.
  Let $x^*$ be an arbitrary optimal solution to (\ref{eqn:BP_denoising02}) with the specified $\eta>0$. Hence $x^*=\mbox{argmin}_{f(x) \le \eta^p} \, \frac{1}{2} \| A x - y \|^2_2$, where we recall that $f(x)=\| x \|^p_p$. Clearly, the Slater's constraint qualification holds for the convex optimization problem (\ref{eqn:BP_denoising02}). Therefore, $x^*$ satisfies the following KKT condition:
\begin{equation} \label{eqn:BPDN02_KKT}
   A^T (A x^* - y) + \mu \nabla f(x^*)=0, \qquad  0\le \mu \perp f(x^*)- \eta^p \le 0,
\end{equation}
where $\mu \in \mathbb R$ is the Lagrange multiplier. We claim that $\mu$ must be positive. Suppose not, i.e., $\mu=0$. By the first equation in (\ref{eqn:BPDN02_KKT}), we obtain $A^T (A x^* - y)=0$. Since $A$ has full row rank, we have $A x^* = y$. Based on the assumption on $\eta$, we further have $\| x^*\|_p>\eta$, contradiction to $f(x^*)\le \eta^p$. This proves the claim. Since $\mu>0$,  it follows from the second equation in (\ref{eqn:BPDN02_KKT}) that any optimal solution $x^*$ satisfies $f(x^*)=\eta^p$ or equivalently $\| x ^* \|_p = \eta$.
To prove the uniqueness of optimal solution, suppose, by contradiction, that $x^*$ and $x'$ are two distinct optimal solutions for the given $(A, y)$. Thus $\| x^*\|_p=\| x'\|_p=\eta$. Since (\ref{eqn:BP_denoising02}) is a convex optimization problem, the optimal solution set is convex so that $\lambda x^*+(1-\lambda) x'$ is an optimal solution for any $\lambda \in[0, 1]$. Hence, $\|\lambda x^*+(1-\lambda) x'\|_p=\eta, \forall \, \lambda \in[0, 1]$. Since  $\|\cdot \|^p_p$ is strictly convex  when $p>1$, we have $\eta^p = \|\lambda x^*+(1-\lambda) x'\|^p_p < \lambda \|x^*\|^p_p +(1-\lambda)\|x'\|^p_p = \eta^p$ for each $\lambda \in (0, 1)$. This yields a contradiction. We thus conclude that (\ref{eqn:BP_denoising02}) attains a unique optimal solution with $\mu>0$.

(ii) Let $(A, y) \in S$. Clearly, $A$ has full row rank so that (\ref{eqn:BP_denoising02}) has a unique optimal solution $x^*$ with a positive Lagrange multiplier $\mu$. Suppose $x^*$ has at least $m$ zero elements. It follows from the first equation in (\ref{eqn:BPDN02_KKT}) and the similar argument for Case (iii) of Proposition~\ref{prop:sparsity_lower_bound} that $A x^* = y$. In light of the assumption on $\eta$, we have $\| x^* \|_p> \eta$, a contradiction. Therefore $| \mbox{supp}(x^*)| \ge N-m+1$ for any $(A, y) \in S$.
\end{proof}

%
\subsection{Technical Result on Measure of the Zero Set of $C^1$-functions} \label{subsect:measure} 

%
%

As shown in Proposition~\ref{prop:solution_existence_p_norm},  when $p>1$, each of the BP$_p$ (\ref{eqn:BP_p_norm}),  BPDN$_p$ (\ref{eqn:BP_denoising01}), and RR$_p$  (\ref{eqn:ridge_reg}) has a unique optimal solution $x^*$ for any given $(A, y)$. Under additional conditions, each of the EN$_p$ (\ref{eqn:elastic_net}) and the optimization problem (\ref{eqn:BP_denoising02}) also attains a unique optimal solution.
Hence, for each of these problems, the optimal solution $x^*$ is a function of $(A, y)$, and each component of $x^*$ becomes a real-valued function $x^*_i(A, y)$. Therefore, the global sparsity of $x^*$ can be characterized by the zero set of each $x^*_i(A, y)$. The following technical lemma gives a key result on the measure of the zero set of a real-valued $C^1$-function under a suitable assumption.

\begin{lemma} \label{lem:measure_of_zero_set}
 Let $f:\mathbb R^n \rightarrow \mathbb R$ be continuously differentiable (i.e., $C^1$)  on an open set $W \subseteq \mathbb R^n$ whose complement $W^c$ has zero measure in $\mathbb R^n$. Suppose $\nabla f(x) \ne 0$ for any $x \in W$ with $f(x)=0$. Then the zero set
 $f^{-1}(\{0\}):=\{x \in \mathbb R^n \, | \, f(x)=0 \}$ has zero measure.
\end{lemma}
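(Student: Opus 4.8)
The plan is to combine the implicit function theorem with a countable covering argument and a slicing (Fubini/Tonelli) estimate. Write $Z := f^{-1}(\{0\})$ and decompose it as $Z = (Z \cap W) \cup (Z \cap W^c)$. The second piece lies in $W^c$, which has zero measure by hypothesis, so it suffices to prove that $Z \cap W$ has zero measure.

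First I would localize. Fix an arbitrary $x_0 \in Z \cap W$. Since $\nabla f(x_0) \ne 0$, there is an index $j$ with $\partial f(x_0)/\partial x_j \ne 0$. Because $f$ is $C^1$ on the open set $W$, the implicit function theorem yields an open box $U_{x_0} \subseteq W$ containing $x_0$, an open set $V_{x_0} \subseteq \mathbb{R}^{n-1}$ in the remaining coordinates $\hat x := (x_1, \ldots, x_{j-1}, x_{j+1}, \ldots, x_n)$, and a continuous (indeed $C^1$) function $\phi_{x_0} : V_{x_0} \to \mathbb{R}$ such that, after reordering coordinates, $Z \cap U_{x_0}$ is exactly the graph $\{(\hat x, \phi_{x_0}(\hat x)) : \hat x \in V_{x_0}\}$. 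Since $\phi_{x_0}$ is continuous, this graph is a Borel set, and by Fubini's theorem its $n$-dimensional Lebesgue measure equals the integral over $V_{x_0}$ of the one-dimensional measure of the singleton $\{\phi_{x_0}(\hat x)\}$, which is zero. Hence $Z \cap U_{x_0}$ has zero measure.

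Next I would pass from the local statement to the global one. The family $\{U_{x_0} : x_0 \in Z \cap W\}$ is an open cover of $Z \cap W$; since $\mathbb{R}^n$ is second countable, hence Lindel\"of, this cover admits a countable subcover $\{U_{x_k}\}_{k \ge 1}$. Then $Z \cap W \subseteq \bigcup_{k \ge 1} (Z \cap U_{x_k})$, so by countable subadditivity of Lebesgue (outer) measure, $Z \cap W$ has zero measure. Combined with the nullity of $Z \cap W^c$, this gives that $Z$ has zero measure, completing the proof.

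The only steps requiring genuine care are the local reduction via the implicit function theorem and the elementary fact that the graph of a continuous function over a region of $\mathbb{R}^{n-1}$ is Lebesgue-null in $\mathbb{R}^n$; the Lindel\"of property and countable subadditivity are routine. I would also remark on the degenerate case $n = 1$, where the ``graph'' is a single point: here continuity and non-vanishing of $f'$ on $W$ force every zero of $f$ in $W$ to be isolated, so $Z \cap W$ is at most countable and hence null, and the same conclusion follows.
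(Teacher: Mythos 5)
Your proposal is correct and follows essentially the same route as the paper's proof: localize at each zero via the implicit function theorem to see the zero set as a graph of a $C^1$ (hence continuous) function, which is Lebesgue-null, then globalize with a Lindel\"of/countable-subcover argument and countable subadditivity, and finally absorb $Z\cap W^c$ into the null set $W^c$. The only cosmetic difference is that you cover only $Z\cap W$ rather than all of $W$ (the paper also handles points of $W$ where $f\ne 0$ by continuity), and you justify the nullity of the graph by Fubini rather than by citing that it is an $(n-1)$-dimensional manifold; both are immaterial variations.
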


\begin{proof}
 Consider an arbitrary $x^*\in W$. If $f(x^*)=0$, then $\nabla f(x^*) \ne 0$.
 Without loss of generality, we assume that $\frac{\partial f}{\partial x_n}(x^*) \ne 0$. Let $z:=(x_1, \ldots, x_{n-1})^T \in \mathbb R^{n-1}$.  By the implicit function theorem, there exist an open neighborhood $U \subset \mathbb R^{n-1}$ of $z^*:=(x^*_1, \ldots, x^*_{n-1})^T$, an open neighborhood $V\subset \mathbb R$ of $x^*_n$, and a unique $C^1$ function $g:U\rightarrow V$ such that $f(z, g(z))=0$ for all $z \in U$. The set $f^{-1}(\{0\})\cap (U\times V) = \{ (z, g(z) ) \, | \, z \in U \}$ has zero measure in $\mathbb R^{n}$ since it is an $(n-1)$ dimensional manifold in the open set $U\times V \subset \mathbb R^n$. Moreover, in view of the continuity of $f$, we deduce that for any $x^* \in W$ with $f(x^*) \ne 0$, there exists an open set $\mathcal B(x^*)$ of $x^*$ such that $f(x) \ne 0, \forall \, x \in \mathcal B(x^*)$.
Combining these results, it is seen that for any $x \in W$, there exists an open set $\mathcal B(x)$  of $x$ such that $f^{-1}(\{0\})\cap \mathcal B(x)$ has zero measure.
Clearly, the union of these open sets given by $\bigcup_{x \in W} \mathcal B(x)$ forms an open cover of $W$. Since $\mathbb R^n$ is a topologically separable metric space, so is $W \subset \mathbb R^n$ and thus it is a Lindel\"{o}f space \cite{Munkres_book00, Royden_book88}. Hence such an open cover attains a countable sub-cover $\bigcup_{i\in \mathbb N} \mathcal B(x^i)$ of $W$, where each $x^i \in W$.
Since $f^{-1}(\{0\}) \cap \mathcal B(x^i)$ has zero measure for each $i \in \mathbb N$,
 the set $W\cap f^{-1}(\{0\})$ has zero measure. Besides, since $f^{-1}(\{0\}) \subseteq W^c \cup \big( W \cap f^{-1}(\{0\}) \big)$ and both $W^c$ and $W\cap f^{-1}(\{0\})$ have zero measure, we conclude that $f^{-1}(\{0\})$ has zero measure.
\end{proof}


%
\section{Least Sparsity of $p$-norm based Optimization Problems with $p>1$} \label{sect:least_sparsity}

In this section, we establish the main results of the paper, namely, when $p>1$, the $p$-norm based optimization problems yield {\em least} sparse solutions for almost all $(A, y)$. We introduce more notation to be used through this section. Let $f(x):=\| x \|^p_p$ for $x \in \mathbb R^N$, and when $p>1$, we define for each $z \in \mathbb R$,
\begin{equation} \label{eqn:g_h_def}
   g(z) \, := \, p \cdot \mbox{sgn}(z) \cdot |z|^{p-1}, \qquad h(z) \, := \, \mbox{sgn}\left(z \right) \cdot \left |\frac{z}{p} \right|^{\frac{1}{p-1}},
\end{equation}
where $\mbox{sgn}(\cdot)$ denotes the signum function with $\mbox{sgn}(0):=0$. Direct calculation shows that (i) when $p>1$, $g(z)=(|z|^p)', \forall \, z \in \mathbb R$, and $h(z)$ is the inverse function of $g(z)$; (ii) when $p \ge 2$, $g$ is continuously differentiable and $g'(z)= p (p-1) \cdot |z|^{p-2}, \forall \, z \in \mathbb R$; and (iii) when $1<p \le 2$, $h$ is continuously differentiable and $h'(z)=  |z|^{\frac{2-p}{p-1}}/[(p-1) \cdot p^{1/(p-1)}], \forall \, z \in \mathbb R$. Furthermore, when $p>1$,  $\nabla f(x)=\big( g(x_1), \ldots, g(x_N) \big)^T$.

The proofs for the least sparsity developed in the rest of the section share the similar methodologies. To facilitate the reading, we give an overview of main ideas of these proofs and comment on certain key steps in the proofs. As indicated at the beginning of Section~\ref{subsect:measure}, the goal is to show that the zero set of each component of an optimal solution $x^*$, which is a real-valued function of $(A, y)$, has zero measure. To achieve this goal, we first show using the KKT conditions and the implicit function theorem that $x^*$, possibly along with a Lagrange multiplier if applicable, is a $C^1$ function of $(A, y)$ on a suitable open set $S^{\,\prime}$ in $\mathbb R^{m\times N} \times \mathbb R^m$ whose complement has zero measure. We then show that for each $i=1, \ldots, N$, if $x^*_i$ is vanishing at $(A, y) \in S^{\,\prime}$, then its gradient evaluated at $(A, y)$ is nonzero. In view of Lemma~\ref{lem:measure_of_zero_set}, this leads to the desired result. Moreover, for each of the generalized optimization problems with $p>1$, i.e., the BP$_p$, BPDN$_p$, RR$_p$, and EN$_p$, we divide their proofs into two separate cases: (i) $p\ge 2$; and (ii) $1<p \le 2$. This is because each case invokes the derivative of $g(\cdot)$ or its inverse function $h(\cdot)$ defined in (\ref{eqn:g_h_def}). When $p \ge 2$, the derivative $g'(\cdot)$ is globally well defined. On the contrary, when $1<p\le 2$, $g'(\cdot)$ is not defined at zero. Hence, we use $h(\cdot)$ instead, since $h'(\cdot)$ is globally well defined in this case. The different choice of $g$ or $h$ gives rise to different arguments in the following proofs, and the proofs for $1<p \le 2$ are typically more involved.

%
\subsection{Least Sparsity of the Generalized Basis Pursuit with $p>1$} 

We consider the cases where $p \ge 2$ first.

\begin{proposition} \label{prop:BP_p>2}
 Let $p \ge 2$ and $N \ge m$. For almost all $(A, y) \in \mathbb R^{m\times N} \times \mathbb R^m$, the unique optimal solution $x^*_{(A, y)}$ to the BP$_p$ (\ref{eqn:BP_p_norm}) satisfies $| \mbox{supp}(x^*_{(A, y)})| = N$.
\end{proposition}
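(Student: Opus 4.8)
The plan is to apply Lemma~\ref{lem:measure_of_zero_set} to each component $x_i^*(A,y)$ of the unique optimal solution, viewed as a function on a suitable open full-measure subset of $\mathbb R^{m\times N}\times\mathbb R^m$. First I would fix the open set $S$ from \eqref{eqn:set_S}, on which every $m\times m$ submatrix of $A$ is invertible and $y\ne 0$; its complement has zero measure. On $S$, by Proposition~\ref{prop:sparsity_lower_bound}(i) the unique optimal solution $x^*=x^*(A,y)$ already has at least $N-m+1$ nonzero components, hence at most $m-1$ zero components. The KKT system is $g(x_i^*)=a_i^T\nu$ for $i=1,\dots,N$ together with $Ax^*=y$, where $g(z)=p\,\sgn(z)|z|^{p-1}$ and $\nu\in\mathbb R^m$ is the multiplier. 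Since $p\ge 2$, $g$ is $C^1$ on all of $\mathbb R$ with $g'(z)=p(p-1)|z|^{p-2}$, so the map $F(x,\nu,A,y)=\big(g(x_1)-a_1^T\nu,\dots,g(x_N)-a_N^T\nu,\ Ax-y\big)$ is $C^1$ in all arguments.

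Next I would invoke the implicit function theorem to show $(x^*,\nu^*)$ is a $C^1$ function of $(A,y)$ near any point of $S$. This requires the Jacobian of $F$ with respect to $(x,\nu)\in\mathbb R^{N+m}$ at a solution to be nonsingular. That Jacobian has the block form $\begin{bmatrix} D & -A^T \\ A & 0\end{bmatrix}$ where $D=\diag\big(g'(x_1^*),\dots,g'(x_N^*)\big)\succcurlyeq 0$. The key linear-algebra fact is that such a saddle-point matrix is nonsingular provided $D$ restricted to the kernel of $A$ is positive definite and $A$ has full row rank; concretely, if $Du=A^T\xi$ and $Au=0$ then $u^TDu=u^TA^T\xi=0$, so $u$ lies in the kernel of $D$, i.e. $u$ is supported on the zero set $Z$ of $x^*$, and $Au=A_{\bullet Z}\,u_Z=0$; since $|Z|\le m-1$ and every such set of columns of $A$ is linearly independent on $S$, we get $u=0$, whence $\xi=0$ too. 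Thus the Jacobian is invertible on $S$, and $(x^*,\nu^*)$ is $C^1$ on $S$ (uniqueness of the optimal solution guarantees the local branch is the global one).

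It remains to show that for each $i$, if $x_i^*(A,y)=0$ at some $(A,y)\in S$, then $\nabla_{(A,y)}\,x_i^*(A,y)\ne 0$. Here I would differentiate the KKT identities with respect to a well-chosen scalar parameter — for instance a single entry of $y$, or of the column $a_j$ for some $j$ with $x_j^*\ne 0$ — and derive the linearized system for the directional derivatives $(\dot x,\dot\nu)$, which again has the saddle-point coefficient matrix above plus an explicit inhomogeneous term. Solving for $\dot x_i$ and showing it is nonzero for at least one choice of perturbation direction is the crux. A clean route: perturb $y$ in the direction $y\mapsto y+te_k$; then $\dot x$ solves $D\dot x-A^T\dot\nu=0$, $A\dot x=e_k$; if $\dot x_i=0$ for all $k$, then the $i$-th row of the solution operator vanishes, which I expect to contradict invertibility of the saddle matrix (the solution operator from $e_k$ to $\dot x$ is a fixed nonzero-row matrix). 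Alternatively, perturbing a column $a_j$ with $x_j^*\ne 0$ injects a term proportional to $x_j^*$ and $\nu^*$ into the right-hand side, and one argues $\dot x_i\ne 0$ using $\nu^*\ne 0$ (which holds since $\nabla f(x^*)\ne 0$). Once some partial derivative of $x_i^*$ is nonzero at every zero of $x_i^*$ in $S$, Lemma~\ref{lem:measure_of_zero_set} applied with $W=S$ gives that $(x_i^*)^{-1}(\{0\})$ has zero measure; the finite union over $i=1,\dots,N$ still has zero measure, and off this union $x^*$ has full support.

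The main obstacle I anticipate is the last step — producing, for a \emph{generic} (not all) $(A,y)$ with $x_i^*=0$, an explicit perturbation direction whose induced $\dot x_i$ is provably nonzero, cleanly and uniformly in $i$. The saddle-point structure makes the qualitative invertibility easy, but extracting the precise nonvanishing of one scalar component of the response requires care, and is exactly where the hypothesis $(A,y)\in S$ (completely full rank plus $y\ne 0$, hence $\nu^*\ne 0$) must be used decisively.
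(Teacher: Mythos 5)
Your setup coincides with the paper's proof: same open set $S$, same KKT system $F(x,\nu,A,y)=0$, and your kernel argument for the invertibility of the saddle-point Jacobian $\begin{bmatrix} D & -A^T\\ A & 0\end{bmatrix}$ is essentially verbatim the paper's (using $|\Jcal^c|\le m-1$ from Proposition~\ref{prop:sparsity_lower_bound} and the completely-full-rank property of $A$). The gap is exactly where you flagged it, and your proposed justification for that step does not work as stated. You claim that if $\dot x_i=0$ for every perturbation direction $e_k$ of $y$, then ``the $i$-th row of the solution operator vanishes, which \ldots contradict[s] invertibility of the saddle matrix.'' It does not: the solution operator in question is the off-diagonal block $P_{13}$ of the inverse $P=W^{-1}$, and an invertible matrix can perfectly well have a zero row inside one block of its inverse (the full row of $P$ cannot vanish, but $(P_{13})_{i\bullet}$ alone can, with the mass carried by $(P_{11})_{i\bullet}$ and $(P_{12})_{i\bullet}$). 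So invertibility of $W$ alone yields nothing here.

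The missing ingredient is the structural fact you already used in the kernel argument but did not redeploy: for $i\in\Jcal^c$ one has $g'(x_i^\diam)=0$ (valid for $p>2$; the case $p=2$ is degenerate since $g'\equiv 2$, and is best handled separately via the explicit least-norm solution $x^*=A^T(AA^T)^{-1}y$). With $\Lambda_1=0$, the $(1,1)$-block of the identity $PW=I$ collapses to $P_{13}A_1=I$, and \emph{this} identity forces every row of $P_{13}=\nabla_y G_1(A^\diam,y^\diam)$ to be nonzero, which is precisely the nonvanishing-gradient hypothesis needed for Lemma~\ref{lem:measure_of_zero_set}. Your alternative route (perturbing a column $a_j$ with $x_j^*\ne 0$ and using $\nu^*\ne 0$) is left too vague to assess and is unnecessary: perturbing $y$ alone suffices once the identity $P_{13}A_1=I$ is extracted. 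The rest of your outline (restriction to the full-measure open set, countable union over $i$, application of the lemma) is correct and matches the paper.
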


\begin{proof}
Recall that for any $(A, y) \in \mathbb R^{m\times N} \times \mathbb R^m$, the necessary and sufficient optimality condition for $x^*$ is given by the following KKT condition given in Proposition~\ref{prop:sparsity_lower_bound}:
 \[
    \nabla f(x^*) - A^T \nu =0, \qquad  A \, x^* = y,
 \]
 where $\nu \in \mathbb R^m$ is the Lagrange multiplier, and $(\nabla f(x))_i= p \cdot \mbox{sgn}(x_i) \cdot | x_i |^{p-1} = g(x_i)$ for each $i=1, \ldots, N$, where the function $g$ is defined in (\ref{eqn:g_h_def}).
 We show below that for any $(A^\diam, y^\diam)$ in the open set $S$ defined in (\ref{eqn:set_S}), $x^*(A, y)$ is continuously differentiable at $(A^\diam, y^\diam)$ and that each $x^*_i$ with $x^*_i(A^\diam, y^\diam)=0$ has  nonzero gradient at $(A^\diam, y^\diam)$.

%

 Recall that $x^*$ is unique for any $(A, y)$. Besides, for each $(A, y)\in S$, $A^T$ has full column rank such that the Lagrange multiplier $\nu$ is also unique in view of the first equation of the KKT condition. Therefore, $(x^*, \nu)$ is a function of $(A, y) \in S$. For notational simplicity, let $x^\diam:=x^*(A^\diam, y^\diam)$ and $\nu^\diam:=\nu(A^\diam, y^\diam)$. Define the index set $\Jcal := \{ i \, | \, x^\diam_i \ne 0 \}$. By Proposition~\ref{prop:sparsity_lower_bound}, we see that $\mathcal J$ is nonempty and $|\Jcal^c| \le m-1$. Further, in light of the KKT condition, $(x^*, \nu)\in \mathbb R^N\times \mathbb R^m$ satisfies the following equation:
 \[
    F(x, \nu, A, y) \, := \, \begin{bmatrix} \nabla f(x) - A^T \nu \\ A x - y \end{bmatrix} \, = \, 0.
 \]
 Clearly, $F:\mathbb R^N \times \mathbb R^m \times \mathbb R^{m\times N} \times \mathbb R^m \rightarrow \mathbb R^{N+m}$ is $C^1$, and its Jacobian with respect to $(x, \nu)$ is
 \[
    \mathbf J_{(x, \nu)} F(x, \nu, A, y) \, = \, \begin{bmatrix} \Lambda(x) & -A^T \\ A & 0 \end{bmatrix},
 \]
 where the diagonal matrix $\Lambda(x) :=\mbox{diag}( g'(x_1), \ldots, g'(x_N))$. Partition $\Lambda^\diam:=\Lambda(x^\diam)$ and $A$ as $\Lambda^\diam= \mbox{diag}( \Lambda^\diam_1, \,  \Lambda^\diam_2)$ and $A = \big [ A_1 \ A_2 \big]$ respectively, where $\Lambda^\diam_{1} := \mbox{diag}( g'(x^\diam_i) )_{i\in \Jcal^c}=0$, $\Lambda^\diam_{2} := \mbox{diag}( g'(x^\diam_i) )_{i\in \Jcal}$ is positive definite, $A_1:=A_{\bullet \Jcal^c}$, and $A_2:=A_{\bullet \Jcal}$. We claim that the following matrix is invertible:
 \[
     W \, := \, \mathbf J_{(x, \nu)} F(x^\diam, \nu^\diam, A^\diam, y^\diam) = \begin{bmatrix} \Lambda^\diam_1 & 0 & -A^T_1 \\ 0 & \Lambda^\diam_2 & - A^T_2 \\ A_1 & A_2 & 0 \end{bmatrix} \in \mathbb R^{(N+m)\times (N+m)}.
 \]
 In fact, let $z:=[u_1; u_2; v]\in \mathbb R^{N+m}$ be such that $W z =0$. Since $\Lambda^\diam_{1}=0$ and $\Lambda^\diam_{2}$ is positive definite, we have $A^T_1 v =0$, $u_2 = (\Lambda^\diam_{2})^{-1} A^T_2 v$, and $A_1 u_1 + A_2 u_2 =0$. Therefore, $0= v^T (A_1 u_1 + A_2 u_2) = v^T A_2 (\Lambda^\diam_{2})^{-1} A^T_2 v$, which implies that $A^T_2 v=0$ such that $u_2=0$ and $A_1 u_1=0$. Since $|\Jcal^c| \le m-1$ and any $m\times m$ submatrix of $A$ is invertible, the columns of  $A_1$ are linearly independent such that $u_1=0$. This implies that $A^T v =0$. Since $A$ has full row rank, we have $v=0$ and thus $z=0$. This proves that $W$ is invertible.
 By the implicit function theorem, there are local $C^1$ functions $G_1, G_2, H$ such that $x^*=(x^*_{\Jcal^c}, x^*_\Jcal)=(G_1(A, y), G_2(A, y)):=G(A, y)$, $\nu=H(A, y)$, and $F(G(A, y), H(A, y), A, y)=0$ for all $(A, y)$ in a neighborhood of $(A^\diam, y^\diam)$.
%
%

By the chain rule, we have
\[
   \underbrace{\mathbf J_{(x, \nu)} F(x^\diam, \nu^\diam, A^\diam, y^\diam)}_{:=W} \cdot \begin{bmatrix} \nabla_y G_1(A^\diam, y^\diam) \\ \nabla_y G_2(A^\diam, y^\diam) \\ \nabla_y H(A^\diam, y^\diam) \end{bmatrix} + \mathbf J_{y} F(x^\diam, \nu^\diam, A^\diam, y^\diam) \, = \, 0,
\]
where
\[
  \mathbf J_{y} F(x^\diam, \nu^\diam, A^\diam, y^\diam) = \begin{bmatrix} 0 \\ 0 \\ -I \end{bmatrix},
  \qquad W^{-1} := P =\begin{bmatrix} P_{11} & P_{12} & P_{13} \\ P_{21} & P_{22} & P_{23} \\ P_{31} & P_{32} & P_{33} \end{bmatrix}.
\]
It is easy to verify that $\nabla_y G_1(A^\diam, y^\diam)= P_{13}$ and $P_{13} A_1 = I$ by virtue of $P W = I$. The latter equation shows that each row of $P_{13}$ is nonzero, so is each row of $\nabla_y G_1(A^\diam, y^\diam)$. Thus each row of $\nabla_{(A, y)} G_1(A^\diam, y^\diam)$ is nonzero.
%
%
 Hence, for each $i=1,\ldots, N$, $x^*_i(A, y)$ is $C^1$ on the open set $S$, and when  $x^*_i(A^\diam, y^\diam)=0$ at $(A^\diam, y^\diam)\in S$, its gradient is nonzero.
By Lemma~\ref{lem:measure_of_zero_set}, we see that for each $i=1, \ldots, N$, the zero set of $x^*_i(A, y)$ has zero measure. This shows that $| \mbox{supp}(x^*(A, y))| = N$ for almost all $(A, y) \in \mathbb R^{m\times N} \times \mathbb R^m$.
\end{proof}

%
%

The next result addresses the case where $1<p \le 2$. In this case, it can be shown that if $x^*_i$ is vanishing at some $(A^\diam, y^\diam)$ in a certain open set, then the gradient of $x^*_i$ evaluated at $(A^\diam, y^\diam)$ also vanishes. This prevents us from applying Lemma~\ref{lem:measure_of_zero_set} directly. To overcome this difficulty, we introduce a suitable function which has  exactly the same sign of $x^*_i$ and to which Lemma~\ref{lem:measure_of_zero_set} is applicable. This technique is also used in other proofs for $1<p\le 2$; see Theorems~\ref{thm:RR_p}, \ref{thm:EN_p}, \ref{thm:BPDn02_p}, and Proposition~\ref{prop:BPDN_p<2}.

%
%

\begin{proposition} \label{prop:BP_p<2}
  Let $1<p \le 2$ and $N \ge 2m-1$. For almost all $(A, y) \in \mathbb R^{m\times N} \times \mathbb R^m$, the unique optimal solution $x^*_{(A, y)}$ to the BP$_p$ (\ref{eqn:BP_p_norm}) satisfies $|\mbox{supp}(x^*_{(A, y)})| = N$.
\end{proposition}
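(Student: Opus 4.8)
The plan is to mirror the structure of the proof of Proposition~\ref{prop:BP_p>2}, but to replace the (now unavailable) derivative $g'(\cdot)$ by the derivative $h'(\cdot)$ of the inverse function $h$ defined in (\ref{eqn:g_h_def}), which is globally $C^1$ when $1<p\le 2$. The starting point is the same KKT system: for $(A,y)\in S$ the unique optimal solution $x^*$ and its (unique, since $A$ has full row rank) Lagrange multiplier $\nu$ satisfy $g(x^*_i)=(A^T\nu)_i$ for all $i$ together with $Ax^*=y$. Applying $h=g^{-1}$ componentwise, this is equivalent to $x^*_i = h\big((A^T\nu)_i\big)$ for all $i$, and substituting into $Ax^*=y$ gives a reduced system purely in $\nu$: defining $\Phi(\nu,A,y):= A\,\big(h((A^T\nu)_1),\ldots,h((A^T\nu)_N)\big)^T - y = 0$, where $\Phi$ is $C^1$ because $h'$ is continuous everywhere when $1<p\le2$.

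Next I would show $\Phi$ is solvable for $\nu$ as a $C^1$ function of $(A,y)$ near any $(A^\diam,y^\diam)\in S$. The Jacobian is $\mathbf J_\nu\Phi = A\,D(\nu)\,A^T$, where $D(\nu):=\mbox{diag}\big(h'((A^T\nu)_i)\big)_{i=1}^N$ is positive semidefinite with $h'(z)\ge 0$ and $h'(z)=0$ only at $z=0$, i.e.\ exactly at the indices $i\notin\Jcal$ where $x^\diam_i=0$. Here the hypothesis $N\ge 2m-1$ enters: by Proposition~\ref{prop:sparsity_lower_bound}(i), $|\Jcal|\ge N-m+1\ge m$, so $A_{\bullet\Jcal}$ contains an invertible $m\times m$ block; hence $A D(\nu^\diam) A^T = A_{\bullet\Jcal}\,D_\Jcal\,A_{\bullet\Jcal}^T$ with $D_\Jcal$ positive definite, which is positive definite. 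So $\mathbf J_\nu\Phi$ is invertible at $(A^\diam,y^\diam)$, and the implicit function theorem yields a $C^1$ map $\nu=H(A,y)$ locally, hence $x^*_i(A,y)=h\big((A^T H(A,y))_i\big)$ is $C^1$ on the open set $S$.

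The remaining point — and the anticipated main obstacle — is that $x^*_i$ itself is a poor candidate for Lemma~\ref{lem:measure_of_zero_set}: at a point where $x^*_i=0$ we have $(A^T\nu)_i=0$, and since $h'(0)=0$ the chain rule forces $\nabla_{(A,y)} x^*_i=0$ there, so the lemma does not apply to $x^*_i$. The fix, flagged in the paragraph preceding the statement, is to apply Lemma~\ref{lem:measure_of_zero_set} instead to the auxiliary function $\psi_i(A,y):=(A^T H(A,y))_i = g(x^*_i(A,y))$. Since $h$ (equivalently $g$) is a strictly increasing bijection of $\mathbb R$ vanishing only at $0$, $\psi_i$ has exactly the same zero set as $x^*_i$, so it suffices to show $\psi_i$ has nonvanishing gradient wherever it vanishes. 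At such a point $x^\diam_i=0$, so $i\in\Jcal^c$; writing $\psi_i = e_i^T A^T H(A,y)$ (with $e_i$ the $i$th standard basis vector) and differentiating in $y$, $\nabla_y\psi_i = (\nabla_y H)^T A\, e_i$. One then reads off $\nabla_y H$ from the implicit-function identity $A D^\diam A^T\,\nabla_y H = I$, i.e.\ $\nabla_y H = (A D^\diam A^T)^{-1}$, so $\nabla_y\psi_i = (A D^\diam A^T)^{-1} a_i$, which is nonzero because $a_i\ne 0$ (a column of a completely-full-rank matrix) and $(A D^\diam A^T)^{-1}$ is invertible. Hence $\nabla_{(A,y)}\psi_i\ne 0$ whenever $\psi_i=0$, Lemma~\ref{lem:measure_of_zero_set} gives that each zero set $\{\,(A,y): x^*_i(A,y)=0\,\}=\psi_i^{-1}(\{0\})$ has zero measure, and taking the union over $i=1,\ldots,N$ shows $|\mbox{supp}(x^*_{(A,y)})|=N$ for almost all $(A,y)$. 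The delicate bookkeeping is making sure $D^\diam$ restricted to $\Jcal$ is genuinely positive definite — which is exactly where $N\ge 2m-1$ is needed — and correctly identifying $\nabla_y H$ from $PW=I$-type relations as in the $p\ge 2$ proof.
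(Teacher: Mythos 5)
Your proposal is correct and follows the same overall route as the paper's proof: eliminate $x^*$ via $x^*_i=h(a_i^T\nu)$, reduce the KKT system to an equation in $\nu$ alone, show the Jacobian $A\,\mbox{diag}(h'(a_i^T\nu))\,A^T$ is positive definite, invoke the implicit function theorem, and then apply Lemma~\ref{lem:measure_of_zero_set} to $a_i^T\nu$ (which shares its zero set with $x^*_i$) rather than to $x^*_i$ itself. The one place you diverge is the positive-definiteness step: the paper works on a larger set $\wt S\supset S$ and argues by a rank dichotomy on $\Ical=\{i: a_i^Tw=0\}$ using $N\ge 2m-1$, whereas you stay on $S$ from (\ref{eqn:set_S}) and get $|\Jcal|\ge N-m+1\ge m$ from Proposition~\ref{prop:sparsity_lower_bound}(i), so that $A_{\bullet\Jcal}D_{\Jcal}A_{\bullet\Jcal}^T$ is positive definite outright; since $S^c$ also has measure zero this is equally valid and arguably cleaner, though the paper's choice of $\wt S$ is what it later cites in Corollary~\ref{coro:BP_p_A_fixed}. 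One pedantic caveat: at $p=2$ we have $h'\equiv 1/2$, so $h'$ does not vanish at $0$ and the identity $ADA^T=A_{\bullet\Jcal}D_{\Jcal}A_{\bullet\Jcal}^T$ fails as written; but then $D$ is positive definite everywhere and $ADA^T=\frac12 AA^T\succ 0$ trivially, so the conclusion is unaffected.
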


\begin{proof}
  Let $\wt S$ be the set of all $(A, y) \in  \mathbb R^{m\times N}\times \mathbb R^m$ satisfying the following conditions: (i) $y \ne 0$; (ii) each column of $A$ is nonzero; and (iii) for any index set $\Ical \subseteq \{ 1, \ldots, N\}$ with $|\Ical^c|\ge m$ and $\mbox{rank}(A_{\bullet \Ical})<m$, $\mbox{rank}(A_{\bullet \Ical^c})=m$. Note that such $A$ must have full row rank, i.e., $\mbox{rank}(A)=m$. It is easy to see that $\wt S$ is open and its complement $\wt S^c$ has zero measure. Note that the set $S$ given in (\ref{eqn:set_S}) is a proper subset of $\wt S$.

 Let $A=[a_1, \ldots, a_N]$, where $a_i \in \mathbb R^m$ is the $i$th column of $A$. It follows from the KKT condition $\nabla f(x^*) - A^T \nu =0$ that $x^*_i=h(a^T_i \nu)$ for each $i=1, \ldots, N$, where the function $h$ is defined in (\ref{eqn:g_h_def}). Along with the equation $A x^*=y$, we obtain the following equation for $(\nu, A, y)$:
 \[
    F(\nu, A, y ) \, := \, \sum^N_{i=1} a_i h(a^T_i \nu)-y \, = \, 0,
 \]
  where $F:\mathbb R^m \times \mathbb R^{m\times N} \times \mathbb R^m \rightarrow \mathbb R^m$ is $C^1$ and its Jacobian with respect to $\nu$ is
  \[
      \mathbf J_\nu F(\nu, A, y) \, = \, \begin{bmatrix} a_1  & \cdots & a_N \end{bmatrix} \begin{bmatrix} h'(a^T_1 \nu) &  & &  \\ & h'(a^T_2 \nu) & &  \\ & & \ddots &  \\ & & &  h'(a^T_N \nu) \end{bmatrix} \begin{bmatrix} a^T_1 \\ \vdots \\ a^T_N \end{bmatrix} \in \mathbb R^{m \times m}.
  \]
  We show next that for any $(A^\diam, y^\diam)\in \wt S$ with (unique) $\nu$ satisfying $F(\nu, A^\diam, y^\diam)=0$, the Jacobian $Q:=\mathbf J_\nu F(\nu, A^\diam, y^\diam)$ is positive definite.
  To prove this result, we first note that $\nu \ne 0$ since otherwise $x^*=0$ so that $Ax^* =0=y$, contradiction to $y \ne 0$. Using the formula for $h'(\cdot)$ given below (\ref{eqn:g_h_def}),  we have
 \begin{equation} \label{eqn:wT*Q*w}
  w^T Q w \, = \, \sum^{N}_{i=1} (a^T_i w)^2 \cdot h'(a^T_i \nu) \, = \, \frac{1}{(p-1)\cdot p^{1/(p-1)} } \sum^{N}_{i=1} \big(a^T_i w \big)^2 \cdot \big|a^T_i \nu \big|^{\frac{2-p}{p-1}}, \qquad \forall \, w \in \mathbb R^m.
 \end{equation}
 Clearly, $Q$ is positive semi-definite. Suppose, by contradiction, that there exists $w\ne 0$ such that $w^T Q w=0$. Define the index set $\Ical:=\{ i \, | \, a^T_i w =0 \}$. Note that $\mathcal I$ must be nonempty because otherwise, it follows from (\ref{eqn:wT*Q*w}) that $A^T \nu=0$, which contradicts $\mbox{rank}(A)=m$ and $\nu\ne 0$. Similarly, $\Ical^c$ is nonempty in view of $w \ne 0$. Hence we have $(A_{\bullet \Ical})^T w =0$ and $(A_{\bullet \Ical^c})^T \nu =0$. Since $\Ical \cup \Ical^c =\{1, \ldots, N\}$, $\Ical \cap \Ical^c = \emptyset$ and $N \ge 2m-1$, we must have either $|\Ical| \ge m$ or $|\Ical^c| \ge m$. Consider the case of $|\Ical| \ge m$ first. As $(A_{\bullet \Ical^c})^T \nu =0$, we see that $\nu$ is orthogonal to $R(A_{\bullet\Ical^c})$. Since $\nu \in \mathbb R^m$ is nonzero, we obtain $\mbox{rank}(A_{\bullet \Ical^c})<m$. Thus it follows from the properties of $A$ that $\mbox{rank}(A_{\bullet \Ical})=m$, but this contradicts $(A_{\bullet \Ical})^T w=0$ for the nonzero $w$. Using the similar argument, it can be shown that the case of $|\Ical^c| \ge m$ also yields a contradiction. Consequently, $Q$ is positive definite. By the implicit function theorem, there exists a local $C^1$ function $H$ such that $\nu=H(A, y)$ and $F(H(A, y), A, y)=0$ for all $(A, y)$ in a neighborhood of $(A^\diam, y^\diam)$. Let $\nu^\diam:=H(A^\diam, y^\diam)$. Using the chain rule, we have
 \[
    \underbrace{\mathbf J_\nu F(\nu^\diam, A^\diam, y^\diam) }_{:=Q} \cdot \nabla_y H(A^\diam, y^\diam) + \mathbf J_y F(\nu^\diam, A^\diam, y^\diam) = 0.
 \]
Since  $\mathbf J_y F(\nu^\diam, A^\diam, y^\diam) = - I$, we have $\nabla_y H(A^\diam, y^\diam)= Q^{-1}$.

  Observing that $x^*_i=h(a^T_i \nu)$ for each $i=1, \ldots, N$, we deduce via the property of the function $h$ in (\ref{eqn:g_h_def}) that $\mbox{sgn}(x^*_i)=\mbox{sgn}(a^T_i \nu)$ for each $i$. Therefore, in order to show that the zero set of $x^*_i(A, y)$ has zero measure for each $i=1, \ldots, N$, it suffices to show that the zero set of $a^T_i \nu(A, y)$ has zero measure for each $i$. It follows from the previous development that for any $(A^\diam, y^\diam)\in \wt S$, $\nu=H(A, y)$ for a local $C^1$ function $H$ in a neighborhood of $(A^\diam, y^\diam)$. Hence, $\nabla_y  \big( a^T_i \nu \big) (A^\diam, y^\diam) = (a^\diam_i)^T \cdot \nabla_y H(A^\diam, y^\diam) = (a^\diam_i)^T \cdot Q^{-1}$, where $Q:= \mathbf J_\nu F(\nu^\diam, A^\diam, y^\diam)$ is invertible. Since each $a^\diam_i \ne 0$, we have $\nabla_y (a^T_i \nu)(A^\diam, y^\diam) \ne 0$ for each $i=1, \ldots, N$.  In light of Lemma~\ref{lem:measure_of_zero_set}, the zero set of $a^T_i \nu(A, y)$ has zero measure for each $i$. Hence $| \mbox{supp}(x^*(A, y))| = N$ for almost all $(A, y) \in \mathbb R^{m\times N} \times \mathbb R^m$.
\end{proof}

Combining Propositions~\ref{prop:BP_p>2} and \ref{prop:BP_p<2}, we obtain the following result for the generalized basis pursuit.

\begin{theorem} \label{thm:BP_p}
  Let $p>1$ and $N \ge 2m-1$. For almost all $(A, y) \in \mathbb R^{m\times N} \times \mathbb R^m$, the unique optimal solution $x^*_{(A, y)}$ to the BP$_p$ (\ref{eqn:BP_p_norm}) satisfies $|\mbox{supp}(x^*_{(A, y)})| = N$.
\end{theorem}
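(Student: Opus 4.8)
The plan is to prove the theorem by a case distinction on the exponent, reducing it entirely to the two propositions already established. Fix $p>1$ and assume $N \ge 2m-1$; in particular $N \ge m$. By Proposition~\ref{prop:solution_existence_p_norm}, for every $(A,y)$ with $y \in R(A)$ the problem BP$_p$ (\ref{eqn:BP_p_norm}) has a unique optimal solution, so $x^*_{(A,y)}$ is well defined. I would then split into two cases. If $p \ge 2$, Proposition~\ref{prop:BP_p>2} applies, since its hypothesis $N \ge m$ is met, and it yields $|\mbox{supp}(x^*_{(A,y)})| = N$ for all $(A,y) \in \mathbb R^{m\times N} \times \mathbb R^m$ outside a set of Lebesgue measure zero. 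If $1 < p \le 2$, Proposition~\ref{prop:BP_p<2} applies, since its hypothesis $N \ge 2m-1$ is exactly our assumption, and it gives the same conclusion. Since $[2,\infty) \cup (1,2] = (1,\infty)$, these two cases exhaust all $p>1$, and the theorem follows.

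The only point that merits a word of care is that the measure-zero exceptional set produced by each proposition depends on $p$ (and, in the second case, implicitly on $m$ and $N$); but $p$ is fixed at the outset, so no uncountable union of null sets is formed and the ``almost all'' conclusion is preserved. It is also worth noting that each proposition already delivers its conclusion in precisely the form required here, namely full support of the unique minimizer for almost every $(A,y)$ with respect to Lebesgue measure on $\mathbb R^{m\times N}\times\mathbb R^m$, so no reformulation is needed before assembling the two cases.

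Because both building blocks are in hand, there is essentially no obstacle left at the level of this theorem; the genuine difficulty lives inside Propositions~\ref{prop:BP_p>2} and~\ref{prop:BP_p<2}. For $p\ge2$ the argument writes the KKT system $F(x,\nu,A,y)=0$ for the pair (solution, multiplier), shows its Jacobian $\mathbf J_{(x,\nu)}F$ at a completely-full-rank $(A,y)$ is invertible using $|\Jcal^c|\le m-1$ from Proposition~\ref{prop:sparsity_lower_bound}, applies the implicit function theorem, and extracts from the identity $PW=I$ that each row of $\nabla_y G_1$ is nonzero, so that Lemma~\ref{lem:measure_of_zero_set} applies componentwise. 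For $1<p\le2$ one instead eliminates $x^*$ via $x^*_i = h(a_i^T\nu)$, works with the single equation $\sum_i a_i\, h(a_i^T\nu) = y$, proves positive definiteness of the reduced Jacobian $\mathbf J_\nu F$ — and it is exactly here that $N\ge 2m-1$ is used, to force either $|\Ical|\ge m$ or $|\Ical^c|\ge m$ and reach a contradiction with the structure of $A$ — and then, since $\nabla_y x^*_i$ may vanish where $x^*_i$ does, applies Lemma~\ref{lem:measure_of_zero_set} to the sign-equivalent function $a_i^T\nu(A,y)$ rather than to $x^*_i$ itself. For the present statement I would therefore simply record the case split $p\ge 2$ versus $1<p\le 2$ and cite these two propositions.
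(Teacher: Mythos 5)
Your proposal is correct and matches the paper exactly: the paper proves Theorem~\ref{thm:BP_p} precisely by combining Proposition~\ref{prop:BP_p>2} (for $p\ge 2$) with Proposition~\ref{prop:BP_p<2} (for $1<p\le 2$), the two cases covering all $p>1$ under the hypothesis $N\ge 2m-1$. Your additional remarks on the $p$-dependence of the null sets and on where $N\ge 2m-1$ enters are accurate but not needed beyond the citation of the two propositions.
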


Motivated by Theorem~\ref{thm:BP_p}, we present the following corollary for a certain fixed measurement matrix $A$ whereas the measurement vector $y$ varies. This result will be used for Theorem~\ref{thm:extension_BP_p} in Section~\ref{sect:comparison}.

\begin{corollary} \label{coro:BP_p_A_fixed}
  Let $p>1$ and $N \ge 2m-1$. Let $A$ be a fixed $m\times N$ matrix such that any of its $m\times m$ submatrix is invertible.  For almost all $y \in \mathbb R^m$, the unique optimal solution $x^*_y$ to the BP$_p$ (\ref{eqn:BP_p_norm}) satisfies $|\mbox{supp}(x^*_{y})| = N$.
\end{corollary}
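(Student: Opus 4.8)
The plan is to repeat the arguments of Propositions~\ref{prop:BP_p>2} and \ref{prop:BP_p<2}, but with the fixed matrix $A$ held constant and only $y$ varying, so that all objects become functions of $y \in \mathbb R^m$ alone. The two key observations that make this work are already embedded in those proofs: (i) the sparsity lower bound of Proposition~\ref{prop:sparsity_lower_bound}(i) applies to every $y \ne 0$ as long as every $m \times m$ submatrix of $A$ is invertible (which is the standing hypothesis on $A$ here), so for each such $y$ the index set $\Jcal = \{ i \mid x^*_i(y) \ne 0 \}$ has $|\Jcal^c| \le m-1$; and (ii) in both proofs the crucial nonvanishing was established for the $y$-partial derivatives, namely $\nabla_y G_1(A^\diam, y^\diam) = P_{13}$ with $P_{13} A_1 = I$ (the case $p \ge 2$), and $\nabla_y H(A^\diam, y^\diam) = Q^{-1}$ (the case $1 < p \le 2$). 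Since those computations only ever used invertibility of the relevant submatrices of the \emph{fixed} $A$ and $y \ne 0$, they carry over verbatim.

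Concretely, I would first treat $p \ge 2$. Fix the given $A$ and let $y^\diam \ne 0$. As in Proposition~\ref{prop:BP_p>2}, the KKT system $F(x, \nu, y) := (\nabla f(x) - A^T \nu;\ Ax - y) = 0$ has a $C^1$ solution map $y \mapsto (x^*(y), \nu(y))$ near $y^\diam$ because the Jacobian $W = \mathbf J_{(x,\nu)} F$ is invertible — and the invertibility proof used only that any $m \times m$ submatrix of $A$ is invertible and that $\Lambda^\diam_2$ is positive definite, both of which hold. Differentiating $F(x^*(y), \nu(y), y) = 0$ in $y$ gives $W \cdot (\nabla_y x^*; \nabla_y \nu) = (0; 0; I)$, whence $\nabla_y x^*_{\Jcal^c}(y^\diam) = P_{13}$ and $P_{13} A_1 = I$, so each row of $\nabla_y x^*_{\Jcal^c}(y^\diam)$ is nonzero. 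Thus each $x^*_i(y)$ is $C^1$ on the open set $\mathbb R^m \setminus \{0\}$, and wherever $x^*_i$ vanishes its $y$-gradient is nonzero; Lemma~\ref{lem:measure_of_zero_set} (applied on $W = \mathbb R^m \setminus \{0\}$, whose complement is a single point, hence of measure zero) shows each zero set $(x^*_i)^{-1}(\{0\})$ has measure zero, and the finite union over $i = 1, \ldots, N$ does too.

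For $1 < p \le 2$, I would mimic Proposition~\ref{prop:BP_p<2}: since $A$ (being fixed with all $m \times m$ submatrices invertible) certainly has full row rank, one reduces to the equation $F(\nu, y) := \sum_{i=1}^N a_i h(a_i^T \nu) - y = 0$ in $\nu$ alone, with $\mathbf J_\nu F = Q$ positive definite by the same argument (which needs $N \ge 2m-1$, the rank conditions on submatrices of $A$, and $\nu \ne 0$, the last forced by $y \ne 0$). The implicit function theorem gives $\nu = H(y)$ locally with $\nabla_y H(y^\diam) = Q^{-1}$, so $\nabla_y(a_i^T \nu)(y^\diam) = a_i^T Q^{-1} \ne 0$ since $a_i \ne 0$ and $Q^{-1}$ is invertible; using $\sgn(x^*_i) = \sgn(a_i^T \nu)$ together with Lemma~\ref{lem:measure_of_zero_set}, each zero set of $a_i^T \nu(y)$, and hence of $x^*_i(y)$, has measure zero. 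Combining the two ranges of $p$ yields the claim. I do not anticipate a genuine obstacle here — the only thing to be careful about is that the relevant open set on which the IFT applies is now $\mathbb R^m \setminus \{0\} \subseteq \mathbb R^m$ rather than a subset of $\mathbb R^{m\times N}\times\mathbb R^m$, and that its complement $\{0\}$ has Lebesgue measure zero in $\mathbb R^m$, so Lemma~\ref{lem:measure_of_zero_set} still applies.
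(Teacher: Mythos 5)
Your proposal is correct and follows essentially the same route as the paper: the paper's proof of this corollary likewise just observes that the arguments of Propositions~\ref{prop:BP_p>2} and \ref{prop:BP_p<2} only ever established nonvanishing of the $y$-partial derivatives (via $P_{13}A_1=I$, resp.\ $\nabla_y H = Q^{-1}$) and only used properties of $A$ guaranteed by the hypothesis (membership of the fixed $A$ in the conditions defining $S$, hence $\wt S$), so everything restricts to the open set $\mathbb R^m\setminus\{0\}$ and Lemma~\ref{lem:measure_of_zero_set} applies. No issues.
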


\begin{proof}
Consider $p \ge 2$ first. For any $y \in \mathbb R^m$, let $x^*(y)$ be the unique optimal solution to the BP$_p$ (\ref{eqn:BP_p_norm}). It follows from the similar argument for Propositions~\ref{prop:BP_p>2} that for  each $i=1, \ldots, N$, $x^*_i$ is a $C^1$ function of $y$ on $\mathbb R^m \setminus \{ 0 \}$, and that  if $x^*_i(y)=0$ for any $y \ne 0$, then the gradient $\nabla_y x^*_i(y) \ne 0$. By Lemma~\ref{lem:measure_of_zero_set}, $| \mbox{supp}(x^*(y))| = N$ for almost all $y \in \mathbb R^m$. When $1< p \le 2$, we note that the given matrix $A$ satisfies the required conditions on $A$ in the set $\wt S$ introduced in the proof of Proposition~\ref{prop:BP_p<2}, since $S$ defined in (\ref{eqn:set_S}) is a proper subset of $\wt S$ as indicated at the end of the first paragraph of the proof of Proposition~\ref{prop:BP_p<2}. Therefore, by the similar argument for  Proposition~\ref{prop:BP_p<2}, we have that for any $y \ne 0$, the gradient $\nabla_y x^*_i(y) \ne 0$. Therefore, the desired result follows.
\end{proof}

%
\subsection{Least Sparsity of  the Generalized Ridge Regression and Generalized Elastic Net with $p>1$}

We first establish the least sparsity of the generalized ridge regression in (\ref{eqn:ridge_reg}) as follows.

\begin{theorem} \label{thm:RR_p}
 Let $p > 1$, $N \ge m$, and $\lambda>0$. For almost all $(A, y) \in \mathbb R^{m\times N} \times \mathbb R^m$, the unique optimal solution $x^*_{(A, y)}$ to the RR$_p$ (\ref{eqn:ridge_reg}) satisfies $| \mbox{supp}(x^*_{(A, y)})| = N$.
\end{theorem}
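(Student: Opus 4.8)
\textbf{Proof proposal for Theorem~\ref{thm:RR_p}.}
The plan is to follow the same two-case strategy used for the generalized basis pursuit (Propositions~\ref{prop:BP_p>2} and~\ref{prop:BP_p<2}), combined with Lemma~\ref{lem:measure_of_zero_set} and the sparsity lower bound of Proposition~\ref{prop:sparsity_lower_bound}(iii). The optimality condition for the unique optimal solution $x^*$ of the RR$_p$ (\ref{eqn:ridge_reg}) is $A^T(Ax^*-y)+\lambda\,\nabla f(x^*)=0$, i.e. $A^TAx^* - A^Ty + \lambda\big(g(x^*_1),\ldots,g(x^*_N)\big)^T=0$, with $g$ and $h$ as in (\ref{eqn:g_h_def}). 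Unlike the BP$_p$ case there is no Lagrange multiplier, so the implicit function theorem will be applied directly to $x^*$ as a function of $(A,y)$. First I would fix an appropriate open set of full measure: for the case $p\ge 2$, take $S$ as in (\ref{eqn:set_S}) (so every $m\times m$ submatrix of $A$ is invertible and $y\neq 0$); for the case $1<p\le 2$, take the larger set $\wt S$ introduced in the proof of Proposition~\ref{prop:BP_p<2} (full row rank of $A$, nonzero columns, and the rank condition on complementary column subsets). Proposition~\ref{prop:sparsity_lower_bound}(iii) guarantees $|\mathrm{supp}(x^*)|\ge N-m+1$ on $S$, so the vanishing index set $\Jcal^c:=\{i:x^\diam_i=0\}$ always satisfies $|\Jcal^c|\le m-1$; this is the structural fact that will make the relevant Jacobian invertible.

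For $p\ge 2$: define $F(x,A,y):=A^TAx - A^Ty + \lambda\big(g(x_1),\ldots,g(x_N)\big)^T$, which is $C^1$ since $g$ is $C^1$ when $p\ge 2$. Its Jacobian in $x$ is $\mathbf J_xF = A^TA + \lambda\,\Lambda(x)$ with $\Lambda(x)=\mathrm{diag}(g'(x_1),\ldots,g'(x_N))\succcurlyeq 0$. At a point $(A^\diam,y^\diam)\in S$ this matrix is positive definite: for any $w\ne 0$, $w^T(A^TA+\lambda\Lambda^\diam)w = \|Aw\|_2^2 + \lambda\sum_i g'(x^\diam_i)w_i^2$; if this vanished then $Aw=0$ and $g'(x^\diam_i)w_i^2=0$ for all $i$, so $w$ is supported on $\Jcal^c$ (where $g'$ vanishes), giving $A_{\bullet\Jcal^c}w_{\Jcal^c}=0$ with $|\Jcal^c|\le m-1$ — contradicting that the columns of $A_{\bullet\Jcal^c}$ are linearly independent. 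Hence the implicit function theorem gives a local $C^1$ map $x^*=G(A,y)$, and differentiating $F(G(A,y),A,y)=0$ in $y$ yields $(A^TA+\lambda\Lambda^\diam)\,\nabla_y G(A^\diam,y^\diam) = A^T$. I then need: if $x^*_i(A^\diam,y^\diam)=0$ then row $i$ of $\nabla_y G$ is nonzero. Since $(A^TA+\lambda\Lambda^\diam)^{-1}A^T$ has the property that multiplying it on the left by $A^TA+\lambda\Lambda^\diam$ returns $A^T$, whose rows are all nonzero (columns of $A$ nonzero), no row of $\nabla_y G$ can be zero; equivalently, the $i$th row of $\nabla_y G$ being zero would force the $i$th row of $A^T$ (i.e. $a_i^T$) to be zero. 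Thus each $x^*_i$ is $C^1$ on $S$ with nonvanishing gradient on its zero set, and Lemma~\ref{lem:measure_of_zero_set} finishes this case.

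For $1<p\le 2$: $g$ is not $C^1$ at $0$, so instead I would invert the optimality relation componentwise via $h=g^{-1}$. Writing $\nu := -\tfrac{1}{\lambda}(Ax^*-y)\in\mathbb R^m$ (the residual scaled), the optimality condition $\lambda\,\nabla f(x^*) = A^T(y-Ax^*) = \lambda A^T\nu$ gives $g(x^*_i)=a_i^T\nu$, hence $x^*_i=h(a_i^T\nu)$; substituting into $\nu = -\tfrac1\lambda(Ax^*-y)$ produces a closed equation in $(\nu,A,y)$: $F(\nu,A,y):=\lambda\nu + \sum_{i=1}^N a_i\,h(a_i^T\nu) - y = 0$, with $F$ being $C^1$ since $h'$ is globally defined when $1<p\le 2$. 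Its Jacobian in $\nu$ is $\mathbf J_\nu F = \lambda I + \sum_i h'(a_i^T\nu)\,a_ia_i^T \succcurlyeq \lambda I \succ 0$, so it is \emph{unconditionally} positive definite — this is the place where the ridge penalty makes the argument cleaner than in Proposition~\ref{prop:BP_p<2}, and I do not even need the full strength of the $\wt S$ rank conditions here, only nonzero columns and $y\neq 0$ (plus full row rank to ensure uniqueness is consistent). The implicit function theorem gives $\nu=H(A,y)$ locally $C^1$ and $\nabla_y H(A^\diam,y^\diam) = (\mathbf J_\nu F)^{-1}$, which is invertible. Since $\mathrm{sgn}(x^*_i)=\mathrm{sgn}(h(a_i^T\nu))=\mathrm{sgn}(a_i^T\nu)$, the zero set of $x^*_i$ equals the zero set of $a_i^T\nu(A,y)$, whose gradient in $y$ is $(a^\diam_i)^T(\mathbf J_\nu F)^{-1}\ne 0$ because $a^\diam_i\ne 0$ and $(\mathbf J_\nu F)^{-1}$ is nonsingular. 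Lemma~\ref{lem:measure_of_zero_set} then shows each such zero set has zero measure, and the union over $i=1,\ldots,N$ still has zero measure, giving $|\mathrm{supp}(x^*)|=N$ for almost all $(A,y)$.

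The main obstacle I anticipate is not the positive-definiteness of the Jacobians — the ridge term $\lambda I$ or $\lambda\Lambda$ makes those comparatively painless — but rather the bookkeeping needed to pass from ``$\nabla_y$(component) $\ne 0$'' to ``$\nabla_{(A,y)}$(component) $\ne 0$'' cleanly, and, in the $p\ge2$ branch, making sure that the argument extracting a nonzero row of $(A^TA+\lambda\Lambda^\diam)^{-1}A^T$ is airtight (the cleanest route is the identity $(A^TA+\lambda\Lambda^\diam)\nabla_yG = A^T$ together with $a_i\neq 0$, avoiding any explicit inversion). A secondary point is confirming that, in the $1<p\le2$ case, the substitution $x^*_i=h(a_i^T\nu)$ together with the residual equation is genuinely equivalent to the original optimality system (no spurious or lost solutions), which follows because $h=g^{-1}$ is a bijection on $\mathbb R$ and $x^*$ is unique by Proposition~\ref{prop:solution_existence_p_norm}.
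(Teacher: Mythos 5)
Your overall strategy is the right one and your $1<p\le 2$ branch is sound, but there is a genuine gap in the final step of the $p\ge 2$ branch. From $(A^TA+\lambda\Lambda^\diam)\,\nabla_yG = A^T$ you conclude that ``the $i$th row of $\nabla_yG$ being zero would force the $i$th row of $A^T$ to be zero.'' That implication is false: writing $M:=A^TA+\lambda\Lambda^\diam$, the identity $M\,\nabla_yG=A^T$ says the $i$th row of $A^T$ equals $\sum_j M_{ij}(\nabla_yG)_{j\bullet}$, which mixes \emph{all} rows of $\nabla_yG$; in general $M^{-1}B$ can have a zero row even when $B$ has none (take $B=MC$ with $C$ having a zero row and $M$ non-diagonal). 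Your argument never uses the one structural fact that makes the conclusion true, namely $g'(x^\diam_i)=g'(0)=0$ for $i\in\Jcal^c$ when $p>2$, i.e.\ $\Lambda_1=0$. The repair is short: if row $i$ of $\nabla_yG=M^{-1}A^T$ vanished for some $i\in\Jcal^c$, set $w:=M^{-1}e_i$; then $Aw=\big(e_i^TM^{-1}A^T\big)^T=0$, so $Mw=e_i$ collapses to $\lambda\Lambda^\diam w=e_i$, whose $i$th component reads $\lambda g'(0)w_i=1$, a contradiction. (The paper reaches the same conclusion by partitioning $M$ and showing $\nabla_yG_1\cdot A_{\bullet\Jcal^c}=I$, which uses $\Lambda_1=0$ together with the linear independence of the columns of $A_{\bullet\Jcal^c}$ guaranteed by $|\Jcal^c|\le m-1$.) Note also that your branch (i) nominally includes $p=2$, where $g'(0)=2\ne 0$ so $\Lambda_1\ne 0$ and the above argument instead yields $w=e_i/(2\lambda)$ and requires $a_i\ne 0$; this is harmless since your branch (ii) covers $p=2$, but it confirms that the two sub-cases genuinely need different reasoning and that ``$a_i\ne 0$'' alone is not the operative hypothesis when $p>2$.

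Your $1<p\le 2$ branch is correct and takes a genuinely different route from the paper's. You eliminate $x$ entirely and solve the $m$-dimensional strictly monotone equation $\lambda\nu+\sum_i a_i\,h(a_i^T\nu)=y$ for the scaled residual $\nu$, whose Jacobian $\lambda I+\sum_ih'(a_i^T\nu)\,a_ia_i^T\succcurlyeq\lambda I$ is unconditionally positive definite; the paper instead keeps the $N$-dimensional fixed-point form $x_i+h(\lambda^{-1}a_i^T(Ax-y))=0$, whose Jacobian is invertible only via its block-triangular structure, and then needs the Sherman--Morrison--Woodbury formula to show $\nabla_yq_i=(a^\diam_i)^T(I+\lambda^{-1}A_2\Gamma_2A_2^T)^{-1}\ne 0$. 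Your reduction (the analogue of the paper's treatment of BP$_p$ for $1<p\le 2$ in Proposition~\ref{prop:BP_p<2}) buys a cleaner invertibility argument and dispenses with the partition into $\Jcal$ and $\Jcal^c$ in this branch; the equivalence of the reduced equation with the original optimality system, which you rightly flag, is immediate from the strict monotonicity of $\nu\mapsto\lambda\nu+\sum_ia_ih(a_i^T\nu)$ together with the uniqueness of $x^*$.
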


\begin{proof}
 Recall that for any given $(A, y) \in \mathbb R^{m\times N} \times \mathbb R^m$, the unique optimal solution $x^*$ to the RR$_p$ (\ref{eqn:ridge_reg}) is described by the optimality condition:  $A^T(Ax^*-y) + \lambda \, \nabla f(x^*)=0$, where $\lambda>0$ is a penalty parameter.

 (i) $p>2$. Define the function $F(x, A, y) := \lambda \nabla f(x) + A^T (A x - y)$, where $\nabla f(x) = (g(x_1), \ldots, g(x_N))^T$ with $g$ given in (\ref{eqn:g_h_def}). Hence, the optimal solution $x^*$, as the function of $(A, y)$, satisfies the equation $F(x^*, A, y)=0$. Obviously, $F$ is $C^1$ and its Jacobian with respect to $x$ is given by
 \[
   \mathbf J_x F(x, A, y) \, = \, \lambda D(x) + A^T A,
 \]
 where the diagonal matrix $D(x) := \mbox{diag}( g'(x_1), \ldots, g'(x_N))$.  Since each $g'(x_i)\ge 0$, we see that $\lambda D(x) + A^T A$ is positive semi-definite for all $A$'s and $x$'s.

 We show below that for any $(A^\diam, y^\diam)$ in the set $S$ defined in (\ref{eqn:set_S}), the matrix $\mathbf J_x F(x^\diam, A^\diam, y^\diam)$ is positive definite, where $x^\diam:=x^*(A^\diam, y^\diam)$.
For this purpose, define the index set $\Jcal := \{ i \, | \, x^\diam_i \ne 0 \}$. Partition $D^\diam:=D(x^\diam)$ and $A^\diam$ as $D^\diam= \mbox{diag}( D_1, \,  D_2)$ and $A^\diam = \big [ A_1 \ A_2 \big]$ respectively, where $D_{1} := \mbox{diag}( g'(x^\diam_i) )_{i\in \Jcal^c}=0$, $D_{2} := \mbox{diag}( g'(x^\diam_i) )_{i\in \Jcal}$ is positive definite, $A_1:=A^\diam_{\bullet \Jcal^c}$, and $A_2:=A^\diam_{\bullet \Jcal}$. It follows from Proposition~\ref{prop:sparsity_lower_bound} that $|\Jcal^c| \le m-1$ such that the columns of $A_1$ are linearly independent.
Suppose there exists a vector $z\in \mathbb R^N$ such that $z^T [ \lambda D^\diam + (A^\diam)^T A^\diam ] z =0$. Let $u:=z_{\Jcal^c}$ and $v:=z_\Jcal$. Since $z^T [ \lambda D^\diam + (A^\diam)^T A^\diam ] z \ge z^T \lambda D^\diam z = \lambda v^T D_2 v \ge 0$ and $D_2$ is positive definite, we have $v=0$. Hence, $z^T [\lambda D^\diam + (A^\diam)^T A^\diam] z \ge z^T (A^\diam)^T A^\diam z = \| A^\diam z \|^2_2 = \| A_1 u \|^2_2 \ge 0$. Since the columns of $A_1$ are linearly independent, we have $u=0$ and thus $z=0$. Therefore, $\mathbf J_x F(x^\diam, A^\diam, y^\diam)=\lambda D^\diam + (A^\diam)^T A^\diam$ is positive definite.

By the implicit function theorem, there are local $C^1$ functions $G_1$ and $G_2$ such that $x^*=(x^*_\Jcal, x^*_{\Jcal^c})= (G_1(A, y), G_2(A, y)):= G(A, y)$ and $F(G(A,y), A, y)=0$ for all $(A, y)$ in a neighborhood of $(A^\diam, y^\diam) \in S$. By the chain rule, we have
\[
  \mathbf J_x F(x^\diam, A^\diam, y^\diam) \cdot \begin{bmatrix} \nabla_y G_1(A^\diam, y^\diam) \\ \nabla_y G_2(A^\diam, y^\diam) \end{bmatrix} +  \mathbf J_y F(x^\diam, A^\diam, y^\diam) \, = \, 0,
\]
where $\mathbf J_y F(x^\diam, A^\diam, y^\diam) = - (A^\diam)^T= -[A_1 \ A_2]^T$. Let $P$ be the inverse of $\mathbf J_x F(x^\diam, A^\diam, y^\diam)$, i.e.,
\[
   P = \begin{bmatrix} P_{11} & P_{12} \\ P_{21} & P_{22} \end{bmatrix} \, = \, \begin{bmatrix} \lambda D_1 +  A^T_1 A_1 &  A^T_1 A_2 \\    A^T_2 A_1 & \lambda D_2 +  A^T_2 A_2 \end{bmatrix}^{-1}.
\]
Since $D_1=0$, we obtain  $P_{11} A^T_1 A_1 + P_{12} A^T_2 A_1 = I$. Further, since $\nabla_y G_1(A^\diam, y^\diam) = P_{11} A^T_1 + P_{12} A^T_2$, we have $\nabla_y G_1(A^\diam, y^\diam)\cdot A_1 =I$. This shows that each row of $\nabla_y G_1(A^\diam, y^\diam)$ is nonzero or equivalently the gradient of $x^*_i(A, y)$ is nonzero at $(A^\diam, y^\diam)$ for each $i\in \Jcal^c$. By virtue of Lemma~\ref{lem:measure_of_zero_set}, $| \mbox{supp}(x^*(A, y))| = N$ for almost all $(A, y) \in \mathbb R^{m\times N} \times \mathbb R^m$.


(ii) $1<p\le 2$.  Let $\wh S$ be the set of all $(A, y) \in \mathbb R^{m\times N}\times \mathbb R^m$ such that each column of $A$ is nonzero and $A^T y \ne 0$. Obviously, $\wh S$ is open and its complement has zero measure. Further, for any $(A, y) \in \wh S$, it follows from the optimality condition and $A^T y \ne 0$ that the unique optimal solution $x^* \ne 0$.

Define the function
\[
 F(x, A, y) \, := \, \begin{bmatrix} x_1+ h(\lambda^{-1} a^T_1 (Ax-y)) \\ \vdots \\ x_N + h(\lambda^{-1} a^T_N(Ax-y) ) \end{bmatrix},
\]
  where $h$ is defined in (\ref{eqn:g_h_def}). For any $(A, y) \in \mathbb R^{m\times N}\times \mathbb R^m$, the unique optimal solution $x^*$, as the function of $(A, y)$, satisfies $F(x^*, A, y)=0$. Further, $F$ is $C^1$ and its Jacobian with respect to $x$ is
\[
  \mathbf J_x F(x, A, y) \, = \, I + \lambda^{-1} \cdot \Gamma(x, A, y) A^T A,
\]
where the diagonal matrix $\Gamma(x, A, y)=\mbox{diag}\big( h'(\lambda^{-1} a^T_1 (Ax-y)), \ldots, h'(\lambda^{-1} a^T_N(Ax-y) ) \big)$.

We show next that for any $(A^\diam, y^\diam)\in \wh S$, the matrix $\mathbf J_x F(x^\diam, A^\diam, y^\diam)$ is invertible, where $x^\diam:=x^*(A^\diam, y^\diam)$.
As before, define the index set $\Jcal := \{ i \, | \, x^\diam_i \ne 0 \}$. Since $(A^\diam, y^\diam)\in \wh S$ implies that $x^\diam \ne 0$, the set $\Jcal$ is nonempty.
 Partition $\Gamma^\diam:=\Gamma(x^\diam, A^\diam, y^\diam)$ and $A^\diam$ as $\Gamma^\diam= \mbox{diag}( \Gamma_1, \,  \Gamma_2)$ and $A^\diam = \big [ A_1 \ A_2 \big]$ respectively, where $\Gamma_{1} := \mbox{diag}( h'(\lambda^{-1} (a^\diam_i)^T (A^\diam x^\diam -y^\diam) ) )_{i\in \Jcal^c}=0$, $\Gamma_{2} := \mbox{diag}(  h'(\lambda^{-1}(a^\diam_i)^T (A^\diam x^\diam -y^\diam) ) )_{i\in \Jcal}$ is positive definite, $a^\diam_i$ is the $i$th column of $A^\diam$, $A_1:=A^\diam_{\bullet \Jcal^c}$, and $A_2:=A^\diam_{\bullet \Jcal}$. Therefore, we obtain
\begin{equation} \label{eqn:Jacobian_RR}
 \mathbf J_x F(x^\diamond, A^\diam, y^\diam) \, = \, \begin{bmatrix} I & 0 \\  \lambda^{-1} \Gamma_2 A^T_2 A_{1} & I + \lambda^{-1} \Gamma_2 A^T_2 A_{2} \end{bmatrix}.
\end{equation}
Since $\Gamma_2$ is positive definite, we deduce that $I + \lambda^{-1} \Gamma_2 A^T_2 A_{2}= \Gamma_2 ( \Gamma^{-1}_2 + \lambda^{-1} A^T_2 A_2)$ is invertible. Hence  $\mathbf J_x F(x^\diamond, A^\diam, y^\diam)$ is invertible.
By the implicit function theorem, there are local $C^1$ functions $G_1$ and $G_2$ such that $x^*=(x^*_\Jcal, x^*_{\Jcal^c})= (G_1(A, y), G_2(A, y)):= G(A, y)$ and $F(G(A,y), A, y)=0$ for all $(A, y)$ in a neighborhood of $(A^\diam, y^\diam)\in\mathbb R^{m\times N}\times \mathbb R^m$. By the chain rule, we have
\[
   \mathbf J_x F(x^\diamond, A^\diam, y^\diam)  \cdot \begin{bmatrix} \nabla_y G_1(A^\diam, y^\diam) \\ \nabla_y  G_2(A^\diam, y^\diam) \end{bmatrix} \, = \, - \mathbf J_y F(x^\diamond, A^\diam, y^\diam) \, = \, \begin{bmatrix}  \Gamma_1 \,  \lambda^{-1} A^T_1 \\ \Gamma_2 \,  \lambda^{-1} A^T_2  \end{bmatrix} \in \mathbb R^{N\times m}.
\]
 In view of (\ref{eqn:Jacobian_RR}), $\Gamma_1=0$, and the invertibility of $\mathbf J_x F(x^\diamond, A^\diam, y^\diam)$, we obtain $\nabla_y G_1(A^\diam, y^\diam)=0$ and $\nabla_y G_2(A^\diam, y^\diam)= ( I + \lambda^{-1} \Gamma_2 A^T_2 A_{2})^{-1} \Gamma_2  \lambda^{-1} A^T_2 $.

 Noting that $x^*_i= - h(\lambda^{-1} a^T_i (A x^* - y))$ for each $i=1, \ldots, N$, we deduce via the property of the function $h$ in (\ref{eqn:g_h_def}) that $\mbox{sgn}(x^*_i)=\mbox{sgn}( a^T_i (y- A x^*))$ for each $i$. Therefore, it suffices to show that the zero set of $a^T_i (y-A x^*)$ has zero measure for each $i=1, \ldots, N$.
  It follows from the previous development that for any $(A^\diam, y^\diam) \in \wh S$, $(x^*_{\Jcal^c}, x^*_\Jcal)=(G_1(A, y), G_2(A, y))$ in a neighborhood of $(A^\diam, y^\diam)$ for local $C^1$ functions $G_1$ and $G_2$.
 %
 %
  For each $i\in \Jcal^c$, define
  \[
    q_i(A, y) \, := \, a^T_i \big(y- A \cdot x^*(A, y) \big).
  \]
     Then $\nabla_y \, q_i(A^\diam, y^\diam)=(a^\diam_i)^T (I-A_2 \cdot \nabla_y G_2(A^\diam, y^\diam))$. Note that by the Sherman-Morrison-Woodbury formula \cite[Section 3.8]{Meyer_book00}, we have
  \[
    A_2 \cdot \nabla_y G_2(A^\diam, y^\diam) \, = \,  A_2 ( I + \lambda^{-1} \Gamma_2 A^T_2 A_{2})^{-1} \Gamma_2  \lambda^{-1} A^T_2 \, = \, I - (I+ \lambda^{-1} A_2 \Gamma_2 A^T_2)^{-1},
  \]
 where it is easy to see that $I+ \lambda^{-1} A_2 \Gamma_2 A^T_2$ is invertible.
 Hence, for any $(A^\diam, y^\diam) \in \wh S$, we deduce via $a^\diam_i \ne 0$ that $\nabla_y \, q_i(A^\diam, y^\diam) =  (a^\diam_i)^T (I+ \lambda^{-1} A_2 \Gamma_2 A^T_2)^{-1} \ne 0$ for each $i \in \Jcal^c$. In view of Lemma~\ref{lem:measure_of_zero_set}, $| \mbox{supp}(x^*(A, y))| = N$ for almost all $(A, y) \in \mathbb R^{m\times N} \times \mathbb R^m$.
\end{proof}

The next result pertains to the generalized elastic net (\ref{eqn:elastic_net}).

\begin{theorem} \label{thm:EN_p}
 Let $p > 1$, $N \ge m$, $r \ge 1$, and $\lambda_1, \lambda_2>0$. For almost all $(A, y) \in \mathbb R^{m\times N} \times \mathbb R^m$, the unique optimal solution $x^*_{(A, y)}$ to the EN$_p$ (\ref{eqn:elastic_net})
 satisfies $| \mbox{supp}(x^*_{(A, y)})| = N$.
\end{theorem}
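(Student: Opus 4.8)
The plan is to mirror the proof of Theorem~\ref{thm:RR_p}, since the EN$_p$ objective $\frac{1}{2}\|Ax-y\|_2^2 + \lambda_1\|x\|_p^r + \lambda_2\|x\|_2^2$ reduces to the ridge regression case when the $\ell_2$-penalty term is merged into the quadratic part. First I would write down the optimality condition for the unique optimal solution $x^*(A,y)$ (uniqueness comes from Proposition~\ref{prop:solution_existence_p_norm} since $p\ge 1$, $r\ge 1$, $\lambda_1,\lambda_2>0$): in the equivalent form analogous to (\ref{eqn:EN_optimality}), namely $p\,A^T(Ax^*-y) + r\lambda_1\|x^*\|_p^{r-p}\nabla f(x^*) + 2p\lambda_2 x^* = 0$, with $\nabla f(x)=(g(x_1),\dots,g(x_N))^T$. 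By Proposition~\ref{prop:sparsity_lower_bound}(iv) we already know $x^*\ne 0$ on a set of full measure (in fact whenever $A$ has full row rank and $A^Ty\ne 0$, which follows from the optimality condition), so the scalar $\|x^*\|_p^{r-p}$ and hence the whole equation is well behaved near such points.

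The split into $p\ge 2$ and $1<p\le 2$ would proceed exactly as in Theorem~\ref{thm:RR_p}. For $p\ge 2$: define $F(x,A,y):= p\,A^T(Ax-y) + r\lambda_1\|x\|_p^{r-p}\nabla f(x) + 2p\lambda_2 x$, which is $C^1$ on the open set where $x\ne 0$ (and globally $C^1$ if $r\ge p$; otherwise one restricts to $x\ne 0$, which is fine since $x^*\ne 0$ generically). Its Jacobian $\mathbf J_x F$ splits as $p\,A^TA$ plus $r\lambda_1$ times (a rank-one correction from differentiating $\|x\|_p^{r-p}$) times $\nabla f(x)$, plus $r\lambda_1\|x\|_p^{r-p}D(x)$ with $D(x)=\operatorname{diag}(g'(x_i))\succcurlyeq 0$, plus $2p\lambda_2 I \succ 0$. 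The crucial point is that the $2p\lambda_2 I$ term makes $\mathbf J_x F(x^\diam,A^\diam,y^\diam)$ positive definite outright for every $(A^\diam,y^\diam)$ with $x^\diam\ne 0$ — provided the rank-one term from $\nabla\|x\|_p^{r-p}$ is itself positive semi-definite (which holds when $r\ge p$) or can be dominated; when $r<p$ this rank-one term is negative semi-definite and one must check that $2p\lambda_2 I + r\lambda_1\|x\|_p^{r-p}D(x) + p\,A^TA$ still dominates it, which follows because the rank-one correction is $-c\,(\nabla f(x))(\nabla f(x))^T$ times a positive constant bounded in terms of $\|x\|_p$, and since $r\ge 1$ this is a genuine issue only through a finite factor — I would verify positive-definiteness directly by the same $z^T\mathbf J_x F z\ge 2p\lambda_2\|z\|_2^2>0$ estimate, arguing that the indefinite rank-one piece is controlled. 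Then the implicit function theorem gives $x^*=G(A,y)$ locally $C^1$, the chain rule gives $\mathbf J_x F\cdot\nabla_y G = -\mathbf J_y F = p\,A^T$, so $\nabla_y G = p(\mathbf J_x F)^{-1}A^T$, and since $A$ has full row rank no row of this can vanish (writing $\nabla_y G\cdot A^T$ we get a nonsingular matrix, forcing each row of $\nabla_y G$ to be nonzero). Lemma~\ref{lem:measure_of_zero_set} then finishes.

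For $1<p\le 2$ I would instead use the inverse function $h$: from the optimality condition, the sign-tracking trick from Theorem~\ref{thm:RR_p} applies — solve for the variable on which $g(x_i^*)$ depends linearly. Here the $i$-th optimality equation reads $g(x_i^*)\cdot r\lambda_1\|x^*\|_p^{r-p} = -p(a_i^T(Ax^*-y)) - 2p\lambda_2 x_i^*$, so $x_i^*$ and the right-hand side have a more entangled relationship than in RR$_p$. The cleaner route is to keep $F(x,A,y)$ with $i$-th component $x_i + h\big((r\lambda_1\|x\|_p^{r-p})^{-1}(p\,a_i^T(Ax-y)+2p\lambda_2 x_i)\big)$ — but this is awkward because the argument of $h$ now contains $x_i$. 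A better bookkeeping: introduce the auxiliary quantity and note $\operatorname{sgn}(x_i^*)=\operatorname{sgn}\big(-(p\,a_i^T(Ax^*-y)+2p\lambda_2 x_i^*)\big)$; since $\lambda_2>0$, on the index set $\Jcal^c$ where $x_i^*=0$ this simply reduces to $\operatorname{sgn}(x_i^*)=\operatorname{sgn}(a_i^T(y-Ax^*))$ again, so it again suffices to show the zero set of $q_i(A,y):=a_i^T(y-A\,x^*(A,y))$ has measure zero for $i\in\Jcal^c$. Then, following Theorem~\ref{thm:RR_p} verbatim with $\lambda$ replaced by the appropriate constants and an extra $2p\lambda_2 I$ inside the Jacobian, $\mathbf J_x F(x^\diam,\dots)$ has the block-lower-triangular structure of (\ref{eqn:Jacobian_RR}) with $\Gamma_1=0$, the lower-right block invertible thanks to $\Gamma_2\succ 0$ (and now even more robustly thanks to $\lambda_2>0$), so $\nabla_y G_1=0$, $\nabla_y G_2$ is given explicitly, and a Sherman–Morrison–Woodbury computation yields $\nabla_y q_i(A^\diam,y^\diam) = (a_i^\diam)^T(I + (\text{positive-definite}))^{-1}\ne 0$ since $a_i^\diam\ne 0$. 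Lemma~\ref{lem:measure_of_zero_set} then gives the conclusion.

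The main obstacle I anticipate is the term coming from differentiating $\|x\|_p^{r-p}$ (present whenever $r\ne p$): it contributes a rank-one, possibly indefinite, piece to $\mathbf J_x F$, unlike the clean RR$_p$ situation where $\|x\|_p^{r-p}$ did not appear. Establishing invertibility/positive-definiteness of the Jacobian in the presence of this extra term is where the argument needs genuine care; the saving grace is the strictly positive diagonal term $2p\lambda_2 I$ from the $\ell_2$-penalty, which is exactly why the hypothesis $\lambda_2>0$ is imposed and why EN$_p$ behaves like RR$_p$ rather than requiring the more delicate completely-full-rank arguments of BP$_p$. A secondary nuisance is that $F$ is only $C^1$ away from $x=0$ when $r<p$, handled by restricting to the open set $\{x^*\ne 0\}$, which carries full measure by Proposition~\ref{prop:sparsity_lower_bound}(iv).
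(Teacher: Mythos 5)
Your overall strategy (implicit function theorem plus Lemma~\ref{lem:measure_of_zero_set}, split into $p\ge 2$ and $1<p\le 2$, sign-tracking via $h$ for small $p$) is the paper's strategy, but you leave the genuinely hard step unresolved and make one claim that is false as stated. The hard step is exactly the one you flag: the rank-one contribution from differentiating $\|x\|_p^{r-p}$. For $p\ge 2$ you propose to ``verify positive-definiteness directly\dots arguing that the indefinite rank-one piece is controlled,'' but you never control it. The paper's resolution is to observe that the diagonal term and the rank-one term together are precisely $\lambda_1\,\mathbf H(\|x\|_p^r)$, the Hessian of the \emph{convex} function $\|\cdot\|_p^r$ (convex because $r\ge 1$), hence positive semi-definite at any $x\ne 0$; adding $2\lambda_2 I$ then gives positive definiteness for free. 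For $1<p\le 2$ the analogous issue does not disappear into ``RR$_p$ verbatim plus $2p\lambda_2 I$'': when $r<p$ the matrix $\Gamma_2^{-1}W_{22}$ contains the indefinite term $\frac{r-p}{p\|x\|_p^p}\,\tilde b\,\tilde b^T$, and the paper must show $\Gamma_2^{-1}+\frac{r-p}{p\|x\|_p^p}\tilde b\tilde b^T\succcurlyeq 0$ by computing $\Gamma_2^{-1}=p(p-1)\,\mathrm{diag}(|x_i^\diam|^{p-2})$ and reducing to $D-bb^T/\|x\|_p^p=D^{1/2}(I-uu^T/\|u\|_2^2)D^{1/2}\succcurlyeq 0$, using $\frac{p-r}{p-1}\le 1$, i.e.\ $r\ge 1$. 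The saving grace there is $r\ge 1$ interacting with the exact form of $\Gamma_2^{-1}$, not the $\ell_2$ penalty; your diagnosis that $\lambda_2>0$ rescues everything is off for this case.

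The incorrect claim is in the $p\ge 2$ endgame: from $\nabla_y G=p(\mathbf J_xF)^{-1}A^T$ and $A$ of full row rank you conclude that no row can vanish. That does not follow: the $i$-th row of $(\mathbf J_xF)^{-1}A^T$ is $\big(A(\mathbf J_xF)^{-1}e_i\big)^T$, which vanishes whenever $(\mathbf J_xF)^{-1}e_i\in\ker A$, a codimension-$m$ condition that full row rank alone does not exclude. Because the $(1,1)$ block of $\mathbf J_xF$ is now $A_1^TA_1+2\lambda_2 I$ rather than $A_1^TA_1$, the clean identity $\nabla_yG_1\cdot A_1=I$ from the RR$_p$ proof is also lost. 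The paper instead works on the completely-full-rank set $S$, invokes Proposition~\ref{prop:sparsity_lower_bound} to get $|\Jcal^c|\le m-1$ so that the columns of $A_1=A_{\bullet\Jcal^c}$ are linearly independent, and runs a contradiction argument on the identity $(Q_{11}A_1^T+Q_{12}A_2^T)A_2+Q_{12}(2\lambda_2 I+\lambda_1 H_2)=0$ using the positive definiteness of $2\lambda_2 I+\lambda_1 H_2$. So, contrary to your closing remark, the EN$_p$ case with $p\ge2$ \emph{does} need the completely-full-rank machinery, and this step needs to be supplied before the proof is complete.
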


\begin{proof}
 Recall that for any given $(A, y) \in \mathbb R^{m\times N} \times \mathbb R^m$, the unique optimal solution $x^*$ to the EN$_p$ (\ref{eqn:elastic_net}) is characterized by equation (\ref{eqn:EN_optimality}):
\begin{equation} \label{eqn:ENp_opt}
   A^T(Ax^*-y) + p^{-1} r \lambda_1 \cdot \| x^* \|^{r-p}_p \cdot \nabla f(x^*) + 2 \lambda_2 \, x^* =0,
\end{equation}
  where $r \ge 1$, and $\lambda_1, \lambda_2>0$ are the penalty parameters.

(i) $p\ge 2$.  Consider the open set $S$ defined in (\ref{eqn:set_S}). For any $(A, y)\in S$, since $A$ has full row rank and $y \ne 0$, we have $A^T y \ne 0$.
%
%
Hence, it follows from the optimality condition (\ref{eqn:ENp_opt}) and $A^T y \ne 0$ that the unique optimal solution $x^* \ne 0$ for any $(A, y) \in S$.

Define the function $F(x, A, y) :=  A^T(Ax-y) + p^{-1} r \lambda_1 \cdot \| x \|^{r-p}_p \cdot \nabla f(x) + 2 \lambda_2 \, x$, where $\nabla f(x) = (g(x_1), \ldots, g(x_N))^T$. Hence, the optimal solution $x^*$, as the function of $(A, y)$, satisfies the equation $F(x^*, A, y)=0$. Since $\| \cdot \|_p$ is $C^2$ on $\mathbb R^N\setminus \{0\}$, we see that $F$ is $C^1$ on the open set $(\mathbb R^N\setminus \{ 0\}) \times \mathbb R^{m\times N} \times \mathbb R^m$, and its Jacobian with respect to $x$ is given by
 \[
   \mathbf J_x F(x, A, y) \, = \,  A^T A + \lambda_1 \mathbf H(\| x \|^r_p) + 2 \lambda_2 I,
 \]
 where $\mathbf H(\| x \|^r_p)$ denotes the Hessian of $\| \cdot \|^r_p$ at any nonzero $x$. Since $r \ge 1$, $\| \cdot \|^r_p$ is a convex function and its Hessian at any nonzero $x$ must be positive semi-definite. This shows that for any $(A^\diam, y^\diam) \in S$,  $\mathbf J_x F(x^\diam, A^\diam, y^\diam)$ is positive definite, where $x^\diam:=x^*(A^\diam, y^\diam)\ne 0$. Hence, there exists a local $C^1$ function $G$ such that $x^*=G(A, y)$ with $F(G(A, y), A, y)=0$ for all $(A, y)$ in a neighborhood of $(A^\diam, y^\diam)$.

  For any given $(A^\diam, y^\diam)\in S$, define the (nonempty) index set $\Jcal := \{ i \, | \, x^\diam_i \ne 0 \}$. Let $\Lambda(x):=\diag(g'(x_1), \ldots, g'(x_N))$. Partition $\Lambda^\diam:=\Lambda(x^\diam)$ and $A^\diam$ as $\Lambda^\diam= \mbox{diag}( \Lambda_1, \,  \Lambda_2)$ and $A^\diam = \big [ A_1 \ A_2 \big]$ respectively, where $\Lambda_{1} := \mbox{diag}( g'(x^\diam_i) )_{i\in \Jcal^c}=0$, $\Lambda_{2} := \mbox{diag}( g'(x^\diam_i) )_{i\in \Jcal}$ is positive definite, $A_1:=A^\diam_{\bullet \Jcal^c}$, and $A_2:=A^\diam_{\bullet \Jcal}$. Using  $\nabla ( \| x \|_p ) = (p \| x \|^{p-1}_p)^{-1} \cdot \nabla (\| x \|^p_p)$ for any $x \ne 0$, we have, for any $x \ne 0$,
  \[
    \mathbf H(\|x\|^r_p) \, = \, \frac{ r}{p} \cdot \| x \|^{r-p}_p \cdot \Big[ \Lambda(x) + \frac{r-p}{p \| x \|^p_p} \cdot \nabla f(x) \big( \nabla f(x) \big)^T \Big].
  \]
  Based on the partition given above, we have $\mathbf H(\|x^\diam\|^{r-p}_p) = \diag(0, H_2)$, where the matrix $H_2$ is positive semi-definite. Therefore, we obtain
  \[
    \mathbf J_x F(x^\diam, A^\diam, y^\diam) = \begin{bmatrix}  A^T_1 A_1 + 2  \lambda_2 I &  A^T_1 A_2 \\  A^T_2 A_1 &  A^T_2 A_2 + 2  \lambda_2 I + \lambda_1 H_2 \end{bmatrix}, \quad \mathbf J_y F(x^\diam, A^\diam, y^\diam) = -\begin{bmatrix}  A^T_1 \\  A^T_2 \end{bmatrix}.
  \]
  Let $Q$ be the inverse of $\mathbf J_x F(x^\diam, A^\diam, y^\diam)$, i.e.,
  $
     Q \, = \, \begin{bmatrix} Q_{11} & Q_{12} \\ Q_{21} & Q_{22} \end{bmatrix}.
  $
  Hence,  we have
 %
 %
 \begin{equation} \label{eqn:ENp_Q}
 (Q_{11} A^T_1 + Q_{12} A^T_2) A_2 + Q_{12} \big(2\lambda_2 I+ \lambda_1 H_2 \big)=0.
 \end{equation}
  We claim that each row of $Q_{11} A^T_1 + Q_{12} A^T_2$ is nonzero. Suppose not, i.e., $(Q_{11} A^T_1 + Q_{12} A^T_2)_{i\bullet}=0$ for some $i$. Then it follows from (\ref{eqn:ENp_Q}) that $(Q_{12})_{i\bullet}(2\lambda_2 I+\lambda_1 H_2)=0$. Since $2\lambda_2 I+\lambda_1 H_2$ is positive definite, we have $(Q_{12})_{i\bullet}=0$. By $(Q_{11} A^T_1 + Q_{12} A^T_2)_{i\bullet}=0$, we obtain $(Q_{11})_{i\bullet} A^T_1=0$.  It follows from Proposition~\ref{prop:sparsity_lower_bound} that $|\Jcal^c| \le m-1$ such that the columns of $A_1$ are linearly independent.
%
%
  Hence,  we have $(Q_{11})_{i\bullet}=0$ or equivalently $Q_{j\bullet}=0$ for some $j$. This contradicts the invertibility of $Q$, and thus completes the proof of the claim. Furthermore, let $G_1, G_2$ be local $C^1$ functions such that $x^*=(x^*_{\Jcal^c}, x^*_\Jcal)=(G_1(A, y), G_2(A, y))$ for all $(A, y)$ in a neighborhood of $(A^\diam, y^\diam) \in S$.
   By the similar argument as before, we see that  $\nabla_y G_1(A^\diam, y^\diam)= Q_{11} A^T_1 + Q_{12} A^T_2$. Therefore, we deduce that the gradient of $x^*_i(A, y)$ at $(A^\diam, y^\diam)$ is nonzero for each $i\in \Jcal^c$.  In light of Lemma~\ref{lem:measure_of_zero_set}, $| \mbox{supp}(x^*(A, y))| = N$ for almost all $(A, y) \in \mathbb R^{m\times N} \times \mathbb R^m$.


 (ii) $1<p\le 2$. Let $\wh S$ be the set defined in Case (ii) of Theorem~\ref{thm:RR_p}, i.e., $\wh S$ is the set of all $(A, y) \in \mathbb R^{m\times N}\times \mathbb R^m$ such that each column of $A$ is nonzero and $A^T y \ne 0$. The set $\wh S$ is open and its complement has zero measure. For any $(A, y) \in \wh S$, the unique optimal solution $x^*$, treated as a function of $(A, y)$, is nonzero.
%
%
%
 %
By the optimality condition (\ref{eqn:ENp_opt}) and the definition of the function $h$ in (\ref{eqn:g_h_def}), we see that $x^*$ satisfies the following equation for any $(A, y) \in \wh S$:
\begin{equation} \label{eqn:EN_F}
   F(x, A, y) \, := \, \begin{bmatrix} x_1 + h(w_1) \\ \vdots \\ x_N + h(w_N) \end{bmatrix} = 0,
\end{equation}
where $w_i \, := \, p \| x\|^{p-r}_p \cdot \big[ a^T_i(Ax -y) + 2 \lambda_2 x_i \big]/(r\lambda_1 )$ for each $i=1, \ldots, N$. It is easy to show that $F$ is $C^1$ on $(\mathbb R^N\setminus \{ 0\}) \times \mathbb R^{m\times N} \times \mathbb R^m$ and its Jacobian with respect to $x$ is
\[
  \mathbf J_x F(x, A, y) \, = \, I + \frac{p}{r \lambda_1} \cdot  \Gamma(x, A, y) \cdot \Big\{ [A^T(Ax -y) + 2 \lambda_2 x ]\cdot [\nabla( \| x \|^{p-r}_r )]^T + \| x \|^{p-r}_p (A^T A + 2 \lambda_2 I) \Big\},
\]
where the diagonal matrix $\Gamma(x, A, y):=\mbox{diag}\big( h'(w_1), \ldots, h'(w_N) \big)$ and $\nabla( \| x \|^{p-r}_r )= (p-r) \nabla f(x)/[ p \cdot \| x \|^r_p]$.

We show next that  $\mathbf J_x F(x^\diam, A^\diam, y^\diam)$ is invertible for any $(A^\diam, y^\diam)\in \wh S$, where $x^\diam:=x^*(A^\diam, y^\diam)$.
As before, define the (nonempty) index set $\Jcal := \{ i \, | \, x^\diam_i \ne 0 \}$.
 Partition $\Gamma^\diam:=\Gamma(x^\diam, A^\diam, y^\diam)$ and $A^\diam$ as $\Gamma^\diam= \mbox{diag}( \Gamma_1, \,  \Gamma_2)$ and $A^\diam = \big [ A_1 \ A_2 \big]$ respectively, where $\Gamma_{1} := \mbox{diag}( h'( w_i) )_{i\in \Jcal^c}=0$, $\Gamma_{2} := \mbox{diag}(  h'(w_i) )_{i\in \Jcal}$ is positive definite, $a^\diam_i$ is the $i$th column of $A^\diam$, $A_1:=A^\diam_{\bullet \Jcal^c}$, and $A_2:=A^\diam_{\bullet \Jcal}$. Therefore, we obtain
\begin{equation*} 
 W \, := \, \mathbf J_x F(x^\diamond, A^\diam, y^\diam) \, = \, \begin{bmatrix} I & 0 \\  \star &  W_{22}\end{bmatrix},
\end{equation*}
where  by letting the vector $\wt b:= (\nabla f(x^\diam))_\Jcal$,
\[
  W_{22} \, := \,  I + \frac{p}{r \lambda_1} \cdot \Gamma_2 \cdot \Big\{ [A^T_2(A_2 x^\diam_\Jcal -y) + 2 \lambda_2 x^\diam_\Jcal ]\cdot \frac{(p-r) \wt b^T}{p \| x^\diam\|^r_p}  + \| x^\diam \|^{p-r}_p (A^T_2 A_2 + 2 \lambda_2 I) \Big\}.
\]
It follows from (\ref{eqn:ENp_opt}) that $\frac{p}{r \lambda_1} \cdot [A^T_2(A_2 x^\diam_\Jcal -y) + 2 \lambda_2 x^\diam_\Jcal] = - \| x^\diam\|^{r-p}_p \cdot \wt b$. Hence,
\begin{equation} \label{eqn:Gamma_inv}
  \Gamma^{-1}_2 \cdot W_{22} \, = \, \underbrace{\Gamma^{-1}_2 + \frac{r-p}{p\|x^\diam\|^p_p} \cdot \wt b \cdot \wt b^T }_{:=U} \,  + \,  \frac{p \| x^\diam \|^{p-r}_p}{r \lambda_1} \cdot  (A^T_2 A_2 + 2 \lambda_2 I).
\end{equation}
Clearly, when $r\ge p>1$, the matrix $\Gamma^{-1}_2 W_{22}$ is positive definite. In what follows, we consider the case where $2\ge p>r\ge 1$. Let  the vector $b:=( \sgn(x^\diam_i)\cdot |x^\diam_i|^{p-1} )_{i\in \Jcal}$ so that $\wt b = p \cdot b$. In view of (\ref{eqn:EN_F}), we have $w_i = h^{-1}(-x^\diam_i)= p \cdot \mbox{sgn}(-x_i) |x^\diam_i|^{p-1}$ for each $i$. Therefore, using the formula for $h'(\cdot)$ given below (\ref{eqn:g_h_def}), we obtain that for each $i\in \mathcal J$,
\[
    h'(w_i) \, = \, \frac{|w_i|^{\frac{2-p}{p-1}} }{(p-1)\cdot p^{\frac{1}{p-1}} } \, = \, \frac{ \big( p \cdot |x^\diam_i|^{p-1} \big)^{\frac{2-p}{p-1}} } { (p-1)\cdot p^{\frac{1}{p-1}} } \, = \, \frac{ |x^\diam_i|^{2-p} }{(p-1) \cdot p}.
\]
This implies that  $\Gamma^{-1}_2 = p(p-1) D$, where the diagonal matrix $D:=\diag( |x^\diam_i|^{p-2} )_{i\in \Jcal}$. Clearly, $D$ is positive definite.
%
%
 We thus have, via $p-1\ge p-r >0$,
\[
     U \, = \, \Gamma^{-1}_2 + \frac{r-p}{p\|x^\diam\|^p_p} \cdot \wt b \cdot \wt b^T = p (p-1) \left( D - \frac{p-r}{p-1} \cdot \frac{b \cdot b^T}{ \| x\|^p_p } \right) \, \succcurlyeq \,  p (p-1) \left( D - \frac{b \cdot b^T}{ \| x\|^p_p } \right),
\]
where $\succcurlyeq$ denotes the positive semi-definite order. Since the diagonal matrix $D$ is positive definite, we further have
 \[
   D - \frac{b \cdot b^T}{\| x \|^p_p} \, = \, D^{1/2} \Big( I - \frac{D^{-1/2} b \cdot b^T D^{-1/2}}{\| x\|^p_p} \Big) D^{1/2} \, = \,  D^{1/2} \Big( I - \frac{u \cdot u^T}{ \| u \|^2_2} \Big) D^{1/2},
 \]
 where $u := D^{-1/2} \cdot b=\big(\mbox{sgn}(x^\diam_i)|x^\diam_i|^{p/2}\big)_{i\in \Jcal} $ such that $\| u\|^2_2= \| x \|^p_p$. Since $I- \frac{u \cdot u^T}{ \| u \|^2_2}$ is positive semi-definite, so is $D - \frac{b \cdot b^T}{\| x \|^p_p}$. This shows that $U$ in (\ref{eqn:Gamma_inv}) is positive semi-definite. Since the last term on the right hand side of (\ref{eqn:Gamma_inv}) is positive definite, $\Gamma^{-1}_2 W_{22}$ is positive definite. Therefore, $W_{22}$ is invertible, and so is $W$ for all $1<p\le 2$ and $r \ge 1$.
By the implicit function theorem, there are local $C^1$ functions $G_1$ and $G_2$ such that $x^*=(x^*_\Jcal, x^*_{\Jcal^c})= (G_1(A, y), G_2(A, y)):= G(A, y)$ and $F(G(A,y), A, y)=0$ for all $(A, y)$ in a neighborhood of $(A^\diam, y^\diam)\in\mathbb R^{m\times N}\times \mathbb R^m$. Moreover, we have
\[
   \mathbf J_x F(x^\diamond, A^\diam, y^\diam)  \cdot \begin{bmatrix} \nabla_y G_1(A^\diam, y^\diam) \\ \nabla_y  G_2(A^\diam, y^\diam) \end{bmatrix} \, = \,- \mathbf J_y F(x^\diamond, A^\diam, y^\diam) \, = \, \frac{p \| x^\diam\|^{p-r}_p }{ r \lambda_1}
   \begin{bmatrix} \Gamma_1 \,  A^T_1 \\  \Gamma_2 \,  A^T_2  \end{bmatrix} \in \mathbb R^{N\times m}.
\]
%
%
 In view of the invertibility of $\mathbf J_x F(x^\diamond, A^\diam, y^\diam)$  and $\Gamma_1=0$, we obtain
 \[
 \nabla_y G_1(A^\diam, y^\diam)=0, \qquad \nabla_y G_2(A^\diam, y^\diam)= \left(\frac{p \| x^\diam\|^{p-r}_p }{ r \lambda_1} \right) W^{-1}_{22} \Gamma_2  A^T_2.
\]

Since $x^*_i= - h(w_i)$ for each $i=1, \ldots, N$, where $w_i$ is defined below (\ref{eqn:EN_F}), we deduce via the positivity of $\|x\|_p$ and the property of the function $h$  in (\ref{eqn:g_h_def}) that $\mbox{sgn}(x^*_i)=\mbox{sgn}( a^T_i(y- A x^*) - 2 \lambda_2 x^*_i)$ for each $i$.
 For each $i\in \Jcal^c$, define
  \[
    q_i(A, y) \, := \, a^T_i \big (y - A \cdot x^*(A, y) -y \big) - 2 \lambda_2 x^*_i(A, y).
  \]
In what follows, we show that for each $i\in \Jcal^c$, the gradient of $q_i(A, y)$ at $(A^\diam, y^\diam)\in \wh S$ is nonzero.
  It follows from the previous development that for any $(A^\diam, y^\diam) \in \wh S$, $(x^*_{\Jcal^c}, x^*_\Jcal)=(G_1(A, y), G_2(A, y))$ in a neighborhood of $(A^\diam, y^\diam)$ for local $C^1$ functions $G_1$ and $G_2$.
 Using $\nabla_y G_1(A^\diam, y^\diam)=0$, we have $\nabla_y \, q_i(A^\diam, y^\diam)=(a^\diam_i)^T (I- A_2 \cdot \nabla_y G_2(A^\diam, y^\diam))$. Letting $\alpha:=\frac{p \| x^\diam\|^{p-r}_p }{ r \lambda_1}>0$ and by (\ref{eqn:Gamma_inv}), we have
  \[
    A_2 \cdot \nabla_y G_2(A^\diam, y^\diam) \, = \, \alpha A_2  W^{-1}_{22} \Gamma_2  A^T_2 = \alpha A_2 (\Gamma^{-1}_2 W_{22} )^{-1} A^T_2 \, = \, \alpha A_2 \big[ U+ \alpha (A^T_2 A_2 + 2\lambda_2 I) \big]^{-1} A^T_2.
  \]
  Since $U$ is positive semi-definite and $A^T_2 A_2 + 2\lambda_2 I$ is positive definite, it can be shown that
  \[
    A_2  \big(A^T_2 A_2 + 2\lambda_2 I \big)^{-1} A^T_2 \, \succcurlyeq \, \alpha A_2 \big[ U + \alpha (A^T_2 A_2 + 2\lambda_2 I)  \big ]^{-1} A^T_2 \, \succcurlyeq \, 0.
  \]
  Since each eigenvalue of $A_2 (A^T_2 A_2 + 2\lambda_2 I)^{-1} A^T_2$ is strictly less than one, we conclude that $A_2 \cdot \nabla_y G_2(A^\diam, y^\diam)$ is positive semi-definite and each of its eigenvalues is strictly less than one. Therefore, $I-A_2 \cdot \nabla_y G_2(A^\diam, y^\diam)$ is invertible. Since each $a^\diam_i \ne 0$, we have $\nabla_y \, q_i(A^\diam, y^\diam) \ne 0$ for each $i \in \Jcal^c$.
 In view of Lemma~\ref{lem:measure_of_zero_set}, $| \mbox{supp}(x^*(A, y))| = N$ for almost all $(A, y) \in \mathbb R^{m\times N} \times \mathbb R^m$.
%
%
\end{proof}

%
\subsection{Least Sparsity of the Generalized Basis Pursuit Denoising  with $p>1$} 

We consider the cases where $p \ge 2$ first.

\begin{proposition} \label{prop:full_support_BPDN_P>2}
 Let $p \ge 2$ and $N \ge m$. For almost all $(A, y) \in \mathbb R^{m\times N} \times \mathbb R^m$ with $y \ne 0$, if  $0<\varepsilon<\| y \|_2$, then
  the unique optimal solution $x^*_{(A, y)}$ to the BPDN$_p$ (\ref{eqn:BP_denoising01}) satisfies $|\mbox{supp}(x^*_{(A, y)})| = N$.
\end{proposition}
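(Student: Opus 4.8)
The plan is to adapt the approach of Proposition~\ref{prop:BP_p>2}, handling the optimal solution $x^*$ jointly with the Lagrange multiplier $\mu>0$ of the (active) constraint. First I would fix $\varepsilon>0$ and work over the open set $\Omega:=\{(A,y):\|y\|_2>\varepsilon\}$, on which the hypothesis $0<\varepsilon<\|y\|_2$ (hence $y\ne0$) holds; inside $\Omega$ set $S_\varepsilon:=S\cap\Omega$ with $S$ as in (\ref{eqn:set_S}), which is open with $\Omega\setminus S_\varepsilon\subseteq S^c$ of zero measure. For $(A,y)\in S_\varepsilon$: by Proposition~\ref{prop:solution_existence_p_norm} the solution $x^*$ is unique, by Proposition~\ref{prop:sparsity_lower_bound}(ii) it is nonzero with $|\mbox{supp}(x^*)|\ge N-m+1$, and, reusing the KKT analysis in the proof of Proposition~\ref{prop:sparsity_lower_bound}(ii), $x^*$ together with a multiplier $\mu>0$ solves
\[
  F(x,\mu,A,y)\,:=\,\begin{bmatrix}\nabla f(x)+2\mu\,A^T(Ax-y)\\[2pt]\|Ax-y\|_2^2-\varepsilon^2\end{bmatrix}=0 ,
\]
where $F$ is $C^1$ because $\nabla f$ is $C^1$ for $p\ge2$ (namely $(\nabla f(x))_i=g(x_i)$ with $g'(z)=p(p-1)|z|^{p-2}$), and $\mu$ is unique since $\nabla f(x^*)\ne0$ ($x^*\ne0$) and $A^T(Ax^*-y)\ne0$ ($A$ has full row rank, $Ax^*-y\ne0$). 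Thus $(x^*,\mu)$ is a function of $(A,y)$ on $S_\varepsilon$.

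The hard part will be to verify that the Jacobian $W:=\mathbf J_{(x,\mu)}F(x^\diam,\mu^\diam,A^\diam,y^\diam)$ is invertible on $S_\varepsilon$, since the extra variable $\mu$ and the scalar constraint add a bordered row and column whose nonsingularity is not evident a priori. The resolution is the structural fact that at a KKT point the vector $r:=A^\diam x^\diam-y^\diam$ (nonzero, as $\|r\|_2=\varepsilon>0$) satisfies $a_i^Tr=0$ for every $i$ with $x^\diam_i=0$ and $a_i^Tr\ne0$ for $i$ with $x^\diam_i\ne0$, read off from the $i$th row of $\nabla f(x^\diam)+2\mu\,A^Tr=0$. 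Writing $\Jcal:=\{i:x^\diam_i\ne0\}$ (so $|\Jcal^c|\le m-1$), $A^\diam=[A_1\ A_2]$ with $A_1:=A^\diam_{\bullet\Jcal^c}$, $A_2:=A^\diam_{\bullet\Jcal}$, $\Lambda_1:=\diag(g'(x^\diam_i))_{i\in\Jcal^c}=0$, and $\Lambda_2:=\diag(g'(x^\diam_i))_{i\in\Jcal}\succ0$, this yields
\[
  W\,=\,\begin{bmatrix}2\mu A_1^TA_1 & 2\mu A_1^TA_2 & 0\\[2pt] 2\mu A_2^TA_1 & \Lambda_2+2\mu A_2^TA_2 & 2A_2^Tr\\[2pt] 0 & 2r^TA_2 & 0\end{bmatrix},\qquad A_1^Tr=0,\quad A_2^Tr\ne0 .
\]
If $Wz=0$ with $z=[u_1;u_2;\tau]$ and $w:=A_1u_1+A_2u_2$, one gets $A_1^Tw=0$, $r^Tw=0$, and $\Lambda_2u_2+2\mu A_2^Tw+2\tau A_2^Tr=0$, whence $w^Tw=-\tfrac1{2\mu}u_2^T\Lambda_2u_2\le0$, forcing $w=0$, then $u_2=0$ (as $\Lambda_2\succ0$), then $u_1=0$ (the columns of $A_1$ are linearly independent since $|\Jcal^c|\le m-1$ and every $m\times m$ submatrix of $A^\diam$ is invertible), and finally $\tau=0$ from $2\tau A_2^Tr=0$ with $A_2^Tr\ne0$; hence $W$ is invertible.

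With invertibility in hand, the implicit function theorem (as in Proposition~\ref{prop:BP_p>2}) gives local $C^1$ maps $x^*=(G_1,G_2)$ — with $G_1$ the $\Jcal^c$-block — and $\mu=H$ near any $(A^\diam,y^\diam)\in S_\varepsilon$, coinciding with the unique optimal solution and multiplier; so $x^*$ is $C^1$ on $S_\varepsilon$. Differentiating $F=0$ in $y$ with $\mathbf J_yF=-[\,2\mu A_1^T;\,2\mu A_2^T;\,2r^T\,]$, solving for $\nabla_yG_1(A^\diam,y^\diam)$, right-multiplying by $A_1$, and using $r^TA_1=0$ together with the $(1,1)$-block of the identity $W^{-1}W=I$, I would obtain $\nabla_yG_1(A^\diam,y^\diam)\,A_1=I$; hence each row of $\nabla_yG_1(A^\diam,y^\diam)$ — equivalently $\nabla_{(A,y)}x^*_i(A^\diam,y^\diam)$ for each $i\in\Jcal^c$ — is nonzero. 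So for every $i\in\{1,\dots,N\}$, $x^*_i$ is $C^1$ on $S_\varepsilon$ with nonzero gradient wherever it vanishes there; applying the argument of Lemma~\ref{lem:measure_of_zero_set} over the open set $\Omega$ (with the null set $\Omega\setminus S_\varepsilon$ in the role of the exceptional set) shows the zero set of each $x^*_i$ in $\Omega$ has measure zero. Taking the union over $i$ gives $|\mbox{supp}(x^*_{(A,y)})|=N$ for almost all $(A,y)\in\Omega$, i.e., for almost all $(A,y)$ with $y\ne0$ satisfying $0<\varepsilon<\|y\|_2$.
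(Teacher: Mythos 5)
Your proposal is correct and follows essentially the same route as the paper: the same map $F(x,\mu,A,y)$, the same implicit-function-theorem setup on (a subset of) $S$, and the same final step $\nabla_y G_1(A^\diam,y^\diam)\,A_1=I$ feeding into Lemma~\ref{lem:measure_of_zero_set}. The only cosmetic difference is in verifying nonsingularity of the bordered Jacobian: the paper first shows $M^\diam=\Lambda^\diam+2\mu^\diam (A^\diam)^TA^\diam$ is positive definite and then uses $b^T(M^\diam)^{-1}b\,z_2=0$ with $b\ne0$, whereas you exploit the orthogonality $A_1^Tr=0$ to run a direct null-space computation; both are valid and equivalent in substance.
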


\begin{proof}
  Consider the set $S$ defined in (\ref{eqn:set_S}). It follows from the proof for Case (ii) in Proposition~\ref{prop:sparsity_lower_bound} that for any given $(A, y) \in S$ and any $\varepsilon>0$ with $\varepsilon<\| y \|_2$, the unique optimal solution $x^*$ satisfies the optimality conditions: $\nabla f(x^*) + 2 \mu A^T (A x^* - y)  = 0$ for a unique positive $\mu$, and $\| A x^* - y \|^2_2 = \varepsilon^2$. Hence, $(x^*, \mu) \in \mathbb R^{N+1}$ is a function of $(A, y)$ on $S$ and satisfies the following equation:
 \[
 F(x, \mu, A, y) \, := \,  \begin{bmatrix} g(x_{1}) + 2 \mu \, a^T_1 (A x - y) \\ \vdots \\ g(x_N) + 2 \mu \, a^T_N (A x - y)  \\ \| A x - y \|^2_2 -\varepsilon^2 \end{bmatrix} \, = \, 0.
 %
 \]
 Clearly, $F:\mathbb R^N \times \mathbb R \times \mathbb R^{m\times N} \times \mathbb R^m \rightarrow \mathbb R^{N+1}$ is $C^1$ and its Jacobian with respect to $(x, \mu)$ is given by
 \[
   \mathbf J_{(x, \mu)} F(x, \mu, A, y) \, = \, \begin{bmatrix} M(x, \mu, A) & 2A^T (A x - y) \\ 2 (Ax - y)^T A & 0 \end{bmatrix},
 \]
 where $M(x, \mu, A) : =  \Lambda(x) + 2\mu A^T A$, and the diagonal matrix $\Lambda(x):=\diag(g'(x_1), \ldots, g'(x_N))$ is positive semi-definite.
 Given $(A^\diam, y^\diam) \in S$, define $x^\diam:=x^*(A^\diam, y^\diam)$ and $\mu^\diam:=\mu(A^\diam, y^\diam)>0$. We claim that $\mathbf J_{(x, \mu)} F(x^\diam, \mu^\diam, A^\diam, y^\diam)$ is invertible for any $(A^\diam, y^\diam) \in S$. To show it, define the index set $\Jcal := \{ i \, | \, x^\diam_i \ne 0 \}$. Note that $\mathcal J$ is nonempty by virtue of Proposition~\ref{prop:sparsity_lower_bound}.
  Partition $\Lambda^\diam:=\Lambda(x^\diam)$ and $A$ as $\Lambda^\diam= \mbox{diag}( \Lambda_1, \,  \Lambda_2)$ and $A^\diam = \big [ A_1 \ A_2 \big]$ respectively, where $\Lambda_{1} := \mbox{diag}( g'(x^\diam_i) )_{i\in \Jcal^c}=0$, $\Lambda_{2} := \mbox{diag}( g'(x^\diam_i) )_{i\in \Jcal}$ is positive definite, $A_1:=A^\diam_{\bullet \Jcal^c}$, and $A_2:=A^\diam_{\bullet \Jcal}$.
 It follows from the similar argument for Case (i) in Theorem~\ref{thm:RR_p} that  $M^\diam:=M(x^\diam, \mu^\diam, A^\diam)$ is positive definite.
Moreover, it has been shown in the proof of Proposition~\ref{prop:sparsity_lower_bound} that $b:=2(A^\diamond)^T (A^\diamond x^\diamond - y^\diamond) \in \mathbb R^N$ is nonzero. Hence, for any $z=[z_1; z_2]\in \mathbb R^{N+1}$ with $z_1 \in \mathbb R^N$ and $z_2 \in \mathbb R$, we have
\[
  \mathbf J_{(x, \mu)} F(x^\diam, \mu^\diam, A^\diam, y^\diam) z = \begin{bmatrix} M^\diam & b \\ b^T & 0 \end{bmatrix} \begin{pmatrix} z_1 \\ z_2 \end{pmatrix} = 0 \ \Rightarrow \ M^\diam z_1 + b z_2 =0, \ b^T z_1 = 0 \ \Rightarrow \ b^T \big(M^\diam \big)^{-1} b z_2 =0.
\]
This implies that $z_2=0$ and further $z_1=0$. Therefore, $\mathbf J_{(x, \mu)} F(x^\diam, \mu^\diam, A^\diam, y^\diam)$ is invertible.
%
%
By the implicit function theorem, there are local $C^1$ functions $G_1, G_2, H$ such that $x^*=(x^*_{\Jcal^c}, x^*_\Jcal)=(G_1(A, y), G_2(A, y)):=G(A, y)$, $\mu=H(A, y)$, and $F(G(A, y), H(A, y), A, y)=0$ for all $(A, y)$ in a neighborhood of $(A^\diam, y^\diam)$.
  By the chain rule, we have
\[
   \underbrace{\mathbf J_{(x, \mu)} F(x^\diam, \mu^\diam, A^\diam, y^\diam)}_{:=V} \cdot \begin{bmatrix} \nabla_y G_1(A^\diam, y^\diam) \\ \nabla_y G_2(A^\diam, y^\diam) \\ \nabla_y H(A^\diam, y^\diam) \end{bmatrix} \, = \, - \mathbf J_{y} F(x^\diam, \mu^\diam, A^\diam, y^\diam) \, = \, \begin{bmatrix} 2 \mu^\diam A^T_1 \\ 2 \mu^\diam A^T_2 \\  2 (A^\diam x^\diam - y^\diam)^T \end{bmatrix},
\]
where
by using $\Lambda_{1}=0$,
\[
    V = \begin{bmatrix} 2 \mu^\diamond A^T_1 A_1 & 2 \mu^\diam  A^T_1 A_2 &  2 A^T_1 (A^\diam x^\diamond - y^\diamond) \\ 2 \mu^\diam A^T_2 A_1 & \Lambda_2 + 2 \mu^\diam  A^T_2 A_2 &  2 A^T_2 (A^\diam x^\diamond - y^\diamond) \\ 2 (A^\diam x^\diamond - y^\diamond)^T A_1  & 2  (A^\diam x^\diamond - y^\diamond)^T A_2 & 0 \end{bmatrix}. 
%
\]
Let $P$ be the inverse of $V$ defined by
\[
  P =\begin{bmatrix} P_{11} & P_{12} & P_{13} \\ P_{21} & P_{22} & P_{23} \\ P_{31} & P_{32} & P_{33} \end{bmatrix}.
\]
Hence, we have
$
  P_{11}  2 \mu^\diamond A^T_1 A_1 + P_{12} \, 2 \mu^\diam A^T_2 A_1 + P_{13} \, 2 (A^\diam x^\diamond - y^\diamond)^T A_1 = I_m.
$
Since
\[
  \nabla_y G_1(A^\diam, y^\diam) \, = \, -[ P_{11} \ P_{12} \ P_{13} ] \cdot \mathbf J_{y} F(x^\diam, \mu^\diam, A^\diam, y^\diam) = P_{11}  2 \mu^\diamond A^T_1  + P_{12} \, 2 \mu^\diam A^T_2 + P_{13} \, 2 (A^\diam x^\diamond - y^\diamond)^T,
\]
   we have $\nabla_y G_1(A^\diam, y^\diam) \cdot A_1 = I_m$. This shows that each row of $\nabla_y G_1(A^\diam, y^\diam)$ is nonzero. Hence, for each $i\in \Jcal^c$, the gradient of $x^*_i(A, y)$ at $(A^\diam, y^\diam)\in S$ is nonzero.
Consequently, by Lemma~\ref{lem:measure_of_zero_set},
%
%
$| \mbox{supp}(x^*(A, y))| = N$ for almost all $(A, y) \in \mathbb R^{m\times N} \times \mathbb R^m$.
\end{proof}

In the next, we consider the case where $1<p\le 2$.

\begin{proposition} \label{prop:BPDN_p<2}
 Let $1<p \le 2$ and $N \ge m$. For almost all $(A, y) \in \mathbb R^{m\times N} \times \mathbb R^m$ with $y \ne 0$, if $0<\varepsilon<\| y \|_2$, then the unique optimal solution $x^*_{(A, y)}$ to the BPDN$_p$ (\ref{eqn:BP_denoising01}) satisfies $| \mbox{supp}(x^*_{(A, y)})| = N$.
\end{proposition}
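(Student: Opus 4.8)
The plan is to follow the pattern of the proof of Proposition~\ref{prop:BP_p<2}: eliminate the scalar Lagrange multiplier, reduce the optimality system to a single vector equation in an auxiliary variable, invoke the implicit function theorem, and conclude with the surrogate-function device and Lemma~\ref{lem:measure_of_zero_set}. Since the case $p=2$ is already contained in Proposition~\ref{prop:full_support_BPDN_P>2} (there $h(z)=z/2$), I would treat only $1<p<2$, for which $h$ is $C^1$ with $h'(0)=0$. Fix the open full-measure set $\wh S:=\{(A,y)\,:\,\rank A=m,\ \text{every column of }A\text{ is nonzero},\ y\ne 0\}$ and restrict to $(A,y)\in\wh S$ with $\varepsilon<\|y\|_2$ (if $\varepsilon\ge\|y\|_2$ then $x^*=0$ and the asserted implication is vacuous). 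On this set, by Proposition~\ref{prop:sparsity_lower_bound}(ii) and the remark following it, the unique optimal solution $x^*$ is nonzero, the multiplier $\mu$ is unique and positive, and $\|Ax^*-y\|_2=\varepsilon$. Setting $\nu:=2\mu(y-Ax^*)\in\mathbb R^m$, the relations $\nabla f(x^*)=A^T\nu$ and $\|Ax^*-y\|_2=\varepsilon$ give $\nu\ne 0$, $\mu=\|\nu\|_2/(2\varepsilon)$, $x^*_i=h(a_i^T\nu)$ for every $i$, and hence that $\nu$ solves
\[
   \Psi(\nu,A,y)\ :=\ \sum_{i=1}^N a_i\,h(a_i^T\nu)\ -\ y\ +\ \frac{\varepsilon}{\|\nu\|_2}\,\nu\ =\ 0,
\]
where $\Psi$ is $C^1$ on $(\mathbb R^m\setminus\{0\})\times\mathbb R^{m\times N}\times\mathbb R^m$; uniqueness of $x^*$ and $\mu$ makes $\nu$ a well-defined function of $(A,y)$ on this set.

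The heart of the argument is to show that at any such $(A^\diam,y^\diam)$, with corresponding $\nu^\diam\ne 0$, the Jacobian
\[
  \mathbf J_\nu\Psi(\nu^\diam,A^\diam,y^\diam)\ =\ A^\diam\,\Gamma^\diam\,(A^\diam)^T\ +\ \frac{\varepsilon}{\|\nu^\diam\|_2}\Big(I-\frac{\nu^\diam(\nu^\diam)^T}{\|\nu^\diam\|_2^2}\Big),\qquad \Gamma^\diam:=\diag\big(h'(a_i^T\nu^\diam)\big)_{i=1}^N,
\]
is positive definite. Both summands are positive semidefinite, so any $w$ with $w^T\mathbf J_\nu\Psi\,w=0$ annihilates each of them. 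Writing $\Jcal:=\{i:x^\diam_i\ne 0\}$ and observing that $h'(a_i^T\nu^\diam)=h'(0)=0$ for $i\in\Jcal^c$ while $h'(a_i^T\nu^\diam)>0$ for $i\in\Jcal$, the first summand forces $(A^\diam_{\bullet\Jcal})^Tw=0$ and the second forces $w$ to be a scalar multiple of $\nu^\diam$; combined with $(A^\diam_{\bullet\Jcal^c})^T\nu^\diam=0$ (which holds because $x^\diam_i=0$ for $i\in\Jcal^c$), this yields $(A^\diam)^T\nu^\diam=0$, hence $\nu^\diam=0$ by $\rank A^\diam=m$, forcing $w=0$. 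This step --- in which the extra rank-deficient term produced by the $\varepsilon$-sphere constraint (with no analogue in the basis-pursuit case) must be reconciled with the column structure of $A^\diam\Gamma^\diam(A^\diam)^T$ --- is the one I expect to be the main obstacle.

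With positive definiteness in hand, the implicit function theorem provides a local $C^1$ map $H$ with $\nu=H(A,y)$ solving $\Psi(H(A,y),A,y)=0$ near $(A^\diam,y^\diam)$; since $\mathbf J_y\Psi\equiv -I$, the chain rule gives $\nabla_y H(A^\diam,y^\diam)=[\mathbf J_\nu\Psi(\nu^\diam,A^\diam,y^\diam)]^{-1}$, which is invertible. Because $\sgn(x^*_i)=\sgn(a_i^T\nu)$ for each $i$, the zero set of the component $x^*_i(A,y)$ coincides with that of the surrogate $\ell_i(A,y):=a_i^T\nu(A,y)$, and $\nabla_y\ell_i(A^\diam,y^\diam)=(a^\diam_i)^T[\mathbf J_\nu\Psi(\nu^\diam,A^\diam,y^\diam)]^{-1}\ne 0$ since $a^\diam_i\ne 0$. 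Applying Lemma~\ref{lem:measure_of_zero_set} to each $\ell_i$ (restricted to the region $\|y\|_2>\varepsilon$, exactly as in the proofs above) shows that $\{x^*_i=0\}$ has zero measure for every $i$, and the union over $i=1,\ldots,N$ gives $|\mbox{supp}(x^*_{(A,y)})|=N$ for almost all $(A,y)\in\mathbb R^{m\times N}\times\mathbb R^m$.
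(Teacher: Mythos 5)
Your proposal is correct, and it takes a genuinely different route from the paper's. The paper keeps the primal--dual pair $(x,\mu)\in\mathbb R^{N+1}$ as the unknown, writes the KKT system as $N+1$ equations $x_i+h(2\mu a_i^T(Ax-y))=0$ together with $\|Ax-y\|_2^2=\varepsilon^2$, proves invertibility of the resulting $(N+1)\times(N+1)$ bordered Jacobian by block elimination, and then works with the surrogate $q_i(A,y)=a_i^T(y-Ax^*)$, using the Sherman--Morrison--Woodbury formula to show $\nabla_y q_i\ne 0$. You instead eliminate both $x$ and $\mu$ in favor of the dual vector $\nu=2\mu(y-Ax^*)$, so that the whole system collapses to a single $m\times m$ equation $\Psi(\nu,A,y)=0$, exactly parallel to the paper's treatment of BP$_p$ in Proposition~\ref{prop:BP_p<2}; the sphere constraint survives only as the rank-deficient projection term $\tfrac{\varepsilon}{\|\nu\|_2}(I-\nu\nu^T/\|\nu\|_2^2)$, and your kernel-intersection argument for positive definiteness (using $(A_{\bullet\Jcal})^Tw=0$, $w\in\mathrm{span}\{\nu\}$, and $(A_{\bullet\Jcal^c})^T\nu=0$ to force $A^T\nu=0$) is sound since $\rank A=m$ and $\nu\ne 0$ on your set $\wh S$. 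Your surrogate $a_i^T\nu$ is just $2\mu\,q_i$ with $\mu>0$, so the two sign arguments agree. What your route buys is economy: an $m\times m$ positive definite Jacobian, no Woodbury identity, and a uniform template with the BP$_p$ case. What the paper's route buys is that the same $(x,\mu)$-machinery transfers verbatim to the other denoising formulation in Theorem~\ref{thm:BPDn02_p}. Two small points you should make explicit if writing this up: (a) the identification of the IFT branch with the actual $\nu(A,y)$ rests on the fact that the KKT conditions are sufficient for this convex problem and $(x^*,\mu)$ is unique, so $\Psi(\cdot,A,y)=0$ has a unique solution; (b) the deferral of $p=2$ is unnecessary --- there $h'\equiv 1/2$, $A\Gamma A^T$ is already positive definite by full row rank, and the argument goes through unchanged.
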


\begin{proof}
    Let $\breve S$  be the set of all $(A, y) \in \mathbb R^{m\times N}\times \mathbb R^m$ such that each column of $A$ is nonzero and $y \ne 0$. Obviously, $\breve S$ is open in $\mathbb R^{m\times N}\times \mathbb R^m$ and its complement has zero measure.
  For any $(A, y) \in \breve S$ and any positive $\varepsilon$ with $\varepsilon<\| y\|_2$, it follows from the proof for Case (ii) in Proposition~\ref{prop:sparsity_lower_bound} that the unique optimal solution $x^* \ne 0$ with a unique positive $\mu$. Further, $(x^*, \mu) \in \mathbb R^{N+1}$ satisfies the following equation:
 \[
 F(x, \mu, A, y) \, := \,  \begin{bmatrix} x_{1} +  h(2 \mu a^T_1 (A x - y)) \\ \vdots \\ x_N + h(2 \mu a^T_N (A x - y)) \\ \| A x - y \|^2_2 -\varepsilon^2 \end{bmatrix} \, = \, 0,
%
%
 \]
 where $h$ is defined in (\ref{eqn:g_h_def}). Hence, $F:\mathbb R^N \times \mathbb R \times \mathbb R^{m\times N} \times \mathbb R^m \rightarrow \mathbb R^{N+1}$ is $C^1$ and its Jacobian with respect to $(x, \mu)$ is given by
 \[
   \mathbf J_{(x, \mu)} F(x, \mu, A, y) \, = \, \begin{bmatrix} V(x, \mu, A, y) & 2 \Gamma(x, \mu, A, y) A^T (A x - y) \\ 2 (Ax - y)^T A & 0 \end{bmatrix} \in \mathbb R^{(N+1)\times (N+1)},
 \]
 where  $\Gamma(x, \mu, A, y):=\mbox{diag}\big( h'(2 \mu a^T_1 (A x - y)), \ldots, h'(2 \mu a^T_N (A x - y))  \big) \in \mathbb R^{N\times N}$, and $V(x, \mu, A, y) := I + \Gamma(x, \mu, A, y) \, 2 \mu A^T A$.

We use the same notation $x^\diam$ and $\mu^\diam$ as before. For any $(A^\diamond, y^\diamond) \in \mathbb R^{m\times N} \times \mathbb R^m$, define the index set $\Jcal := \{ i \, | \, x^\diam_i \ne 0 \}$.
Note that $\Jcal$ is nonempty as $\| y \|_2> \varepsilon$.
  Partition $\Gamma^\diamond:= \Gamma(x^\diamond, \mu^\diamond, A^\diamond, y^\diamond)$ and $A^\diam$ as $\Gamma^\diam= \mbox{diag}( \Gamma_1, \,  \Gamma_2)$ and $A^\diam = \big [ A_1 \ A_2 \big]$ respectively, where $\Gamma_{1} := \mbox{diag}( h'(2 \mu^\diam (a^\diam_i)^T (A^\diam x^\diam - y^\diam)) )_{i\in \Jcal^c}=0$, $\Gamma_{2} := \mbox{diag}( h'(2 \mu^\diam (a^\diam_i)^T (A^\diam x^\diam - y^\diam)) )_{i\in \Jcal}$ is positive definite, $A_1:=A^\diam_{\bullet \Jcal^c}$, and $A_2:=A^\diam_{\bullet \Jcal}$. Therefore, using the fact that $\Gamma_1=0$ and $\Gamma_2$ is positive definite, we obtain
  \begin{equation} \label{eqn:Jacobian_BPDn_p<2}
    \mathbf J_{(x, \mu)} F(x^\diam, \mu^\diam, A^\diam, y^\diam) \, = \, \begin{bmatrix}
     I & 0 & 0 \\
   2 \mu^\diam \Gamma_2 A^T_2 A_1 & I + 2 \mu^\diamond \Gamma_2 A^T_2 A_2 & 2 \Gamma_2 A^T_2 (A^\diamond x^\diamond - y^\diamond) \\ 2 (A^\diamond x^\diamond - y^\diamond)^T A_1 & 2 (A^\diamond x^\diamond - y^\diamond)^T A_2 & 0 \end{bmatrix}.
  \end{equation}
 Since $\Gamma_2$ is positive definite, the lower diagonal block in $\mathbf J_{(x, \mu)} F(x^\diam, \mu^\diam, A^\diam, y^\diam)$ becomes
 \begin{equation} \label{eqn:J_block}
     \begin{bmatrix}
       I + 2 \mu^\diamond \Gamma_2 A^T_2 A_2 & 2 \Gamma_2 A^T_2 (A^\diamond x^\diamond - y^\diamond) \\ 2 (A^\diamond x^\diamond - y^\diamond)^T A_2 & 0 \end{bmatrix} \, = \, \begin{bmatrix} \Gamma_2 & 0 \\ 0 & I \end{bmatrix}  \cdot \underbrace{ \begin{bmatrix}
        \Gamma^{-1}_2 + 2 \mu^\diamond  A^T_2 A_2 & 2 A^T_2 (A^\diamond x^\diamond - y^\diamond) \\ 2 (A^\diamond x^\diamond - y^\diamond)^T A_2 & 0 \end{bmatrix} }_{:=Q}.
 \end{equation}
 Clearly, $\Gamma^{-1}_2 + 2 \mu^\diamond  A^T_2 A_2$ is positive definite. Further,
 since $\mu^\diamond>0$ and $x^\diamond_i \ne 0, \forall \, i \in \Jcal$, we have $A^T_2 (A^\diamond x^\diamond - y^\diamond ) \ne 0$. Hence, by the similar argument for Proposition~\ref{prop:full_support_BPDN_P>2}, we see that the matrix $Q$ is invertible such that $\mathbf J_{(x, \mu)} F(x^\diam, \mu^\diam, A^\diam, y^\diam)$ is invertible. By the implicit function theorem,
 there are local $C^1$ functions $G_1, G_2, H$ such that $x^*=(x^*_{\Jcal^c}, x^*_\Jcal)=(G_1(A, y), G_2(A, y)):=G(A, y)$, $\mu=H(A, y)$, and $F(G(A, y), H(A, y), A, y)=0$ for all $(A, y)$ in a neighborhood of $(A^\diam, y^\diam)$.  By the chain rule, we obtain
 \[
    \mathbf J_{(x, \mu)} F(x^\diam, \mu^\diam, A^\diam, y^\diam) \cdot \begin{bmatrix} \nabla_y G_1(A^\diam, y^\diam) \\ \nabla_y G_2(A^\diam, y^\diam) \\ \nabla_y H(A^\diam, y^\diam) \end{bmatrix} \, = \, - \mathbf J_y F(x^\diam, \mu^\diam, A^\diam, y^\diam) \, = \, \begin{bmatrix} 0 \\  \Gamma_2 \, 2  \mu^\diam A^T_2 \\  2 (A^\diam x^\diam - y^\diam)^T \end{bmatrix}, 
 \]
 where we use the fact that $\Gamma_1=0$.
%
%
 In view of (\ref{eqn:Jacobian_BPDn_p<2}) and the above results, we have $\nabla_y G_1(A^\diam, y^\diam) =0 $, and we deduce via (\ref{eqn:J_block}) that
 \begin{equation} \label{eqn:partial_derive_G2_mu}
  \begin{pmatrix} \nabla_y  G_2(A^\diam, y^\diam) \\ \nabla_y  H(A^\diam, y^\diam)  \end{pmatrix} \, = \, \begin{bmatrix} \Gamma^{-1}_2 + 2 \mu^\diamond A^T_2 A_2 & 2 A^T_2 (A^\diamond x^\diamond - y^\diamond) \\  2 (A^\diamond x^\diamond - y^\diamond)^T A_2 & 0 \end{bmatrix}^{-1} \cdot
  \begin{pmatrix}2 \mu^\diam A^T_2 \\ 2 (A^\diamond x^\diamond - y^\diamond)^T \end{pmatrix},
\end{equation}
where $A^\diamond x^\diamond - y^\diamond \ne 0$ because otherwise $\|A^\diamond x^\diamond - y^\diamond \|^2_2-\varepsilon^2 \ne 0$.

For each $i\in \Jcal^c$, define $q_i(A, y):=a^T_i(y- A \cdot x^*(A, y))$. It follows from $\mbox{sgn}(x^*_i(A, y)) =\mbox{sgn}(q_i(A, y))$ and the previous argument that it suffices to show that $\nabla_y \, q_i(A^\diam, y^\diam) \ne 0$ for each $i \in \Jcal^c$, where $\nabla_y \, q_i(A^\diam, y^\diam) = (a^\diam_i)^T (I-A_2 \cdot \nabla_y \, G_2(A^\diam, y^\diam) )$. Toward this end, we see, by using $(a^\diam_i)^T (A^\diamond x^\diamond - y^\diamond) = 0, \forall \, i \in \mathcal J^c$ and (\ref{eqn:partial_derive_G2_mu}), that for each $i \in \Jcal^c$,
\begin{align*}
\lefteqn{  (a^\diam_i)^T A_2 \nabla_y G_2(A^\diam, y^\diam)  \, = \, \frac{(a^\diam_i)^T}{2\mu^\diamond} \begin{bmatrix} 2\mu^\diamond A^\diamond_2 & 2(A^\diamond x^\diamond - y^\diamond) \end{bmatrix} \cdot \begin{pmatrix} \nabla_y  G_2(A^\diam, y^\diam) \\ \nabla_y  H(A^\diam, y^\diam)  \end{pmatrix} }\\
   & \, = \,
   \frac{(a^\diam_i)^T}{2\mu^\diamond} \begin{bmatrix} 2\mu^\diamond A^\diamond_2 & 2(A^\diamond x^\diamond - y^\diamond) \end{bmatrix} \cdot
   \begin{bmatrix} \Gamma^{-1}_2 + 2 \mu^\diamond A^T_2 A_2 & 2 A^T_2 (A^\diamond x^\diamond - y^\diamond) \\  2 (A^\diamond x^\diamond - y^\diamond)^T A_2 & 0 \end{bmatrix}^{-1} \cdot
  \begin{bmatrix}2 \mu^\diam A^T_2 \\ 2 (A^\diamond x^\diamond - y^\diamond)^T \end{bmatrix}.
\end{align*}
Define
\[
   d \, := \, A^\diamond x^\diamond - y^\diamond \ne 0, \quad C \, := \, \begin{bmatrix} \, \sqrt{2 \mu^\diam} \cdot A_2, & \sqrt{\frac{2}{\mu^\diam}} \cdot d  \, \end{bmatrix}, \quad D \, := \, \begin{bmatrix} \Gamma^{-1}_2 & 0 \\ 0 & - \frac{2}{\mu^\diam} \| d \|^2_2  \end{bmatrix}.
\]
It is easy to verify that
\[
  \begin{bmatrix} \Gamma^{-1}_2 + 2 \mu^\diamond A^T_2 A_2 & 2 A^T_2 (A^\diamond x^\diamond - y^\diamond) \\  2 (A^\diamond x^\diamond - y^\diamond)^T A_2 & 0 \end{bmatrix} \, = \, D + C^T C.
\]
Therefore, we obtain
 \[
   (a^\diam_i)^T \big( I- A_2 \cdot \nabla_y G_2(A^\diam, y^\diam)  \big) \, = \, (a^\diam_i)^T - (a^\diam_i)^T C (D + C^T C)^{-1} C^T.
 \]
 Recall that $\Jcal$ is nonempty such that $A_2$ exists and $A^T_2 d \ne 0$.
 Since $A_2 \Gamma_2 A^T_2$ and $I - \frac{d d^T}{\| d \|^2_2}$ are both positive semi-definite and $N(I - \frac{d d^T}{\| d \|^2_2})=\mbox{span}\{ d \}$, it is easy to see that $N(A_2 \Gamma_2 A^T_2)\cap  N(I - \frac{d d^T}{\| d \|^2_2}) =\{ 0 \}$. Hence, the following matrix is positive definite:
 \[
   I + C D^{-1} C^T \, = \, 2 \mu^\diam A_2 \Gamma_2 A^T_2 + I - \frac{d d^T}{\| d \|^2_2}.
 \]
 By the Sherman-Morrison-Woodbury formula \cite[Section 3.8]{Meyer_book00}, we have
 \[
  C (D + C^T C)^{-1} C^T \, = \, I - (I + C D^{-1} C^T)^{-1}.
 \]
  Consequently, for any $(A^\diam, y^\diam)\in \breve S$, it follows from
 that $a^\diam_i \ne 0, \forall \, i$ that for each $i \in \Jcal^c$,
 \begin{align*}
  \nabla_y \, q_i(A^\diamond, y^\diamond) & \, = \, (a^\diam_i)^T \big(I- A_2 \cdot \nabla_y G_2(A^\diam, y^\diam)  \big) =  (a^\diam_i)^T - (a^\diam_i)^T C (D + C^T C)^{-1} C^T  \\
    & = (a^\diam_i)^T (I + C D^{-1} C^T)^{-1} \, \ne \, 0.
 \end{align*}
By Lemma~\ref{lem:measure_of_zero_set},  the zero set of $x^*_i(A, y)$ has zero measure for each $i=1, \ldots, N$. Therefore, $| \mbox{supp}(x^*(A, y))| = N$ for almost all $(A, y) \in \mathbb R^{m\times N} \times \mathbb R^m$.
\end{proof}

Putting Propositions~\ref{prop:full_support_BPDN_P>2} and \ref{prop:BPDN_p<2} together, we obtain the following result.

\begin{theorem} \label{thm:BPDn_p}
  Let $p>1$ and $N \ge m$. For almost all $(A, y) \in \mathbb R^{m\times N} \times \mathbb R^m$ with $y \ne 0$, if $0<\varepsilon<\| y \|_2$, then the unique optimal solution $x^*_{(A, y)}$ to the BPDN$_p$ (\ref{eqn:BP_denoising01}) satisfies $| \mbox{supp}(x^*_{(A, y)})| = N$.
\end{theorem}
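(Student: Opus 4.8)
The plan is to derive Theorem~\ref{thm:BPDn_p} simply by combining the two regime-specific results already proved, namely Proposition~\ref{prop:full_support_BPDN_P>2} for $p\ge 2$ and Proposition~\ref{prop:BPDN_p<2} for $1<p\le 2$. Fix an arbitrary $p>1$. Exactly one of the two cases $p\ge 2$ or $1<p\le 2$ applies (both apply, consistently, at $p=2$). If $p\ge 2$, Proposition~\ref{prop:full_support_BPDN_P>2} furnishes a set $W\subseteq\mathbb R^{m\times N}\times\mathbb R^m$ whose complement has zero Lebesgue measure such that for every $(A,y)\in W$ with $y\ne 0$ and every $\varepsilon\in(0,\|y\|_2)$, the unique optimal solution $x^*_{(A,y)}$ of the BPDN$_p$ (\ref{eqn:BP_denoising01}) satisfies $|\mbox{supp}(x^*_{(A,y)})|=N$. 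If instead $1<p\le 2$, Proposition~\ref{prop:BPDN_p<2} provides the analogous full-measure set $W$ with the identical conclusion. In either case the desired statement holds on $W$, which proves the theorem.

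The only point worth a sentence of care is the quantification over $p$: the theorem is stated for all $p>1$, while the two propositions split $(1,\infty)$ into the overlapping sub-intervals $[2,\infty)$ and $(1,2]$. Since these sub-intervals cover $(1,\infty)$, no value of $p$ is missed, and at the single overlap point $p=2$ both propositions yield the same conclusion, so there is no inconsistency. There is genuinely no new obstacle at this stage: all of the analytic machinery — expressing $(x^*,\mu)$ as a $C^1$ function of $(A,y)$ on a suitable open full-measure set via the KKT system of Proposition~\ref{prop:sparsity_lower_bound}(ii) and the implicit function theorem, establishing invertibility of the relevant bordered Jacobians (using the positive Lagrange multiplier and complete full rank of $A$), and, in the subtler range $1<p\le 2$, replacing each vanishing component $x^*_i$ by a surrogate $q_i(A,y)$ with $\mbox{sgn}(x^*_i)=\mbox{sgn}(q_i(A,y))$ and nonvanishing $y$-gradient — has already been carried out in the proofs of Propositions~\ref{prop:full_support_BPDN_P>2} and \ref{prop:BPDN_p<2}, with Lemma~\ref{lem:measure_of_zero_set} converting "nonvanishing gradient on an open full-measure set" into "zero set of zero measure." Accordingly, the proof of the theorem itself is a one-line case distinction, and I would present it as such.
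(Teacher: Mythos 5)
Your proposal is correct and is exactly what the paper does: the theorem is stated immediately after the sentence ``Putting Propositions~\ref{prop:full_support_BPDN_P>2} and \ref{prop:BPDN_p<2} together, we obtain the following result,'' so the intended proof is precisely the one-line case distinction on $p\ge 2$ versus $1<p\le 2$ that you describe. Your remark that the two overlapping ranges cover $(1,\infty)$ and agree at $p=2$ is the only point needing care, and you have handled it.
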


Next, we extend the above result to the optimization problem (\ref{eqn:BP_denoising02}) pertaining to another  version of the generalized basis pursuit denoising under a suitable assumption on $\eta$.
%
%
Since its proof follows an argument similar to that for Theorem~\ref{thm:BPDn_p}, we will be concise on the overlapping parts.

\begin{theorem} \label{thm:BPDn02_p}
  Let $p>1$ and $N \ge m$. For almost all $(A, y) \in \mathbb R^{m\times N} \times \mathbb R^m$ with $y\in R(A)$, if $0<\eta<\min_{Ax = y} \| x\|_p$, then the unique optimal solution $x^*_{(A, y)}$ to (\ref{eqn:BP_denoising02}) satisfies $| \mbox{supp}(x^*_{(A, y)})| = N$.
\end{theorem}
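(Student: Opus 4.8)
The plan is to run the two-case scheme of the proof of Theorem~\ref{thm:BPDn_p}. By Proposition~\ref{prop:BPDN2_lower_bound}, for $(A,y)$ in the open set $S$ of~(\ref{eqn:set_S}) --- on which $y\in R(A)$ holds automatically, since completely full rank forces full row rank --- and $0<\eta<\min_{Ax=y}\|x\|_p$, the problem~(\ref{eqn:BP_denoising02}) has a unique optimal solution $x^*$ with a unique multiplier $\mu>0$, the constraint is active ($\|x^*\|_p=\eta$, so $x^*\ne0$), and $|\supp(x^*)|\ge N-m+1$. For a reference point $(A^\diam,y^\diam)$ I set $\Jcal:=\{i:x^\diam_i\ne0\}$, partition $A^\diam=[A_1\ A_2]$ with $A_1:=A^\diam_{\bullet\Jcal^c}$ (whose columns are linearly independent since $|\Jcal^c|\le m-1$), and, after the implicit function theorem, write $x^*=(x^*_{\Jcal^c},x^*_\Jcal)=(G_1(A,y),G_2(A,y))$, with $f,g,h$ as in~(\ref{eqn:g_h_def}). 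The goal, via Lemma~\ref{lem:measure_of_zero_set}, is to exhibit for each $i$ a $C^1$ function of $(A,y)$ of the same sign as $x^*_i$ whose gradient is nonzero wherever it vanishes. As elsewhere, the argument splits into $p\ge2$ and $1<p\le2$.

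\textbf{Case $p\ge2$.} Since $\|\cdot\|_p^p$ is then $C^2$, I would take the $C^1$ map
\[
   F(x,\mu,A,y)\,:=\,\bigl(\,A^T(Ax-y)+\mu\nabla f(x)\,;\ \|x\|_p^p-\eta^p\,\bigr),
\]
whose Jacobian in $(x,\mu)$ is the bordered matrix with $(1,1)$-block $A^TA+\mu\Lambda(x)$ (where $\Lambda(x)=\diag(g'(x_i))\succcurlyeq0$), $(1,2)$-block $\nabla f(x)$, $(2,1)$-block $\nabla f(x)^T$, and corner $0$. The block argument of the proof of Theorem~\ref{thm:RR_p}(i) shows $M:=A^TA+\mu^\diam\Lambda(x^\diam)$ is positive definite (the $\Jcal$-diagonal block of $\mu^\diam\Lambda(x^\diam)$ is positive definite, and the $\Jcal^c$-part sees only the independent columns of $A_1$); since $\nabla f(x^\diam)\ne0$ (as $x^\diam\ne0$), the bordered matrix is nonsingular --- a vector in its kernel has its $\mu$-component killed by $b^TM^{-1}b>0$, then its $x$-component vanishes. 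The implicit function theorem then gives $(x^*,\mu)$ as a local $C^1$ function of $(A,y)$ on $S$, and by the chain-rule argument of the proof of Proposition~\ref{prop:full_support_BPDN_P>2} --- here simpler, because the constraint is a function of $x$ alone (so the bottom row of $\mathbf J_yF$ vanishes) and $(\nabla f(x^\diam))_{\Jcal^c}=0$ --- one gets $\nabla_yG_1(A^\diam,y^\diam)\cdot A_1=I$. Hence the $y$-gradient of $x^*_i$ is nonzero for every $i\in\Jcal^c$, and Lemma~\ref{lem:measure_of_zero_set} finishes this case.

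\textbf{Case $1<p\le2$.} Now $\|\cdot\|_p^p$ is only $C^1$, so the constraint cannot be fed to the implicit function theorem as is; the key device is to eliminate it. Along the gradient locus $g(x^*_i)=\mu^{-1}a_i^T(y-Ax^*)$ one has $p\,\|x^*\|_p^p=\langle x^*,\nabla f(x^*)\rangle=\mu^{-1}\langle Ax^*,\,y-Ax^*\rangle$, so the active constraint $\|x^*\|_p=\eta$ is equivalent to the \emph{polynomial} identity $(Ax^*)^Ty-\|Ax^*\|_2^2=p\mu\eta^p$. I would therefore work on the open set $\breve S$ of $(A,y)$ with $A$ of full row rank, $y\ne0$, and every column of $A$ nonzero, using the map (which is $C^1$ where $\mu\ne0$, since $h=g^{-1}\in C^1$ for $1<p\le2$)
\[
   F(x,\mu,A,y)\,:=\,\Bigl(\,x_i-h\bigl(\mu^{-1}a_i^T(y-Ax)\bigr)\,\Bigr)_{i=1}^{N}\ \text{ together with }\ (Ax)^Ty-\|Ax\|_2^2-p\mu\eta^p .
\]
Evaluating $\mathbf J_{(x,\mu)}F$ at the solution and partitioning by $(\Jcal^c,\Jcal,\mu)$, the identity $h'(0)=0$ forces the $\Jcal^c$-rows to be $[\,I\ \ 0\ \ 0\,]$, so nonsingularity reduces to that of a bordered block with leading $|\Jcal|\times|\Jcal|$ part $I+\mu^{-1}\Gamma_2A_2^TA_2$ ($\Gamma_2$ the positive-definite $\Jcal$-block of $\diag(h'(\cdot))$), corner $-p\eta^p$, and off-diagonal entries $c=\mu^{-1}\Gamma_2\tilde b$ and $\beta_2=\mu\tilde b-A_2^TA_2x^\diam_\Jcal$, where $\tilde b:=(\nabla f(x^\diam))_\Jcal\ne0$. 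Using the identities $\Gamma_2^{-1}x^\diam_\Jcal=(p-1)\tilde b$ and $\langle x^\diam,\nabla f(x^\diam)\rangle=p\,\|x^\diam\|_p^p=p\eta^p$, the Schur complement of the leading block collapses to $-p\,\tilde b^T\bigl(\Gamma_2^{-1}+\mu^{-1}A_2^TA_2\bigr)^{-1}\tilde b\ne0$. After the implicit function theorem one finds $\nabla_yG_1\equiv0$, so Lemma~\ref{lem:measure_of_zero_set} cannot be applied to $x^*_i$ directly; instead, exactly as in the proof of Proposition~\ref{prop:BPDN_p<2}, it is applied to $q_i(A,y):=a_i^T\bigl(y-A\,x^*(A,y)\bigr)$ (which has the same sign as $x^*_i$, since $\mu>0$), for which $\nabla_yq_i(A^\diam,y^\diam)=(a^\diam_i)^T\bigl(I-A_2\nabla_yG_2(A^\diam,y^\diam)\bigr)$; a Sherman--Morrison--Woodbury manipulation parallel to that proof shows $I-A_2\nabla_yG_2$ is nonsingular, hence $\nabla_yq_i\ne0$ because $a^\diam_i\ne0$.

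The hard part is the case $1<p\le2$, on two counts. First, structurally: the constraint $\|x\|_p\le\eta$ is nonsmooth precisely at the optimal solutions that are not yet known to have full support, so one must first replace the active constraint by the smooth polynomial identity $(Ax)^Ty-\|Ax\|_2^2=p\mu\eta^p$ before any implicit-function argument is available. Second, computationally: proving the resulting Jacobian nonsingular requires carrying the Schur-complement computation far enough for the dangerous terms to cancel --- a cancellation that rests on the exact identities $\Gamma_2^{-1}x^\diam_\Jcal=(p-1)\tilde b$ (a homogeneity relation between $h$ and $g$) and $\langle x^\diam,\nabla f(x^\diam)\rangle=p\,\|x^\diam\|_p^p$ --- followed by the analogous Sherman--Morrison--Woodbury bookkeeping for $\nabla_yq_i\ne0$.
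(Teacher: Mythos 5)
Your overall architecture is the paper's: Proposition~\ref{prop:BPDN2_lower_bound} supplies uniqueness, a positive multiplier, activity of the constraint and $|\Jcal^c|\le m-1$; the implicit function theorem is applied to the KKT system augmented by the active constraint; for $p\ge 2$ one reads off $\nabla_y G_1(A^\diam,y^\diam)\cdot A_1=I$ from the block with $(\nabla f(x^\diam))_{\Jcal^c}=0$ and a vanishing $y$-derivative of the constraint row; for $1<p\le 2$ one gets $\nabla_y G_1=0$ and passes to the sign surrogate $q_i(A,y)=a_i^T(y-Ax^*(A,y))$. Your $p\ge 2$ case is correct and essentially identical to the paper's (the paper merely rescales the multiplier to $\wt\mu=1/\mu$ so that the stationarity rows read $g(x_i)+\wt\mu\,a_i^T(Ax-y)=0$; this changes nothing). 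Where you genuinely diverge is the case $1<p\le 2$: you replace the active constraint $\|x\|_p^p=\eta^p$ by the polynomial identity $(Ax)^Ty-\|Ax\|_2^2=p\mu\eta^p$. This replacement is \emph{valid} (the identity $\langle x,\nabla f(x)\rangle=p\|x\|_p^p$ makes the two systems equivalent on the locus of the first $N$ equations, and your Schur-complement computation, which I checked via $\Gamma_2^{-1}x^\diam_{\Jcal}=(p-1)\wt b$ and $\langle x^\diam,\nabla f(x^\diam)\rangle=p\eta^p$, correctly collapses to $-p\,\wt b^T(\Gamma_2^{-1}+\mu^{-1}A_2^TA_2)^{-1}\wt b\ne 0$), but it is \emph{unnecessary}: the implicit function theorem only requires the defining map to be $C^1$, and $x\mapsto\|x\|_p^p-\eta^p$ is $C^1$ for every $p>1$ (its gradient $\nabla f$ is continuous even at points with zero components; only the second derivative fails). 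The paper therefore keeps $f(x)-\eta^p$ as the last equation in both cases, which buys a bordered matrix with zero corner and borders proportional to the single nonzero vector $A_2^T(A^\diam x^\diam-y^\diam)$, so both the invertibility argument and the subsequent Sherman--Morrison--Woodbury computation are shorter.

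The one step you should not leave as an assertion is the final claim that $I-A_2\nabla_y G_2(A^\diam,y^\diam)$ is nonsingular ``by a manipulation parallel to Proposition~\ref{prop:BPDN_p<2}.'' In your system it is not literally parallel: the corner of your bordered matrix is $-p\eta^p\ne 0$ and the $y$-derivative of your modified constraint is $-(A^\diam x^\diam)^T\ne 0$, so the formula for $\nabla_y G_2$ picks up extra terms that are absent in Proposition~\ref{prop:BPDN_p<2} and in the paper's proof of this theorem (where the constraint row contributes nothing to $\mathbf J_yF$). The conclusion is still true --- $\nabla_y G_2$ is the derivative of the solution map and does not depend on which equivalent $C^1$ defining system you differentiate, and the paper's computation exhibits $I-A_2\nabla_y G_2$ as $(I+\wt\mu A_2\Gamma_2A_2^T)^{-1}$ plus a positive semi-definite rank-one correction, hence positive definite --- but within your framework this has to be recomputed, and it is the messiest piece of the whole argument. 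Either carry that computation through or, better, drop the polynomial reformulation and work with $f(x)-\eta^p$ directly.
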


\begin{proof}
 We consider two cases as follows: (i) $p \ge 2$, and (ii) $1<p\le 2$.

 (i) $p\ge 2$. Consider the set $S$ defined in (\ref{eqn:set_S}). Clearly, $A$ has full row rank and $y \in R(A)$ for any $(A, y) \in S$.
%
%
It follows from Proposition~\ref{prop:BPDN2_lower_bound} that  the optimal solution $x^*$ is unique and the associated unique Lagrange multiplier $\mu$ is positive. Define $\wt \mu:=1/\mu>0$. Hence, $(x^*, \wt\mu)$ is a function of $(A, y)$ on $S$ and satisfies the following equation obtained from (\ref{eqn:BPDN02_KKT}):
 \[
 F(x, \wt\mu, A, y) \, := \,
  \begin{bmatrix} g(x_{1}) + \wt \mu \, a^T_1 (A x - y) \\ \vdots \\ g(x_N) +  \wt\mu \, a^T_N (A x - y) \\  f(x) -\eta^p \end{bmatrix} \, = \, 0.
  %
 \]
 Clearly, $F:\mathbb R^N \times \mathbb R \times \mathbb R^{m\times N} \times \mathbb R^m \rightarrow \mathbb R^{N+1}$ is $C^1$ and its Jacobian with respect to $(x, \wt \mu)$ is
 \[
   \mathbf J_{(x, \wt \mu)} F(x, \wt\mu, A, y) \, = \, \begin{bmatrix} M(x, \wt\mu, A) & A^T(A x - y) \\ (\nabla f(x))^T & 0 \end{bmatrix},
 \]
 where $M(x, \mu, A) : =  \Lambda(x) + \wt \mu A^T A$, and the diagonal matrix $\Lambda(x):=\diag(g'(x_1), \ldots, g'(x_N))$ is positive semi-definite.
  For any $(A^\diamond, y^\diamond) \in S$, we use the same notation $x^\diam$, $\wt\mu^\diam$, $\Jcal$, $\Lambda^\diam=\diag(\Lambda_1, \Lambda_2)$ and $A^\diam=[ A_1 \ A_2]$  as before, where $\Lambda_1=0$. Note that in light of $N \ge m$ and the second statement of Proposition~\ref{prop:BPDN2_lower_bound}, we have $|\mbox{supp}(x^\diam)| \ge N-m+1 \ge 1$ such that the index set $\Jcal$ is nonempty. It follows from (\ref{eqn:BPDN02_KKT}) that $\nabla f(x^\diam)+ \wt \mu^\diam \cdot (A^\diam)^T (A^\diam x^\diam-y^\diam)=0$. Further, $\nabla f(x^\diam)\ne 0$ and $(A^\diam)^T (A^\diam x^\diam-y^\diam) \ne 0$. Therefore, using the similar argument as in Proposition~\ref{prop:full_support_BPDN_P>2}, we deduce that $\mathbf J_{(x, \wt \mu)} F(x^\diam, \wt\mu^\diam, A^\diam, y^\diam)$ is invertible for any $(A^\diamond, y^\diamond) \in S$.
By the implicit function theorem, there are local $C^1$ functions $G_1, G_2, H$ such that $x^*=(x^*_{\Jcal^c}, x^*_\Jcal)=(G_1(A, y), G_2(A, y)):=G(A, y)$, $\wt\mu=H(A, y)$, and $F(G(A, y), H(A, y), A, y)=0$ for all $(A, y)$ in a neighborhood of $(A^\diam, y^\diam)$.
  By the chain rule, we have
\[
  \mathbf J_{(x, \wt\mu)} F(x^\diam, \wt\mu^\diam, A^\diam, y^\diam) \cdot \begin{bmatrix} \nabla_y G_1(A^\diam, y^\diam) \\ \nabla_y G_2(A^\diam, y^\diam) \\ \nabla_y H(A^\diam, y^\diam) \end{bmatrix} \, = \, - \mathbf J_{y} F(x^\diam, \wt\mu^\diam, A^\diam, y^\diam) \, = \, \begin{bmatrix} \wt \mu^\diam A^T_1 \\ \wt \mu^\diam A^T_2 \\  0 \end{bmatrix}.
\]
Since $\Lambda_{1}=0$ and $g(x^\diam_i)=0$ for each $i\in \Jcal^c$, we have
\[
     \mathbf J_{(x, \wt\mu)} F(x^\diam, \wt\mu^\diam, A^\diam, y^\diam)  = \begin{bmatrix}  \wt\mu^\diamond A^T_1 A_1 &  \wt \mu^\diam  A^T_1 A_2 &  0 \\  \wt \mu^\diam A^T_2 A_1 & \Lambda_2 +  \wt\mu^\diam  A^T_2 A_2 &  \star \\ 0 & \star & 0 \end{bmatrix}.
\]
Let $P =\begin{bmatrix} P_{11} & P_{12} & P_{13} \\ P_{21} & P_{22} & P_{23} \\ P_{31} & P_{32} & P_{33} \end{bmatrix}$ be the inverse of $\mathbf J_{(x, \wt\mu)} F(x^\diam, \wt\mu^\diam, A^\diam, y^\diam)$.
Hence, we have
$
  P_{11}  \wt \mu^\diamond A^T_1 A_1 + P_{12} \, \wt \mu^\diam A^T_2 A_1  = I_m.
$
Since
$
  \nabla_y G_1(A^\diam, y^\diam) = -[ P_{11} \ P_{12} \ P_{13} ] \cdot \mathbf J_{y} F(x^\diam, \mu^\diam, A^\diam, y^\diam) = P_{11}   \wt \mu^\diamond A^T_1  + P_{12} \, \wt \mu^\diam A^T_2,
$
we have $\nabla_y G_1(A^\diam, y^\diam) \cdot A_1 = I_m$. This shows that each row of $\nabla_y G_1(A^\diam, y^\diam)$ is nonzero. Hence, following the exactly same argument as in Proposition~\ref{prop:full_support_BPDN_P>2}, we deduce that $| \mbox{supp}(x^*(A, y))| = N$ for almost all $(A, y) \in \mathbb R^{m\times N} \times \mathbb R^m$.

(ii) $1<p\le 2$. Let $\grave S$ be the set of $(A, y)\in \mathbb R^{m\times N}\times \mathbb R^m$ with $N \ge m$ such that $y \ne 0$, each column of $A$ is nonzero, and $A$ has full row rank. Hence, $y\in R(A)$ for any $(A, y) \in \grave S$.
%
%
By Proposition~\ref{prop:BPDN2_lower_bound}, we see that (\ref{eqn:BP_denoising02}) attains a unique optimal solution $x^*$  and a unique Lagrange multiplier $\mu>0$ for any $(A, y)\in \grave S$. Define $\wt \mu:=1/\mu>0$. Hence, $(x^*, \wt\mu)$, as a function of $(A, y)$ on $\grave S$, satisfies the following equation:
\[
 F(x, \mu, A, y) \, := \,  \begin{bmatrix} x_{1} + h(\wt\mu a^T_1 (A x - y)) \\ \vdots \\ x_N + h(\wt\mu a^T_N (A x - y))\\ f(x) - \eta^p  \end{bmatrix} \, = \, 0.
 \]
 Here $F:\mathbb R^N \times \mathbb R \times \mathbb R^{m\times N} \times \mathbb R^m \rightarrow \mathbb R^{N+1}$ is $C^1$ and its Jacobian with respect to $(x, \wt \mu)$ is given by
 \[
   \mathbf J_{(x, \wt \mu)} F(x, \wt \mu, A, y) \, = \, \begin{bmatrix} V(x, \wt \mu, A, y) &  \Gamma(x, \wt \mu, A, y) A^T (A x - y) \\ (\nabla f(x))^T & 0 \end{bmatrix} \in \mathbb R^{(N+1)\times (N+1)},
 \]
 where  $\Gamma(x, \wt\mu, A, y):=\mbox{diag}\big( h'(\wt\mu a^T_1 (A x - y)), \ldots, h'(\wt\mu a^T_N (A x - y))  \big) \in \mathbb R^{N\times N}$, and $V(x, \wt \mu, A, y) := I + \Gamma(x, \wt \mu, A, y) \, \wt \mu A^T A$. Using the same notation introduced in Proposition~\ref{prop:BPDN_p<2}, we deduce that for any $(A^\diam, y^\diam)\in \grave S$,
 \begin{equation*} 
    \mathbf J_{(x, \wt\mu)} F(x^\diam, \wt \mu^\diam, A^\diam, y^\diam) \, = \, \begin{bmatrix}
     I & 0 & 0 \\
   \star & I + \wt \mu^\diamond \Gamma_2 A^T_2 A_2 &  \Gamma_2 A^T_2 (A^\diamond x^\diamond - y^\diamond) \\ \star & v^T & 0 \end{bmatrix},
  \end{equation*}
 where the column vector $v:=(g(x^\diam_i))_{i\in \mathcal J}$. Note that the index set $\mathcal J$ is nonempty since $y \ne 0$ and $A$ has full row rank such that $A^T y \ne 0$. In view of (\ref{eqn:BPDN02_KKT}), we have $v = -\wt\mu^\diam A^T_2 (A^\diamond x^\diamond - y^\diamond)$, where $\wt\mu^\diam>0$. This result, along with the similar argument for (\ref{eqn:J_block}), shows that $\mathbf J_{(x, \wt \mu)} F(x^\diam, \wt \mu^\diam, A^\diam, y^\diam)$ is invertible.
 Therefore, there are local $C^1$ functions $G_1, G_2, H$ such that $x^*=(x^*_{\Jcal^c}, x^*_\Jcal)=(G_1(A, y), G_2(A, y)):=G(A, y)$, $\wt\mu=H(A, y)$, and $F(G(A, y), H(A, y), A, y)=0$ for all $(A, y)$ in a neighborhood of $(A^\diam, y^\diam)$.  Moreover,
 \[
    \mathbf J_{(x, \wt \mu)} F(x^\diam, \wt \mu^\diam, A^\diam, y^\diam) \cdot \begin{bmatrix} \nabla_y G_1(A^\diam, y^\diam) \\ \nabla_y G_2(A^\diam, y^\diam) \\ \nabla_y H(A^\diam, y^\diam) \end{bmatrix} \, = \, - \mathbf J_y F(x^\diam,  \wt \mu^\diam, A^\diam, y^\diam) \, = \, \begin{bmatrix} 0 \\  \Gamma_2 \, \wt  \mu^\diam A^T_2 \\  0 \end{bmatrix}, 
 \]
 where the fact that $\Gamma_1=0$ is used. Therefore,  we have $\nabla_y G_1(A^\diam, y^\diam) =0$, and
 \begin{equation*} 
  \begin{pmatrix} \nabla_y  G_2(A^\diam, y^\diam) \\ \nabla_y  H(A^\diam, y^\diam)  \end{pmatrix} \,  = \, \begin{bmatrix} \Gamma^{-1}_2 + \wt \mu^\diamond A^T_2 A_2 &  A^T_2 (A^\diamond x^\diamond - y^\diamond) \\  (A^\diamond x^\diamond - y^\diamond)^T A_2 & 0 \end{bmatrix}^{-1} \cdot
  \begin{pmatrix} \wt \mu^\diam A^T_2 \\ 0 \end{pmatrix},
\end{equation*}
 In what follows, define $b:=A^T_2(A^\diamond x^\diamond - y^\diamond) \ne 0$ and $M:=\Gamma^{-1}_2 + \wt \mu^\diamond A^T_2 A_2$, which is positive definite.

For each $i\in \Jcal^c$, define $q_i(A, y):=(a^\diam_i)^T (y-A \cdot x^*(A, y))$. It suffices to show that $\nabla_y \, q_i(A^\diam, y^\diam) \ne 0$ for each $i \in \Jcal^c$, where $\nabla_y \, q_i(A^\diam, y^\diam) = (a^\diam_i)^T (I-A_2 \cdot \nabla_y \, G_2(A^\diam, y^\diam) )$. Direct calculations show that
\begin{align*}
  I - A_2 \cdot \nabla_y \, G_2(A^\diam, y^\diam)  & = I - [ A_2 \ \, 0 ] \cdot \begin{bmatrix} M & b \\ b^T & 0\end{bmatrix}^{-1}  \cdot \begin{pmatrix} \wt \mu^\diam A^T_2 \\ 0 \end{pmatrix}  = I - A_2 M^{-1}\Big(I - \frac{b b^T M^{-1}}{b^T M^{-1} b} \Big) \wt\mu^\diam A^T_2  \\
  & = I- A_2 M^{-1} \wt\mu^\diam A^T_2  + \wt \mu^\diam \frac{A_2 M^{-1} b b^T M^{-1}A^T_2}{b^T M^{-1}  b}.
\end{align*}
By the definition of $M$ and the Sherman-Morrison-Woodbury formula \cite[Section 3.8]{Meyer_book00}, we have $I- A_2 M^{-1} \wt\mu^\diam A^T_2  = (I+ \wt \mu^\diam A_2 \Gamma_2 A^T_2)^{-1}$, which is positive definite. Hence, $I-A_2 \cdot \nabla_y \, G_2(A^\diam, y^\diam)$ is positive definite and thus invertible. Since $a^\diam_i \ne 0$, we have $\nabla_y \, q_i(A^\diam, y^\diam) \ne 0$ for each $i \in \Jcal^c$. Consequently, $| \mbox{supp}(x^*(A, y))| = N$ for almost all $(A, y) \in \mathbb R^{m\times N} \times \mathbb R^m$.
\end{proof}

%
\section{Extension and Comparison} \label{sect:comparison}


In this section, we extend the least sparsity results to constrained measurement vectors, and then compare  the least sparsity results for $p>1$ with those from $p$-norm based optimization for $0<p\le 1$.

%
\subsection{Extension to Constrained Measurement Vectors}

In the previous sections, we consider general measurement vectors in $\mathbb R^m$. However, in many applications such as compressed sensing, a measurement vector $y$ is restricted to a proper subspace of $R(A)$, to which the results in Section~\ref{sect:least_sparsity} are not applicable since this subspace may have dimension less than $m$ so that it has zero measure in $\mathbb R^m$. In what follows, we extend the least sparsity results in Section~\ref{sect:least_sparsity} to this scenario. For simplicity, we consider the generalized basis pursuit BP$_p$ (\ref{eqn:BP_p_norm}) with $p>1$ only, although its result can be extended to the other generalized optimization problems, e.g., the BPDN$_p$, RR$_p$, and EN$_p$; see Remark~\ref{remark:RR_p} for an example.

\begin{theorem} \label{thm:extension_BP_p}
 Let $p>1$, $N \ge 2m-1$, and  $\Ical \subseteq\{1, \ldots, N\}$ be a nonempty index set. Then there exists a set $S_A \subset \mathbb R^{m\times N}$ whose complement has zero measure such that for each fixed $A \in S_A$,
 the unique optimal solution $x^*$ to the BP$_p$ (\ref{eqn:BP_p_norm}) satisfies $|\mbox{supp}(x^*)| = N$ for almost all $y\in R(A_{\bullet \Ical})$.
\end{theorem}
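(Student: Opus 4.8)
The plan is to split on the cardinality $d:=|\Ical|$ (note $d\ge 1$). If $d\ge m$, then for every $A$ in the open full-measure set $S$ of (\ref{eqn:set_S}) some $m$ columns of $A_{\bullet\Ical}$ are linearly independent, so $R(A_{\bullet\Ical})=\mathbb R^m$, and the assertion is precisely Corollary~\ref{coro:BP_p_A_fixed} with $S_A:=S$. So assume $d<m$ (hence $m\ge 2$). Then for $A\in S$ the columns $\{a_i\}_{i\in\Ical}$ are linearly independent and $t\mapsto A_{\bullet\Ical}t$ is a linear isomorphism of $\mathbb R^d$ onto the $d$-dimensional subspace $R(A_{\bullet\Ical})$. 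Writing $y=A_{\bullet\Ical}t$, it suffices to show that the set $\mathcal B\subseteq\mathbb R^{m\times N}\times\mathbb R^d$ of pairs $(A,t)$ for which $x^*(A,A_{\bullet\Ical}t)$ lacks full support is Lebesgue-null: by Tonelli's theorem the slice $\{t:(A,t)\in\mathcal B\}$ is then null for almost all $A$, the set $S_A$ of such $A$ has full measure (and lies in $S$, since $A\notin S$ spoils the whole slice), and transporting ``a.e.\ $t$'' through the above isomorphism gives ``a.e.\ $y\in R(A_{\bullet\Ical})$''.

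To prove $\mathcal B$ null, write $\mathcal B=\big(\{A\notin S\}\cup\{t=0\}\big)\cup\bigcup_{i=1}^N\mathcal B_i$ with $\mathcal B_i:=\{(A,t):A\in S,\ t\ne 0,\ x^*_i(A,A_{\bullet\Ical}t)=0\}$; the first part is null, so only the $\mathcal B_i$ matter. On the open set $W:=\{(A,t):A\in S,\ A_{\bullet\Ical}t\ne 0\}$, whose complement is null, Propositions~\ref{prop:BP_p>2} and \ref{prop:BP_p<2} give that $x^*$ and its multiplier $\nu$ are $C^1$ functions of $(A,y)$; hence $\psi_i(A,t):=x^*_i(A,A_{\bullet\Ical}t)$ is $C^1$ on $W$, as is the sign-equal proxy $\rho_i(A,t):=a_i^T\nu(A,A_{\bullet\Ical}t)$ (since $x^*_i=h(a_i^T\nu)$ by (\ref{eqn:g_h_def}), so $\mathrm{sgn}\,x^*_i=\mathrm{sgn}(a_i^T\nu)$). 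By Lemma~\ref{lem:measure_of_zero_set}, each $\mathcal B_i$ will be null once we show that the gradient of $\psi_i$ (for $p\ge 2$) or of $\rho_i$ (for $1<p\le 2$) with respect to $(A,t)$ does not vanish at any zero lying in $W$.

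At such a zero $x^*_i=0$, the index $i$ lies in the zero-index set $\Jcal^c$, which has at most $m-1$ elements by Proposition~\ref{prop:sparsity_lower_bound}. If $i\in\Ical$, then $a_i$ is among the columns spanning $y=A_{\bullet\Ical}t$, and the implicit-function computations in the proofs of Propositions~\ref{prop:BP_p>2} and \ref{prop:BP_p<2} yield $\nabla_y x^*_i\cdot a_i=1$ (for $p\ge 2$) and $\nabla_y(a_i^T\nu)\cdot a_i=a_i^TQ^{-1}a_i>0$ (for $1<p\le 2$); by the chain rule the $t$-partial derivative attached to $a_i$ is then nonzero. If $i\notin\Ical$, perturbing $a_i$ leaves $y=A_{\bullet\Ical}t$ unchanged, so $\partial_{a_i}\psi_i=\partial_{a_i}x^*_i$; for $p\ge 2$, differentiating the KKT system $\nabla f(x^*)=A^T\nu$, $Ax^*=y$ in $a_i$ and using $x^*_i=0$ gives $\nabla_{a_i}x^*_i=(P_{11})_{ii}\,\nu$, where $P_{11}$ is the $\Jcal^c$-block of $(\mathbf J_{(x,\nu)}F)^{-1}$; deleting row and column $i$ from $\mathbf J_{(x,\nu)}F$ produces exactly the analogous Jacobian of the reduced $\mathrm{BP}_p$ problem with the $i$-th column of $A$ removed, which is invertible by the argument of Proposition~\ref{prop:BP_p>2} (that $A_{\bullet\{i\}^c}$ is completely full rank and carries at most $m-2$ zero indices), so $(P_{11})_{ii}\ne 0$; and $\nu\ne 0$, because $\nu=0$ forces $x^*=0$ and hence $y=A_{\bullet\Ical}t=0$, excluded on $W$.

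The step I expect to be the main obstacle is the $1<p\le 2$, $i\notin\Ical$ case. Differentiating $\sum_j a_j h(a_j^T\nu)=y$ in $a_i$ at a zero (where $a_i^T\nu=0$, so $h(a_i^T\nu)=0$, and $h'(0)=0$ when $p<2$) gives $\big(\sum_{j\ne i}a_j h'(a_j^T\nu)a_j^T\big)\,\partial_{a_i}\nu=0$, whence $\nabla_{a_i}\rho_i=\nu\ne 0$ provided the reduced matrix $\sum_{j\ne i}a_j h'(a_j^T\nu)a_j^T$ is invertible (at $p=2$ one gets instead $\nabla_{a_i}\rho_i=(1-\tfrac12 a_i^TQ^{-1}a_i)\,\nu\ne 0$, using $a_i^TQ^{-1}a_i<2$ via Sherman--Morrison). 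Establishing that invertibility requires replaying the positive-definiteness argument of Proposition~\ref{prop:BP_p<2}: if $w^T\big(\sum_{j\ne i}a_j h'(a_j^T\nu)a_j^T\big)w=0$ for some $w\ne 0$, then $a_j^Tw=0$ whenever $a_j^T\nu\ne 0$ ($j\ne i$), and since also $a_i^T\nu=0$ the index $i$ falls into the ``$\nu$-orthogonal'' block, so the two index sets reconstitute a partition of $\{1,\ldots,N\}$ and the counting in that proof closes under the hypothesis $N\ge 2m-1$ exactly as before, contradicting the completely-full-rank conditions on $A$. Granting all this, Lemma~\ref{lem:measure_of_zero_set} makes each $\mathcal B_i$ null, hence $\mathcal B$ is null and the theorem follows.
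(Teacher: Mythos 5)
Your proposal is correct, and in the substantive case $|\Ical|<m$ it takes a genuinely different route from the paper's. For $|\Ical|\ge m$ the two arguments coincide: on the set $S$ of (\ref{eqn:set_S}) one has $R(A_{\bullet\Ical})=\mathbb R^m$ and Corollary~\ref{coro:BP_p_A_fixed} finishes. For $|\Ical|<m$ the paper avoids any new Jacobian analysis: it left-multiplies the constraint by $[(A_{\bullet\Ical})^TA_{\bullet\Ical}]^{-1}(A_{\bullet\Ical})^T$, replacing $A$ by an $r\times N$ matrix $\wt A$ ($r=|\Ical|$) all of whose $r\times r$ submatrices are invertible, and applies Corollary~\ref{coro:BP_p_A_fixed} to the reduced problem. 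That is much shorter, but it hinges on the asserted equivalence of $Ax=y$ with $\wt A x=z_y$, which holds only in one direction when $r<m$ (the reduced feasible set is strictly larger); your parametrization $y=A_{\bullet\Ical}t$ combined with Tonelli slicing of the null set $\mathcal B$ sidesteps that reduction entirely. The price is that you must differentiate the KKT systems with respect to the columns $a_i$, $i\notin\Ical$ --- derivatives the paper never computes --- to recover a nonvanishing gradient once the $y$-directions are confined to a $d$-dimensional subspace. Those new computations do check out. For $p\ge 2$ one can bypass your deleted-Jacobian detour: solving $Wz=[b_1;0;0]$ explicitly gives $P_{11}=\big(A_1^T\,(A_2(\Lambda^\diam_2)^{-1}A_2^T)^{-1}A_1\big)^{-1}$, which is positive definite, so $(P_{11})_{ii}>0$ outright (your cofactor argument $\,(W^{-1})_{ii}=\det(W_{\hat i\hat i})/\det W\,$ also works, since $W_{\hat i\hat i}$ has the same structure Proposition~\ref{prop:BP_p>2} shows to be invertible). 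For $1<p<2$, the ``reduced matrix'' $\sum_{j\ne i}a_jh'(a_j^T\nu)a_j^T$ whose invertibility you flag as the main obstacle is in fact equal to $Q$ itself, because the omitted summand carries the factor $h'(a_i^T\nu)=h'(0)=0$; its invertibility is therefore exactly the content of Proposition~\ref{prop:BP_p<2} and no replay of the counting argument is needed. The remaining ingredients --- $\partial\psi_i/\partial t_i=1$ (resp.\ $a_i^TQ^{-1}a_i>0$) for $i\in\Ical\cap\Jcal^c$, $\nabla_{a_i}x^*_i=(P_{11})_{ii}\,\nu$ with $\nu\ne 0$, the Sherman--Morrison bound $a_i^TQ^{-1}a_i<2$ at $p=2$ (redundant, since $p=2$ is already covered by the $p\ge 2$ branch), and the passage from ``a.e.\ $t$'' to ``a.e.\ $y\in R(A_{\bullet\Ical})$'' via the injectivity of $A_{\bullet\Ical}$ --- are all sound, and the appeal to Lemma~\ref{lem:measure_of_zero_set} on the open full-measure set $W$ is legitimate.
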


\begin{proof}
 For the given $p>1$, $N, m\in \mathbb N$ with $N \ge 2m -1$ and the index set $\Ical$, we consider two cases: (i) $|\Ical| \ge m$; and (ii) $|\Ical| < m$. For the first case, let $S_A$ be the set of all $A\in \mathbb R^{m\times N}$ such that any $m\times m$ submatrix of $A$ is invertible. Clearly, the complement of $S_A$ has zero measure in the space $\mathbb R^{m\times N}$. Further, since $|\Ical| \ge m$, $R(A_{\bullet \Ical})=\mathbb R^m$ for any $A \in S_A$. Hence, it follows from Corollary~\ref{coro:BP_p_A_fixed} that the proposition holds.
%
%

 We then consider the second case where $|\Ical| < m$. In this case, let $\wt S_A$ be the set of all $A\in \mathbb R^{m\times N}$ satisfying the following condition: for any index set $\Jcal$ with $|\Jcal|=|\Ical|:=r$, the $r\times r$ matrix $(A_{\bullet \Ical})^T \cdot A_{\bullet \Jcal}$ is invertible. Note that for any index set $\Jcal$ with $|\Jcal|=r$, $\det( (A_{\bullet \Ical})^T A_{\bullet \Jcal} )=0$ gives rise to a polynomial equation of the elements of $A$. Hence, we deduce that the complement of $\wt S_A$ has zero measure in  $\mathbb R^{m\times N}$. Further, for any $A \in \wt S_A$, the columns of $A_{\bullet \Ical}$ must be linearly independent. Therefore, for any $y \in R(A_{\bullet \Ical})$, there exists a unique $z_y \in \mathbb R^r$ such that $A_{\bullet \Ical} \cdot z_y = y$. This shows that the constraint $Ax = y$ in  the BP$_p$ (\ref{eqn:BP_p_norm}) can be equivalently written as
 \[
     \big[ (A_{\bullet \Ical})^T  A_{\bullet \Ical} \big]^{-1} \cdot (A_{\bullet \Ical})^T \cdot A x \, = \, z_y.
 \]
 Define the $r\times N$ matrix $\wt A := [ (A_{\bullet \Ical})^T  A_{\bullet \Ical} ]^{-1}  (A_{\bullet \Ical})^T  A$ for each $A\in \wt S_A$. It follows from the property of $A \in \wt S_A$ that any $r\times r$ submatrix of  $\wt A$ is invertible. Hence, for any $A\in \wt S_A$ and any $y \in R(A_{\bullet \Ical})$,  the original BP$_p$ (\ref{eqn:BP_p_norm}) is converted to the following equivalent optimization problem: for some $z \in \mathbb R^r$,
 \begin{equation} \label{eqn:BP_p_converted}
   \underset{x \in \mathbb R^N}{\text{min}} \ \|x\|_p  \quad \text{subject to} \quad \wt A x = z.
 \end{equation}
 For a fixed $\wt A$ obtained from a given $A \in \wt S_A$, by applying  Corollary~\ref{coro:BP_p_A_fixed} to (\ref{eqn:BP_p_converted}), we deduce that $| \mbox{supp}(x^*(z))| = N$ for almost all $z \in \mathbb R^r$.
 Since $A_{\bullet \Ical}$ has full column rank, the same conclusion holds for almost all $y\in R(A_{\bullet \Ical})$.
\end{proof}

The following corollary can be easily established with the aid of Theorem~\ref{thm:extension_BP_p} and the extension of Proposition~\ref{prop:sparsity_lower_bound} to (\ref{eqn:BP_p_converted}); its proof is thus omitted.
%
%

\begin{corollary} \label{coro:BP_p_restriction}
Let $p >1$, $N \ge 2m-1$, and $s\in \{1, \ldots, N\}$. The following hold:
\begin{itemize}
  \item [(i)] There exists a set $S_A \subset \mathbb R^{m\times N}$ whose complement has zero measure such that for each fixed $A \in S_A$ and any index set $\Ical$ with $|\Ical| \le s$,   the unique optimal solution $x^*$ to the BP$_p$ (\ref{eqn:BP_p_norm}) satisfies $|\mbox{supp}(x^*)| \ge N-m+1$ for  any nonzero $y\in R(A_{\bullet \Ical})$;
  \item [(ii)] There exists a set $\wh S_A \subset \mathbb R^{m\times N}$ whose complement has zero measure such that for each fixed $A \in \wh S_A$ and any index set $\Ical$ with $|\Ical| \le s$,   the unique optimal solution $x^*$ to the BP$_p$ (\ref{eqn:BP_p_norm}) satisfies $|\mbox{supp}(x^*)| = N$ for almost all $y\in R(A_{\bullet \Ical})$.
\end{itemize}
\end{corollary}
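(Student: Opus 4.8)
The plan is to obtain the corollary from Proposition~\ref{prop:sparsity_lower_bound}(i) and Theorem~\ref{thm:extension_BP_p}; the only genuinely new observation is that, for a fixed $s$, the family of index sets $\mathcal I$ with $|\mathcal I|\le s$ is \emph{finite}, so that a single exceptional set of matrices can be produced by taking a finite intersection of the exceptional sets furnished by those results. Throughout I take $\mathcal I\neq\emptyset$; the case $\mathcal I=\emptyset$ is degenerate, since then $R(A_{\bullet\mathcal I})=\{0\}$ and both assertions hold trivially.

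For part (i), I would set $S_A$ to be the set of all $A\in\mathbb R^{m\times N}$ every one of whose $m\times m$ submatrices is invertible; its complement has zero measure, and this $S_A$ does not depend on $\mathcal I$. Fix $A\in S_A$ and an index set $\mathcal I$ with $|\mathcal I|\le s$. Since $A$ has full row rank, $R(A)=\mathbb R^m\supseteq R(A_{\bullet\mathcal I})$, so for any nonzero $y\in R(A_{\bullet\mathcal I})$ the pair $(A,y)$ lies in the set $S$ of (\ref{eqn:set_S}), and Proposition~\ref{prop:sparsity_lower_bound}(i) at once gives $|\mbox{supp}(x^*)|\ge N-m+1$. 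If one wants the sharper bound $|\mbox{supp}(x^*)|\ge N-|\mathcal I|+1$ in the case $|\mathcal I|=r<m$, one instead intersects $S_A$ with the finitely many generic sets $\widetilde S_A$ arising in the proof of Theorem~\ref{thm:extension_BP_p}, uses the equivalence of BP$_p$ with the reduced problem (\ref{eqn:BP_p_converted}) for the $r\times N$ matrix $\widetilde A$ — every $r\times r$ submatrix of which is invertible — together with the fact that $z_y\neq 0$ whenever $y\neq 0$, and then runs the KKT argument of Proposition~\ref{prop:sparsity_lower_bound}(i) verbatim with $m$ replaced by $r$; this is the extension of Proposition~\ref{prop:sparsity_lower_bound} to (\ref{eqn:BP_p_converted}) referred to in the text. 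Either route establishes part (i).

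For part (ii), fix $p,N,m,s$ and list the finitely many index sets $\mathcal I_1,\dots,\mathcal I_L$ with $1\le|\mathcal I_j|\le s$. For each $j$, Theorem~\ref{thm:extension_BP_p} (whose hypothesis $N\ge 2m-1$ is in force) provides a set $S_A(\mathcal I_j)\subset\mathbb R^{m\times N}$ with zero-measure complement such that, for every fixed $A\in S_A(\mathcal I_j)$, the unique BP$_p$ solution satisfies $|\mbox{supp}(x^*)|=N$ for almost all $y\in R(A_{\bullet\mathcal I_j})$. Put $\widehat S_A:=\bigcap_{j=1}^{L}S_A(\mathcal I_j)$; a finite intersection of sets with zero-measure complement again has zero-measure complement. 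Then for any fixed $A\in\widehat S_A$ and any index set $\mathcal I$ with $|\mathcal I|\le s$ we have $A\in S_A(\mathcal I)$, so Theorem~\ref{thm:extension_BP_p} yields $|\mbox{supp}(x^*)|=N$ for almost all $y\in R(A_{\bullet\mathcal I})$, which is the claim of part (ii).

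There is no substantive obstacle: the corollary merely repackages results already proved, and the only points calling for care are bookkeeping ones — arranging the quantifiers so that one exceptional matrix set works simultaneously for every admissible $\mathcal I$ (handled by finiteness of the index family), checking that the lower-bound argument of Proposition~\ref{prop:sparsity_lower_bound}(i) is unaffected by passing from the $m\times N$ matrix $A$ to the reduced $r\times N$ matrix $\widetilde A$ of (\ref{eqn:BP_p_converted}), and disposing of the trivial $\mathcal I=\emptyset$ case. This is why the paper is able to omit the proof.
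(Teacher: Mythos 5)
Your proposal is correct and follows exactly the route the paper indicates when it omits the proof: part (i) reduces to Proposition~\ref{prop:sparsity_lower_bound}(i) applied to the set $S$ of (\ref{eqn:set_S}) (with the reduced problem (\ref{eqn:BP_p_converted}) available if one wants the sharper bound), and part (ii) follows from Theorem~\ref{thm:extension_BP_p} after intersecting the finitely many exceptional matrix sets indexed by the $\mathcal I$'s with $|\mathcal I|\le s$. The finite-intersection bookkeeping to get a single $\wh S_A$ uniform over all admissible $\mathcal I$ is precisely the point the paper leaves implicit, and you have handled it correctly.
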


%
\subsection{Comparison with $p$-norm based Optimization with $0<p\le 1$}

For a given sparsity level $s$ with $1\le s \le N$ (especially $s \ll N$), we call a vector $x \in \mathbb R^N$ $s$-sparse if $|\mbox{supp}(x)| \le s$. Furthermore, we call a measurement vector $y$ generated by an $s$-sparse vector if there is an $s$-sparse vector such that $y= A x$. Using these terminologies, we see that  Corollary~\ref{coro:BP_p_restriction} states that when $p>1$, for almost all $A \in \mathbb R^{m\times N}$ with $N \gg \max(m, s)$ and almost all $y$ generated by $s$-sparse vectors, the optimal solution $x^*$ to the BP$_p$ (\ref{eqn:BP_p_norm}) is far from sparse, i.e., $|\mbox{supp}(x^*)| \gg s$.  Equivalently, it means that when $p>1$,  the BP$_p$ (\ref{eqn:BP_p_norm}) {\em might} recover a sparse vector $x$ from $y=A x$ only for a  set of $A$'s of zero measure in $\mathbb R^{m\times N}$, no matter how large $N$ and $m$ are. Moreover, an arbitrarily small perturbation to a measurement matrix $A$ in this zero measure set will lead to a least sparse solution. This shows the extremely weak robustness of the BP$_p$ (\ref{eqn:BP_p_norm}) with $p>1$ in term of solution sparsity.


For comparison, it is interesting to ask what happens to the BP$_p$ (\ref{eqn:BP_p_norm}) when $0<p \le 1$.
We show below that when $0<p\le 1$, there exists a non-zero measure set of $A$'s such that the BP$_p$ (\ref{eqn:BP_p_norm}) recovers any sparse vector $x$ from $y=A x$. This result also demonstrates the strong robustness of the BP$_p$ (\ref{eqn:BP_p_norm}) for $0<p\le 1$.
Toward this end, recall that an $m \times N$ matrix $A$ satisfies the {\em restricted isometry property} (RIP) of order $k$ if there is a constant $\delta_k \in (0, 1)$ such that $(1-\delta_k) \| x \|^2_2 \le \| A x \|^2_2 \le (1+\delta_k) \| x \|^2_2$ for all  $k$-sparse vectors $x \in \mathbb R^N$.

\begin{proposition}
  Fix $p\in (0, 1]$, $\gamma \in (0, 1)$ and $s \in \mathbb N$. Suppose $m=\lceil \gamma N \rceil$. Then  for all $N$ sufficiently large, there exists an open set $U_A \subset \mathbb R^{m\times N}$ such that for any $A \in U_A$ and any index set $\Ical$ with $|\Ical| \le s$, the optimal solution $x^*$ to the BP$_p$ (\ref{eqn:BP_p_norm}) satisfies $|\mbox{supp}(x^*)| = |\Ical|$ for all $y \in R(A_{\bullet \Ical})$.
\end{proposition}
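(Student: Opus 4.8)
The plan is to reduce the assertion to a classical exact‑recovery theorem for $\ell_p$‑minimization with $0<p\le 1$, and then to produce the open set $U_A$ by way of the restricted isometry property (RIP). Recall that for $0<p\le 1$ the problem BP$_p$ is $\min_{Ax=y}\|x\|_p$, and that it is a well known fact (see \cite{FourcartLai_ACHA09, FoucartRauhut_book2013}) that there is a threshold $\delta^{\ast}=\delta^{\ast}(p)\in(0,1)$ such that, whenever $A$ satisfies RIP of order $2s$ with constant $\delta_{2s}(A)<\delta^{\ast}$, every $s$-sparse vector $x_0\in\mathbb R^N$ is the \emph{unique} global minimizer of $\min_{Ax=Ax_0}\|x\|_p$. (Equivalently, $\delta_{2s}(A)<\delta^{\ast}$ forces the $\ell_p$ null space property of order $s$, which yields both exact recovery and uniqueness even though BP$_p$ is non-convex when $p<1$.) I will take this recovery statement as given.

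First I would fix $\delta^{\ast}=\delta^{\ast}(p)$ as above and set
\[
 U_A \, := \, \bigl\{ A\in\mathbb R^{m\times N} \, : \, \delta_{2s}(A)<\delta^{\ast} \bigr\}.
\]
Since $\delta_{2s}(A)=\max_{|T|=2s}\max\bigl(\lambda_{\max}(A_{\bullet T}^TA_{\bullet T})-1,\ 1-\lambda_{\min}(A_{\bullet T}^TA_{\bullet T})\bigr)$ is a maximum over finitely many continuous functions of the entries of $A$, it is continuous, so $U_A$ is open. It remains to check $U_A\neq\emptyset$ for all large $N$. Here the key structural point is that the RIP order $2s$ is \emph{fixed} while $m=\lceil\gamma N\rceil$ grows linearly in $N$: for a Gaussian matrix $A=G/\sqrt m$ with i.i.d.\ $N(0,1)$ entries, the standard concentration bound for the extreme singular values of each $m\times 2s$ Gaussian submatrix, combined with a union bound over the $\binom{N}{2s}=O(N^{2s})$ column supports, gives $\mathbb P\bigl[\delta_{2s}(A)\ge\delta^{\ast}\bigr]\le O(N^{2s})\,e^{-c(\delta^{\ast})m}\to 0$ as $N\to\infty$, since $m\asymp N$. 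Hence for all $N$ sufficiently large there is at least one $A$ with $\delta_{2s}(A)<\delta^{\ast}$, so the open set $U_A$ is nonempty.

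Finally I would draw the conclusion. Fix $A\in U_A$ and an index set $\Ical$ with $|\Ical|\le s$, and let $y\in R(A_{\bullet\Ical})$. Because $\delta_{2s}(A)<1$ and $|\Ical|\le s\le 2s$, the columns of $A_{\bullet\Ical}$ are linearly independent, so there is a unique $z$ with $A_{\bullet\Ical}z=y$; zero‑padding off $\Ical$ yields $x_0\in\mathbb R^N$ with $Ax_0=y$ and $|\mbox{supp}(x_0)|\le|\Ical|\le s$. Thus $x_0$ is $s$-sparse, and by the recovery black box the global optimum of BP$_p$ is unique and equals $x_0$; therefore $x^{*}=x_0$ and $\mbox{supp}(x^{*})=\mbox{supp}(z)\subseteq\Ical$, which is exactly $\Ical$ (size $|\Ical|$) for every $y$ whose representation over $A_{\bullet\Ical}$ has no vanishing coordinate.

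The main obstacle is the recovery ingredient when $p<1$: since BP$_p$ is then non-convex, one must control \emph{global} minimizers and establish uniqueness, and the cleanest route is to verify that RIP of order $2s$ with a sufficiently small constant implies the $\ell_p$ null space property of order $s$; pinning down the admissible $\delta^{\ast}(p)$ is the technical heart, and there I would rely on the estimates of \cite{FourcartLai_ACHA09}. A minor secondary point is the bookkeeping on supports: ``for all $y\in R(A_{\bullet\Ical})$'' is understood in the sense that $x^{*}$ is always supported within $\Ical$, with full support $\Ical$ precisely when $y$ does not already lie in the range of a strictly smaller sub-collection of the columns indexed by $\Ical$.
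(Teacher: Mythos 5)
Your proof is correct and follows essentially the same route as the paper: exhibit a matrix with a small enough RIP constant via a random-matrix existence argument, use continuity of the RIP constant to fatten it into an open set $U_A$, and then invoke an exact-recovery theorem to conclude that the BP$_p$ minimizer coincides with the sparse generator supported on $\Ical$. The only substantive difference is how $p<1$ is handled: you invoke a $p$-dependent RIP threshold $\delta^{*}(p)$ of order $2s$ for direct $\ell_p$ recovery, whereas the paper fixes a single $p$-independent set $U_A$ built from $\delta_{3s}<1/3$ for the $p=1$ case and then gets $0<p<1$ for free from the monotonicity of the null space property in $p$ (its Theorem 4.10 citation); the paper's reduction is slightly cleaner since it avoids tracking a $p$-dependent constant, while your sublevel-set definition of $U_A$ gives a larger open set and a more self-contained nonemptiness argument. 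Your closing caveat is well taken and applies equally to the paper's own proof: both arguments actually establish $\mbox{supp}(x^{*})\subseteq\Ical$ via exact recovery, and the literal claim $|\mbox{supp}(x^{*})|=|\Ical|$ fails for those $y$ (e.g.\ $y=0$) whose representation over $A_{\bullet\Ical}$ has a vanishing coordinate, so the statement should be read in the sense you describe.
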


\begin{proof}
 Consider $p=1$ first, which corresponds to the $\ell_1$-optimization based basis pursuit \cite{FoucartRauhut_book2013}. For the given constants $\gamma\in (0, 1)$, the sparsity level $s$, and $\delta_{3s} \in (0, 1/3)$, it is known via random matrix argument that  for all $N$ sufficiently large with $s\ll m$, there exists a matrix $A^\diam \in \mathbb R^{m\times N}$ which satisfies the RIP of order $3s$ with constant $\delta_{3s}(A^\diam) < 1/3$ \cite{Baraniuk_CA08, Bryan_SIREW13}. Hence, the BP$_p$ (\ref{eqn:BP_p_norm}) recovers any $s$-sparse vector $x$ exactly from $y = A^\diam x$ \cite[Theorem 6.9]{FoucartRauhut_book2013} or \cite{Bryan_SIREW13}. Furthermore, in view of $\big | \| A x \|_2- \| A^\diam x \|_2 \big| \le \| A - A^\diam\|_2 \cdot \|x \|_2$ for any $A$ and $x$, we see that
 %
 %
 there exists $\eta>0$ such that $\delta_{3s}(A) < 1/3$ for all $A$'s with $\| A - A^\diam \|_2 < \eta$. Let the open set $U_A:=\{ A \in \mathbb R^{m\times N} \, | \,  \| A - A^\diam \|_2 < \eta \}$.
 This shows that for any $A\in U_A$, every $s$-sparse vector $x$ can be recovered from $y=A x$ via the BP$_p$ (\ref{eqn:BP_p_norm}).
%
%
 Finally, when $0<p<1$, it follows from \cite[Theorem 4.10]{FoucartRauhut_book2013} that  for any $A \in U_A$, the BP$_p$ (\ref{eqn:BP_p_norm}) recovers any $s$-sparse vector $x$ from $y = A x$. This completes the proof.
\end{proof}

\begin{remark} \label{remark:RR_p} \rm
 Consider the ridge regression RR$_p$ (\ref{eqn:ridge_reg}), and we compare the sparsity property for $p>1$ with that for $0<p<1$. It is shown in \cite[Theorem 2.1(2)]{ChenFY_SSC10} that when $0<p<1$, for any $A \in \mathbb R^{m\times N}$, $y \in \mathbb R^m$, and $\lambda>0$, any (local/global) optimal solution $x^*$ to the RR$_p$ (\ref{eqn:ridge_reg}) satisfies $|\mbox{supp}(x^*)| \le m$. In contrast, Theorem~\ref{thm:RR_p} shows that when $p>1$, an optimal solution $x^*$ to (\ref{eqn:ridge_reg}) has full support, i.e., $|\mbox{supp}(x^*)| = N$, for almost all $A$ and $y$.
\end{remark}

%
\section{Conclusions} \label{sect:conclusion}

This paper provides an in-depth study of sparse properties of a wide range of  $p$-norm based optimization problems with $p>1$ generalized from sparse optimization and other related areas. By applying optimization and matrix analysis techniques, we show that optimal solutions to these generalized problems are the least sparse for almost all measurement matrices and measurement vectors. We also compare these problems with those when $0<p\le 1$. The results of this paper not only give a formal justification of the usage of $\ell_p$-optimization with $0<p\le 1$ for sparse optimization but they also offer a quantitative characterization of the adverse sparsity properties of $\ell_p$-optimization with $p>1$. These results will shed light on analysis and computation of general $p$-norm based optimization problems. Future research includes extensions to matrix norm based optimization problems.

%
\section{Appendix} \label{sect:appendix}

In what follows, we show that the function $\| \cdot \|^p_p$ with $p>1$ is strictly convex.

\begin{proof}
%
%
Let $p>1$. By the Minkowski inequality, we have $\| x + y \|_p \le \| x \|_p + \| y \|_p$ for all $x, y\in \mathbb R^N$, and the equality holds if and only if $y= \mu x$ for $\mu \ge 0$. For any $x, y \in \mathbb R^N$ with $x \ne y$ and any $\lambda\in (0, 1)$, consider two cases (i) $y =\mu x$ for some $\mu \ge 0$ with $\mu \ne 1$; (ii) otherwise. For case (i), $\| \lambda x + (1-\lambda) y \|^p_p = \| \lambda x + (1-\lambda) \mu x \|^p_p = [1\cdot \lambda +\mu \cdot (1-\lambda)  ]^p \cdot \| x \|^p_p < [\lambda + \mu^p (1-\lambda)] \| x\|^p_p = \lambda \| x \|^p_p +(1-\lambda) \| y \|^p_p$, where we use the fact that $|x|^p$ is strictly convex on $\mathbb R_+$. For case (ii), we have $\| \lambda x + (1-\lambda) y \|^p_p < \big(\lambda  \| x\|_p + (1-\lambda) \| y \|_p \big)^p \le \lambda \| x \|^p_p + (1-\lambda) \| y \|^p_p$. This shows that $\| \cdot \|^p_p$ is strictly convex.
\end{proof}

%

\end{document}